\title{Fractional Iteration of Series and Transseries}
\author{G. A. Edgar}
\date{\today}
\theoremstyle{plain}
\newtheorem{pr}{Proposition}
\newtheorem{thm}[pr]{Theorem}
\newtheorem{co}[pr]{Corollary}
\newtheorem{lem}[pr]{Lemma}
\theoremstyle{remark}
\newtheorem{re}[pr]{Remark}
\newtheorem{de}[pr]{Definition}
\newtheorem{no}[pr]{Notation}
\newtheorem{ex}[pr]{Example}
\newtheorem{qu}[pr]{Question}
\numberwithin{pr}{section}
\newcommand{\Def}[1]{\textbf{\itshape #1}} 
\renewcommand{\epsilon}{\varepsilon}
\newcommand{\takes}{\colon}
\newcommand{\fgt}{\succ}
\newcommand{\fst}{\prec}
\newcommand{\fgteq}{\succcurlyeq}
\newcommand{\fsteq}{\preccurlyeq}
\newcommand{\fe}{\asymp}
\newcommand{\fgg}{\succ\!\!\!\succ}
\newcommand{\SET}[2]{ \left\{\, {#1} : {#2} \,\right\} }
\newcommand{\R}{\mathbb R}
\newcommand{\N}{\mathbb N}
\newcommand{\Z}{\mathbb Z}
\newcommand{\G}{\mathfrak G}
\newcommand{\GRID}{\mathfrak J}
\renewcommand{\AA}{\mathfrak A}
\newcommand{\BB}{\mathfrak B}
\newcommand{\DD}{\mathfrak D}
\newcommand{\T}{\mathbb T}
\newcommand{\LP}{\EuScript P}
\newcommand{\SA}{\EuScript A}
\newcommand{\bk}{\mathbf k}
\newcommand{\bm}{\mathbf m}
\newcommand{\0}{\mathbf 0}
\newcommand{\fa}{\mathfrak a}
\newcommand{\fb}{\mathfrak b}
\newcommand{\g}{\mathfrak g}
\newcommand{\m}{\mathfrak m}
\newcommand{\n}{\mathfrak n}
\newcommand{\e}{\mathfrak e}
\newcommand{\p}{\mathfrak p}
\renewcommand{\o}{\mathrm o}
\renewcommand{\O}{\mathrm O}
\newcommand{\Gsmall}{\G^{\mathrm{small}} }
\newcommand{\supp}{\operatorname{supp}}
\renewcommand{\mag}{\operatorname{mag}}
\newcommand{\dom}{\operatorname{dom}}
\newcommand{\expo}{\operatorname{expo}}
\newcommand{\rotomegabar}{{\phantom{\omega}}^{\begin{rotate}{180}%
$\scriptstyle\underline{\omega}$\end{rotate}}}
\newcommand{\acosh}{\operatorname{acosh}}
\newcommand{\bmu}{{\boldsymbol{\mu}}}
\newcommand{\ebmu}{{\boldsymbol{\mu}}}
\newcommand{\ba}{{\boldsymbol{\alpha}}}
\newcommand{\lbb}{\begin{picture}(8,8)(-2,2)
	\put(0,0){\line(0,1){9}}
	\put(2,0){\line(0,1){9}}
	\put(0,0){\line(1,0){5}}
	\put(0,9){\line(1,0){5}}
	\end{picture}}
\newcommand{\rbb}{\begin{picture}(7,8)(0,2)
	\put(3,0){\line(0,1){9}}
	\put(5,0){\line(0,1){9}}
	\put(0,0){\line(1,0){5}}
	\put(0,9){\line(1,0){5}}
	\end{picture}}
\newcommand{\lbbb}{\begin{picture}(10,8)(-2,2)
	\put(0,0){\line(0,1){9}}
	\put(2,0){\line(0,1){9}}
	\put(4,0){\line(0,1){9}}
	\put(0,0){\line(1,0){7}}
	\put(0,9){\line(1,0){7}}
	\end{picture}}
\newcommand{\rbbb}{\begin{picture}(9,8)(0,2)
	\put(3,0){\line(0,1){9}}
	\put(5,0){\line(0,1){9}}
	\put(7,0){\line(0,1){9}}
	\put(0,0){\line(1,0){7}}
	\put(0,9){\line(1,0){7}}
	\end{picture}}
\newcommand{\Bwellproduct}{Prop.~3.27}
\newcommand{\Bgridprod}{Prop.~3.35(d)}
\newcommand{\Baddendumsmall}{Prop.~3.52}
\newcommand{\Bheightwins}{Prop.~3.72}
\newcommand{\BcompoNM}{Prop.~3.111}
\newcommand{\Cwell}{Def.~2.1}
\newcommand{\Cwomonoid}{Prop.~2.2}
\newcommand{\Cwellderiv}{Prop.~2.5}
\newcommand{\Cuii}{Prop.~2.10}
\newcommand{\Cexponentiality}{Prop.~4.5}
\newcommand{\Cklargelogfree}{Prop.~4.8}
\newcommand{\Cinversei}{Prop.~4.19}
\newcommand{\Cinverse}{Prop.~4.20}
\newcommand{\Cmvtiii}{Prop.~4.23}
\newcommand{\Cmvtii}{Prop.~4.24}
\newcommand{\Wheredsubgrid}{Prop.~2.21}
\newcommand{\Wgridderiv}{Rem.~4.6}
\newcommand{\Wcompomono}{Prop.~5.6}
\newcommand{\Winv}{Sec.~8}
\begin{document} 
\maketitle
\setcounter{tocdepth}{2}
\allowdisplaybreaks
\abstract{We investigate compositional iteration of fractional
order for transseries.  For any large positive transseries $T$
of exponentiality $0$,
there is a family $T^{[s]}$ indexed by real numbers $s$
corresponding to iteration of order $s$.
It is based on Abel's Equation.
We also investigate the question of whether there is
a family $T^{[s]}$ all sharing a single support set.
A subset of the transseries of exponentiality $0$ is
divided into three classes (``shallow'', ``moderate'' and ``deep'')
with different properties related to fractional iteration.
}



\section*{Introduction}
Since at least as long ago as 1860
(A.~Cayley \cite{cayley})
there has been discussion of real iteration groups (compositional
iteration of fractional order) for power series.  Or at least
for formal power series, where we do not worry about
convergence of the result.  In this paper we adapt
this to \Def{transseries}.  In many cases it is, in fact, not
difficult to do when we ignore questions of convergence.

We will primarily use the ordered differential field
$\T = \R\lbb\G\rbb = \R\lbbb x \rbbb$ of
(real grid-based) transseries; $\T$ is also
known as \Def{the transline}.  So $\T$ is
the set of all grid-based real formal linear
combinations of monomials from $\G$, while $\G$
is the set of all $e^L$ for $L \in \T$ purely large.
(Because of logarithms, there is no need to write
separately two factors as $x^b e^L$.)
See ``Review'' below.

The problem looks like this:  Let $T$ be a large positive
transseries.  Is there a family $T^{[s]}$ of transseries,
indexed by reals $s$, so that:
$T^{[0]}(x)=x$, $T^{[1]}=T$, and
$T^{[s]}\circ T^{[t]} = T^{[s+t]}$ for all $s,t \in \R$?
These would be called \Def{fractional iterates} of $T$:
$U = T^{[1/2]}$ satisfies $U \circ U = T$; or
$T^{[-1]}$ is the compositional inverse of $T$; etc.
We limit this discussion to \emph{large positive}
transseries since that is where compositions
$S \circ T$ are always defined.
In Corollary~\ref{realiter} we conclude (in
the well-based case) that any
large positive $T$ of exponentiality $0$ admits such
a family of fractional iterates.
However, there are grid-based large positive transseries
$T$ and reals $s$ for which the fractional iterate $T^{[s]}$
is not grid-based (Example~\ref{not_grid}).

We also investigate the existence of a family $T^{[s]}, s \in \R$,
all supported by a single grid (in the grid-based case) or
by a single well-ordered set (in the well-based case).
We show that such a family exists in certain cases
(Theorems \ref{thmsubcrit} and~\ref{thmcrit}) but not
in other cases (Theorem~\ref{thm:deep}).

\subsection*{Review}
The differential field $\T$ of transseries is completely explained in
my recent expository introduction \cite{edgar}.  Other
sources for the definitions are:
\cite{asch}, \cite{costintop}, \cite{DMM}, \cite{hoeven}.
I will generally follow
the notation from \cite{edgar}.
The well-based version of the construction is described in
\cite{DMM} (or \cite[\Cwell]{edgarc}).  In this paper it is intended that
all results hold for both versions, unless otherwise noted.
(I use labels \fbox{G} or \fbox{W} for
statements or proofs valid only for the grid-based
or well-based version, respectively.)
We will write $\T$, $\G$, and so on in both versions.

Write $\LP = \SET{S \in \T}{S \fgt 1, S > 0}$ for the set of
large positive transseries. The operation of
composition $T \circ S$ is defined for $T \in \T$, $S \in \LP$.
The set $\LP$ is a group under composition
(\cite[\S~5.4.1]{hoeven}, \cite[Cor.~6.25]{DMM},
\cite[\Cinverse]{edgarc}, \cite[\Winv]{edgarw}).
Both notations $T\circ S$ and $T(S)$ will be used.

We write $\G$ for the ordered abelian group of transmonomials.
We write $\G_{N,M}$ for the transmonomials with
exponential height $N$ and logarithmic depth $M$.
We write $\G_N$ for the log-free transmonomials with height $N$.
Even in the well-based case, the definition is
restricted so that for any $T \in \T$
there exists $N,M$ with $\supp T \subseteq \G_{N,M}$.
The support $\supp T$ is well ordered for the converse of
the relation $\fst$ in the well-based case;
the support $\supp T$ is a subgrid in the grid-based case.
A ratio set $\bmu$ is a finite subset of $\Gsmall$;
$\GRID^\ebmu$ is the group generated by $\bmu$.
If $\bmu = \{\mu_1,\cdots,\mu_n\}$, then
$\GRID^\ebmu = \SET{\bmu^\bk}{\bk \in \Z^n}$.
If $\bm \in \Z^n$, then
$\GRID^{\ebmu,\bm} = \SET{\bmu^\bk}{\bk \in \Z^n, \bk \ge \bm}$
is a grid.  A grid-based transseries is supported
by some grid.  A subgrid is a subset of a grid.

For transseries $A$, we already use
exponents $A^n$ for multiplicative powers,
and parentheses $A^{(n)}$ for derivatives.  Therefore
let us use square brackets
$A^{[n]}$ for compositional powers.
In particular, we will write $A^{[-1]}$ for the compositional
inverse.  Thus, for example, $\exp_n = \exp^{[n]} = \log^{[-n]}$.

In this paper we will be using some of the results on
composition of transseries from \cite{edgarc} and \cite{edgarw},
where the proofs are sometimes not as simple as in
\cite{edgar} (and not all of them are proved there).

\section{Motivation for Fractional Iteration}
Before we turn to transseries, let us consider
fractional iteration in general.
Given functions $T \takes X \to X$ and
$\Phi \takes \R \times X \to X$, we say that
$\Phi$ is a \Def{real iteration group} for $T$ iff
\begin{align}
	\Phi(s+t,x) &= \Phi\big(s,\Phi(t,x)\big),\label{eq:defn}
	\\
	\qquad \Phi(0,x) &= x,\label{eq:0}
	\\
	\Phi(1,x) &= T(x) ,\label{eq:1}
\end{align}
for all $s,t \in \R$ and $x \in X$.

Let us assume, say, that
$X$ is an interval $(a,b) \subseteq \R$, possibly $a=-\infty$
and/or $b=\infty$,
and that $\Phi$ has as many derivatives
as needed.
If we start with (\ref{eq:defn}), take the partial derivative with
respect to $t$,
$$
	\Phi_1(s+t,x) = \Phi_2\big(s,\Phi(t,x)\big)\,\Phi_1(t,x) ,
$$
then substitute $t=0$, we get
\begin{equation}
	\Phi_1(s,x) = \Phi_2(s,x)\,\Phi_1(0,x) .\label{eq:pde}
\end{equation}
We have written $\Phi_1$ and $\Phi_2$ for the two partial
derivatives of $\Phi$.  Equation (\ref{eq:pde}) is
the one we will be using in Sections \ref{s:3ex}
and~\ref{s:lpt} below.  Here is a proof
showing how it works in the case of functions
on an interval $X$.

\begin{pr}
Suppose $\Phi \takes \R \times X \to X$ satisfies
{\rm(\ref{eq:0})} and
\begin{equation*}
	\Phi_1(s,x) = \Phi_2(s,x)\,\beta(x)
\tag{\ref{eq:pde}${}'$}
\end{equation*}
with $\beta(x) > 0$.  {\rm[}Assume also
$\infty = \int_{x_0}^b dy/\beta(y)$ and
$\infty = \int_a^{x_0} dy/\beta(y)$ for $x_0 \in X = (a,b)$.{\rm]}
Then $\Phi$ satisfies
{\rm (\ref{eq:defn})} with
$\beta(x) = \Phi_1(0,x)$.
\end{pr}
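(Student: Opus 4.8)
The plan is to integrate up the one-dimensional data. Since $\beta>0$ and, by the two integral hypotheses, $\int_a^b dy/\beta(y)$ diverges at \emph{both} ends, the Abel function
$$
	\alpha(x)=\int_{x_0}^{x}\frac{dy}{\beta(y)}
$$
is a strictly increasing, as-smooth-as-needed bijection of $X=(a,b)$ onto all of $\R$, with $\alpha'=1/\beta$. Let $\alpha^{[-1]}\colon\R\to X$ denote its inverse; then $(\alpha^{[-1]})'=\beta\circ\alpha^{[-1]}$. I will show that the hypotheses force $\Phi(s,x)=\alpha^{[-1]}\bigl(\alpha(x)+s\bigr)$, after which (\ref{eq:defn}) is a one-line computation and the identification $\beta=\Phi_1(0,\cdot)$ follows from (\ref{eq:pde}$'$) at $s=0$ together with $\Phi_2(0,\cdot)\equiv 1$ (the latter being $\partial_x$ of (\ref{eq:0})).

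The heart of the matter is a characteristic-curve computation. Fix $y\in X$ and $\sigma\in\R$, and let $\eta$ be the maximal solution of the autonomous ODE $\eta'=-\beta(\eta)$ with $\eta(\sigma)=y$. Along $\eta$ one has $\frac{d}{ds}\alpha(\eta(s))=\alpha'(\eta)\,\eta'=-1$, so $\alpha(\eta(s))=\alpha(y)+\sigma-s$ on the interval of existence. A monotone solution can leave $X$ only by tending to an endpoint, where $\alpha$ is unbounded; since $\alpha(\eta(s))$ stays finite on every bounded $s$-interval, the standard escape-time criterion shows $\eta$ is actually defined on all of $\R$ with values in $X$ — this is the one place the two integral hypotheses are used. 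Now put $w(s)=\Phi(s,\eta(s))$, which is legitimate because $\Phi$ maps $\R\times X$ into $X$. By the chain rule and (\ref{eq:pde}$'$),
$$
	w'(s)=\Phi_1(s,\eta(s))+\Phi_2(s,\eta(s))\,\eta'(s)
	=\Phi_2(s,\eta(s))\bigl(\beta(\eta(s))+\eta'(s)\bigr)=0 ,
$$
so $w$ is constant. Comparing $w(\sigma)=\Phi(\sigma,y)$ with $w(0)=\Phi(0,\eta(0))=\eta(0)$ and using $\alpha(\eta(0))=\alpha(y)+\sigma$, we get $\Phi(\sigma,y)=\alpha^{[-1]}\bigl(\alpha(y)+\sigma\bigr)$ for all $y\in X$ and $\sigma\in\R$.

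With this explicit form in hand, $\Phi(s,\Phi(t,x))=\alpha^{[-1]}\bigl(\alpha(\Phi(t,x))+s\bigr)=\alpha^{[-1]}\bigl(\alpha(x)+t+s\bigr)=\Phi(s+t,x)$, which is (\ref{eq:defn}); and $\Phi_1(0,x)=(\alpha^{[-1]})'(\alpha(x))=\beta(x)$. I expect the only genuine obstacle to be the non-escape of the characteristic $\eta$; everything else is bookkeeping. (Equivalently, one could run the middle step as the uniqueness theorem for the linear transport equation $\psi_s=\beta(x)\,\psi_x$ satisfied by $\psi(s,x)=\alpha(\Phi(s,x))$, whose initial value $\psi(0,\cdot)=\alpha$ matches that of the candidate $\alpha(x)+s$; completeness of the characteristics is exactly what licenses that uniqueness theorem, so the same point does the work either way.)
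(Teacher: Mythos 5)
Your proof is correct and is essentially the paper's own argument in different clothing: the constancy of $w(s)=\Phi(s,\eta(s))$ along the backward characteristic $\eta'=-\beta(\eta)$ is exactly the paper's computation that $F_2=0$ for $F(u,v)=\Phi\big(u+v,\theta(u-v)\big)$ with $\theta'=\beta(\theta)$, and both arguments use the two divergent-integral hypotheses only to guarantee that the characteristic is defined for all time. The only presentational difference is that you make the Abel coordinate $\alpha$ and the formula $\Phi(s,x)=\alpha^{[-1]}\big(\alpha(x)+s\big)$ explicit before reading off the group law, whereas the paper reads the group law directly from the constancy of $F$ (its $\theta$, defined implicitly by $\int_{x_0}^{x}dy/\beta(y)=t$, is your $\alpha^{[-1]}$ based at $x_0$).
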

\begin{proof}
Fix $x_0 \in X$.  Let $\theta(t)$ be the solution of the ODE
$\theta'(t) = \beta(\theta(t))$, $\theta(0) = x_0$.
That is, $\theta(t)= x$ is defined implicitly by
$$
	\int_{x_0}^x \frac{dy}{\beta(y)} = t .
$$
[In order to get all time $t$, we need
$\infty = \int_{x_0}^b dy/\beta(y)$ and
$\infty = \int_a^{x_0} dy/\beta(y)$.]
Consider $F(u,v) = \Phi(u+v,\theta(u-v))$.  Then
\begin{align*}
	F_2(u,v) &= \Phi_1\big(u+v,\theta(u-v)\big)
	- \Phi_2\big(u+v,\theta(u-v)\big)\,\theta'(u-v)
	\\ &
	= \Phi_1\big(u+v,\theta(u-v)\big)
	- \Phi_2\big(u+v,\theta(u-v)\big)\,\beta\big(\theta(u-v)\big)
	= 0
\end{align*}
by (\ref{eq:pde}${}'$).  This means $F$ is independent of $v$.
So
\begin{align*}
	\theta(t) &= \Phi(0,\theta(t)) = F(t/2,-t/2) = F(t/2,t/2)
	\\ &= \Phi(t,\theta(0)) = \Phi(t,x_0),
	\\
	\Phi(s,\Phi(t,x_0)) &= \Phi(s,\theta(t)) = F((s+t)/2,(s-t)/2)
	\\&=F((s+t)/2,(s+t)/2)
	= \Phi(s+t,\theta(0)) = \Phi(s+t,x_0) .
\end{align*}
Differentiate $\Phi(t,x_0)= \theta(t)$ to get
$\Phi_1(t,x_0) = \theta'(t) = \beta(\theta(t))$,
then substitute $t=0$ to get
$\Phi_1(0,x_0) = \beta(\theta(0)) = \beta(x_0)$.
\end{proof}

\section{Three Examples}\label{s:3ex}
\subsection*{Power Series}
We start with the classical case of power series.
(A.~Cayley 1860 \cite{cayley}; A.~Korkine 1882 \cite{korkine}.)
We will think of formal power series (for $x \to \infty$),
and not actual functions.
Consider a series of the form
\begin{align}\label{eq:pseries}
	T(x) &= x\left(1+\sum_{j=1}^\infty c_j x^{-j}\right)
	\\
	&= x + \sum_{j=1}^\infty c_j x^{-j+1}
	= x + c_1 + c_2 x^{-1} + c_3 x^{-2} +\cdots
\notag
\end{align}
Such a series
admits an iteration group of the same form.  That is,
\begin{equation}\label{eq:pseriesa}
	\Phi(s,x) = x\left(1+\sum_{j=1}^\infty \alpha_j(s) x^{-j}\right).
\end{equation}
In fact, $\alpha_j(s)$ is
$s c_j + {}$ \{polynomial in $s, c_1, c_2, \dots, c_{j-1}$
with rational coefficients, of degree $j-1$ in $s$\}.
The first few terms:
\begin{align*}
	\Phi(s,x) &= x + sc_1 + sc_2 x^{-1} +
	\left(sc_3 + \frac{s(1-s)}{2}c_1c_2\right)x^{-2}
	\\ &+
	\left(sc_4+\frac{s(1-s)}{2}(2c_1c_3+c_2^2)
	+\frac{s(1-s)(1-2s)}{6}c_1^2c_2\right)x^{-3} +\cdots .
\end{align*}

\begin{thm}\label{th:powseries}
Let $T(x)$ be the power series
{\rm(\ref{eq:pseries})}.  Define
$\alpha_j \takes \R \to \R$ recursively by
\begin{align*}
	\alpha_1(s) &= s c_1,
	\\
	\alpha_j(s) &= s\left(c_j-\int_0^1 \sum_{j_1+j_2=j}
	(-j_1+1) \alpha_{j_1}(u) \alpha_{j_2}'(0)\,du\right)
	\\&\qquad+ \int_0^s \sum_{j_1+j_2=j}
	(-j_1+1) \alpha_{j_1}(u) \alpha_{j_2}'(0)\,du .
\end{align*}
Then the series $\Phi$ defined formally by {\rm(\ref{eq:pseriesa})}
is a real iteration group for $T$.
\end{thm}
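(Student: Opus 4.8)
The plan is threefold: show that the recursion well-defines each $\alpha_j$ as a polynomial of the stated shape; show that this recursion is equivalent to the differential relation (\ref{eq:pde}${}'$) together with the boundary data $\alpha_j(0)=0$, $\alpha_j(1)=c_j$; and then show, by a formal rerun of Proposition~1.1, that (\ref{eq:pde}${}'$) and $\Phi(0,x)=x$ force the group law (\ref{eq:defn}).

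First I would induct on $j$. Since $j_1+j_2=j$ with $j_1,j_2\ge 1$ forces $j_1,j_2\le j-1$, the sum on the right of the recursion involves only $\alpha_1,\dots,\alpha_{j-1}$ and the already-determined numbers $\alpha_{j_2}'(0)$, so $\alpha_j$ is a well-defined polynomial in $s$; reading the formula off gives $\alpha_j(0)=0$ and $\alpha_j(1)=c_j$ at once (the two integrals cancel at $s=1$), and a routine induction confirms that the part of $\alpha_j$ other than $sc_j$ is a polynomial in $s,c_1,\dots,c_{j-1}$ with rational coefficients, of degree $j-1$ in $s$. Differentiating the recursion in $s$ and putting $s=0$ (where the last sum dies because $\alpha_{j_1}(0)=0$) gives $\alpha_j'(0)=c_j-\int_0^1\sum_{j_1+j_2=j}(-j_1+1)\alpha_{j_1}(u)\alpha_{j_2}'(0)\,du$, and feeding this back shows the recursion is the same as
$$\alpha_j'(s)=\alpha_j'(0)+\sum_{j_1+j_2=j}(-j_1+1)\,\alpha_{j_1}(s)\,\alpha_{j_2}'(0),\qquad \alpha_j(0)=0,\ \alpha_j(1)=c_j\qquad(j\ge 1).$$
Now set $\beta(x):=\Phi_1(0,x)=\sum_{j\ge1}\alpha_j'(0)\,x^{-j+1}$. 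Since $\Phi_2(s,x)=1+\sum_{j\ge2}(-j+1)\alpha_j(s)x^{-j}$, multiplying by $\beta$ and collecting the coefficient of $x^{-m+1}$ produces exactly $\alpha_m'(0)+\sum_{j_1+j_2=m}(-j_1+1)\alpha_{j_1}(s)\alpha_{j_2}'(0)$ (the $j_1=1$ term being harmlessly zero); comparing with $\Phi_1(s,x)=\sum_{m\ge1}\alpha_m'(s)x^{-m+1}$ shows that the displayed system, for all $j$, is precisely (\ref{eq:pde}${}'$), $\Phi_1(s,x)=\Phi_2(s,x)\beta(x)$.

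It then remains to deduce (\ref{eq:defn}), which I would do by rerunning the proof of Proposition~1.1 formally: the positivity and integrability hypotheses there served only to produce the flow $\theta$ for all real $t$, and in the formal setting a formal solution suffices. Let $\theta(t)=x+\sum_{n\ge1}\theta_n(x)\,t^n$ be the unique formal solution (in $t$, with coefficients series in $x^{-1}$) of $\theta'(t)=\beta(\theta(t))$, $\theta(0)=x$; the $\theta_n$ are computed one at a time, and $\theta(t)$ is a large series with leading term $x$, so the composites below are legitimate. Put $F(u,v)=\Phi\big(u+v,\theta(u-v)\big)$. Exactly as in Proposition~1.1, $F_2=\Phi_1(u+v,\theta(u-v))-\Phi_2(u+v,\theta(u-v))\,\theta'(u-v)=0$ by (\ref{eq:pde}${}'$) and $\theta'=\beta\circ\theta$, so $F$ does not depend on $v$; using $\Phi(0,\cdot)=\mathrm{id}$ (i.e.\ $\alpha_j(0)=0$) gives $\theta(t)=F(t/2,-t/2)=F(t/2,t/2)=\Phi(t,x)$, and then
$$\Phi\big(s,\Phi(t,x)\big)=\Phi\big(s,\theta(t)\big)=F\big((s+t)/2,(s-t)/2\big)=F\big((s+t)/2,(s+t)/2\big)=\Phi(s+t,x),$$
which is (\ref{eq:defn}). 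Together with $\Phi(0,x)=x$ and $\Phi(1,x)=T(x)$ from the boundary data, this says $\Phi$ is a real iteration group for $T$.

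I expect the only real work to be bookkeeping: checking that the substitutions $\theta(u-v)^{-j}$, $\Phi(s,\theta(t))$, $\beta(\theta(t))$ are well-formed formal series and that $\partial_s,\partial_t,\partial_u,\partial_v$ pass through them — all fine because each inner series is large with leading term $x$ — and carrying out the coefficient comparison that matches the recursion with (\ref{eq:pde}${}'$). A slicker route to (\ref{eq:defn}) would avoid $\theta$ entirely: reducing mod $x^{-m}$, the composition of series descends to a finite-dimensional unipotent group of truncations, and $s\mapsto\Phi(s,\cdot)\bmod x^{-m}$ is a polynomial curve through the identity whose velocity is the left-invariant field generated by $\beta$ — which is exactly what (\ref{eq:pde}${}'$) asserts — hence a one-parameter subgroup; letting $m\to\infty$ gives (\ref{eq:defn}).
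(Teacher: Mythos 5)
Your proposal is correct and follows the paper's intended route: the paper's proof is simply ``Check (\ref{eq:0}), (\ref{eq:1}), (\ref{eq:pde})'', relying on the mechanism of Proposition 1.1 to pass from (\ref{eq:pde}) and (\ref{eq:0}) to the group law, and your coefficient comparison plus the formal rerun of that proposition (via the formal flow $\theta$ of $\beta=\Phi_1(0,x)$) is exactly that argument carried out in detail. The only difference is the level of explicitness, including your optional truncation-to-unipotent-groups remark, not the method itself.
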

\begin{proof}
Check (\ref{eq:0}), (\ref{eq:1}), (\ref{eq:pde}).
\end{proof}

\begin{re}
The formulas are obtained by plugging (\ref{eq:pseriesa})
into (\ref{eq:pde}), equating coefficients, then integrating
the resulting ODEs.  Consequently, this is the \emph{unique}
solution of the form (\ref{eq:pseriesa}) with differentiable
coefficients.
\end{re}

\begin{re}
Of course there is a corresponding formulation for series
of the form
$$
	T(z) = z \left(1+\sum_{j=1}^\infty c_j z^j\right)
	= z + c_1 z^2 + c_2 z^3 + \cdots .
$$
Then we get $\Phi(s,z) = z\big(1+\sum_{j=1}^\infty \alpha_j(s) z^j\big)$,
with
\begin{align*}
	\alpha_1(s) &= s c_1,
	\\
	\alpha_j(s) &= s\left(c_j - \int_0^1 \sum_{j_1+j_2=j}(j_1+1)
	\alpha_{j_1}(u) \alpha_{j_2}'(0)\,du\right)
	\\ &\qquad+ \int_0^s \sum_{j_1+j_2=j}(j_1+1)
	\alpha_{j_1}(u) \alpha_{j_2}'(0)\,du .
\end{align*}
The first few terms are:
\begin{align*}
	\Phi(s,z) &= z + s c_1 z^2 + \left(s c_2 + s(s-1) c_1^2\right)z^3
	\\ &\qquad+\left(sc_3+\frac{5s(s-1)}{2}c_1c_2+
	\frac{s(s-1)(2s-3)}{2}c_1^3\right)z^4
	+\cdots .
\end{align*}
\end{re}

\subsection*{Convergence}
We considered here \emph{formal} series.  Even if
(\ref{eq:pseries}) converges for all $x$ (except $0$),
it need not follow that (\ref{eq:pseriesa}) converges.
Indeed, one of the criticisms of Cayley \cite{cayley}
and Korkine \cite{korkine} was that convergence
was not proved.  Baker \cite{baker} provides examples
where a power series converges, but none of its non-integer
iterates converges.  Erd\"os \& Jabotinsky \cite{erdos}
investigate the set of $s$ for which the series converges.

\subsection*{Transseries, Height and Depth 0}
Let $B \subseteq (0,\infty)$ be a well ordered set (under the
usual order).  The transseries
\begin{equation}\label{eq:lev0}
	T(x) = x\left(1 + \sum_{b \in B} c_b x^{-b}\right)
	= x + \sum_{b \in B} c_b x^{-b+1}
\end{equation}
is the next one we consider.  In fact, the additive semigroup
generated by $B$ is again well ordered (Higman,
see the proof in \cite[\Cwomonoid]{edgarc}), so we will
assume from the start that $B$ is a semigroup.
If $B$ is finitely generated, then (\ref{eq:lev0})
is a grid-based transseries.  But in general it is well-based.
If $B$ is finitely generated, then $B$ has
order type $\omega$.  But of course
a well ordered $B$ can have arbitrarily large
countable ordinal as order type.

We claim $T$ has a real iteration group
supported by the same $1-B$, where $B$ is a well-ordered additive semigroup
of positive reals.

\begin{pr}\label{sub}
Let $B \subseteq (0,\infty)$ be a semigroup.  The transseries
{\rm(\ref{eq:lev0})} has a real iteration group
$$
	\Phi(s,x) = x\left(1+\sum_{b \in B} \alpha_b(s) x^{-b}\right) .
$$
\end{pr}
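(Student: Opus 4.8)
The plan is to follow the proof of Theorem~\ref{th:powseries} almost verbatim. I posit
\[
  \Phi(s,x)=x\Big(1+\sum_{b\in B}\alpha_b(s)\,x^{-b}\Big)=x+\sum_{b\in B}\alpha_b(s)\,x^{1-b},
\]
substitute into (\ref{eq:pde}), and match coefficients of $x^{1-b}$. Writing $h_b(s):=\sum_{b_1+b_2=b}(1-b_1)\,\alpha_{b_1}(s)\,\alpha_{b_2}'(0)$ (sum over $b_1,b_2\in B$), equation (\ref{eq:pde}) becomes $\alpha_b'(s)=\alpha_b'(0)+h_b(s)$; integrating and imposing $\alpha_b(0)=0$ and $\alpha_b(1)=c_b$ forces
\[
  \alpha_b(s)=\Big(c_b-\int_0^1 h_b(u)\,du\Big)s+\int_0^s h_b(u)\,du ,
\]
which for the least element $b_0$ of $B$ reads $\alpha_{b_0}(s)=c_{b_0}s$. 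Once the $\alpha_b$ are in hand it will remain to verify (\ref{eq:0}), (\ref{eq:1}), (\ref{eq:pde}) and to argue that these force the group law (\ref{eq:defn}), as in Section~1.

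Three points need attention here that did not arise for power series. First, the recursion is well-founded on the well-ordered set $B$: since $\min B>0$, any decomposition $b=b_1+b_2$ with $b_1,b_2\in B$ has $b_1,b_2\le b-\min B<b$, so $h_b$ involves only $\alpha_{b'}$ with $b'<b$, and transfinite induction defines all the $\alpha_b$; each $\alpha_b$ is then a polynomial in $s$. Second, each $h_b$ is a \emph{finite} sum — for any fixed monomial a well-based product admits only finitely many contributing factorizations — which is also what legitimizes the coefficient comparison in (\ref{eq:pde}). Third, and essential for the conclusion to be a statement about $\T$, one has $\supp\Phi(s,\cdot)\subseteq\{1\}\cup(1-B)$; since $B$ is well-ordered, $1-B$ has a largest element on each nonempty subset, so $\Phi(s,\cdot)$ is a genuine well-based transseries of exponential height and logarithmic depth $0$, and if $B$ is generated by $g_1,\dots,g_n$ then this support lies in the single grid $\SET{x^{1-k_1g_1-\dots-k_ng_n}}{\bk\in\Z^n,\ \bk\ge\0}$, so $\Phi(s,\cdot)$ is grid-based. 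Moreover $\Phi(s,x)\fe x\fgt 1$ with positive leading coefficient, so $\Phi(s,\cdot)\in\LP$ and the compositions in (\ref{eq:defn}) are defined. Granting all this, (\ref{eq:0}) and (\ref{eq:1}) are immediate from the displayed formula, and (\ref{eq:pde}) holds because $\alpha_b'(s)=\big(c_b-\int_0^1 h_b\big)+h_b(s)$ while $\alpha_b'(0)=c_b-\int_0^1 h_b$ (using $h_b(0)=0$, as each $\alpha_{b_1}(0)=0$).

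The step needing real care — and the main obstacle — is the last one: deducing the group law (\ref{eq:defn}) from (\ref{eq:0}) and (\ref{eq:pde}). The Proposition in Section~1 that carries out this deduction is stated for maps of a real interval, with positivity and integrability hypotheses, so its characteristic-curve argument must be recast formally inside $\T$. I would do this through Abel's equation. Set $\beta:=\Phi_1(0,\cdot)$; since $\beta\ne 0$ and $\T$ is a field closed under formal integration, the reciprocal $1/\beta$ and an antiderivative $\psi\in\T$ of it exist, and $\psi$ is injective on transseries of the form $x+(\text{small})$ (being strictly monotone, as $\psi'=1/\beta$ has constant sign). Fix $t$ and set $W(t):=\psi(\Phi(t,x))-\psi(x)-t\in\T$, which depends polynomially on $t$ and has $W(0)=0$. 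A short computation — legitimate because (\ref{eq:pde}) is an identity in $\T$, so one may substitute $\Phi(t,x)$ for the free variable — shows $\partial_t W=\beta(x)\,\partial_x W$; since $\beta$ does not involve $t$, this gives $\partial_t^{\,n}W=\beta(x)\,\partial_x\big(\partial_t^{\,n-1}W\big)$, so by induction $\partial_t^{\,n}W\big|_{t=0}=0$ for all $n$, whence $W\equiv 0$. Thus $\psi\circ\Phi(t,\cdot)=\psi+t$ for every $t$, and then
\[
  \psi\big(\Phi(s,\Phi(t,x))\big)=\psi(\Phi(t,x))+s=\psi(x)+t+s=\psi\big(\Phi(s+t,x)\big),
\]
so (\ref{eq:defn}) follows by injectivity of $\psi$. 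This Abel-equation viewpoint is exactly the one underlying the general construction announced in the Introduction.
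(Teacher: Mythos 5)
Your coefficient construction is exactly the paper's: the same finite sums $f_b(s)=\sum_{b_1+b_2=b}(-b_1+1)\alpha_{b_1}(s)\alpha_{b_2}'(0)$ (finiteness by \cite[\Bwellproduct]{edgar}, well-foundedness since $\min B>0$) and the same formula $\alpha_b(s)=s\big(c_b-\int_0^1 f_b\big)+\int_0^s f_b$, after which the paper's proof simply says ``Check (\ref{eq:0}), (\ref{eq:1}), (\ref{eq:pde}).'' Where you genuinely differ is the last step: the paper tacitly appeals to the Proposition of Section~1, an analytic statement about functions on an interval (solving $\theta'=\beta(\theta)$ and using characteristics), which does not literally apply to formal series; you instead deduce the group law (\ref{eq:defn}) inside $\T$ by an Abel-equation argument, setting $\beta=\Phi_1(0,\cdot)$, $\psi=\int dx/\beta$, proving $\psi\circ\Phi(t,\cdot)=\psi+t$ from the vanishing of all $t$-derivatives of $W$ at $t=0$ together with polynomial dependence of its coefficients on $t$, and concluding by injectivity. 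That supplement is sound and anticipates the paper's own later viewpoint (Section~4 and Notation~\ref{defiter}); it buys a self-contained formal justification of a step the paper leaves to the reader. Minor tidying: handle separately the trivial case $T=x$ (then $\beta=0$ and $1/\beta$ does not exist); note that $\Phi(t,x)=x+\o(x)$ but not necessarily $x+\o(1)$ when $B\cap(0,1)\ne\emptyset$, so what you need is injectivity of $S\mapsto\psi\circ S$ on $\LP$, available from $\psi'\ne0$ via \cite[\Cmvtiii]{edgarc} or from invertibility of $\pm\psi$ in $\LP$; and the polynomial-in-$t$ coefficients of $\psi\circ\Phi(t,\cdot)$ and the chain rule for $\partial_t$ through composition deserve a sentence (finitely many contributions per monomial, by the same well-ordering facts).
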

\begin{proof}
The only thing needed is that $B$ is a well ordered semigroup.
It follows that, for any given $b \in B$, there are just
finitely many pairs $(b_1,b_2) \in B \times B$ with
$b_1+b_2 = b$ \cite[\Bwellproduct]{edgar}.  Then define recursively:
\begin{align*}
	f_b(s) &= \sum_{b_1+b_2=b} (-b_1+1) \alpha_{b_1}(s) \alpha'_{b_2}(0),
	\\
	\alpha_b(s) &= s\left(c_b-\int_0^1 f_b(u)\,du\right)
	+ \int_0^s f_b(u)\,du .
\end{align*}
For each $b$, both $f_b(s)$ and
$\alpha_b(s)$ are polynomials (finitely
many terms!) in $s$ and the $c_{b_1}$ [with $b_1 < b$
except for the term $sc_b$].
Check (\ref{eq:0}), (\ref{eq:1}), (\ref{eq:pde}).
\end{proof}

\subsection*{A Moderate Example}
Now we consider another case.
We single it out because it occurs frequently enough to make it useful
to have the formulas displayed.
Consider the transseries
\begin{equation}\label{eq:height1}
	T(x) =
	x\left(\sum_{k=0}^\infty \sum_{j=0}^\infty c_{j,k}x^{-j}
	e^{-kx}\right), \qquad c_{0,0} = 1 .
\end{equation}
The set
\begin{equation}\label{eq:setB}
	B = \SET{(j,k)}{k \ge 0, j \ge 0, (j,k) \ne (0,0)}
\end{equation}
is a semigroup under addition.  The set
$\SET{x^{-j} e^{-kx}}{(j,k) \in B}$ is then a semigroup
under multiplication.  It is well ordered with order
type $\omega^2$ with respect to the converse of $\fgt$.

\begin{thm}\label{critical}
Let $B$ be as in {\rm(\ref{eq:setB})}.  Then the
transseries {\rm(\ref{eq:height1})} admits a real iteration group
supported by the same set $\SET{x^{1-j} e^{-kx}}{j,k \ge 0}$.
\end{thm}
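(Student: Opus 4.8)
The plan is to mimic the proof of Proposition~\ref{sub}, the only change being that the index semigroup $B$ of~\eqref{eq:setB} is no longer a subset of $(0,\infty)$ but a sub-semigroup of $\N^2$, ordered by the converse of the monomial order on $\SET{x^{-j}e^{-kx}}{(j,k)\in B}$. Concretely, I would posit
\[
	\Phi(s,x) = x\left(1+\sum_{(j,k)\in B}\alpha_{j,k}(s)\,x^{-j}e^{-kx}\right),
\]
plug this into the differential functional equation~\eqref{eq:pde}, namely $\Phi_1(s,x)=\Phi_2(s,x)\,\Phi_1(0,x)$, and equate coefficients of the monomials $x^{1-j}e^{-kx}$. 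Because $e^{-kx}$ differentiates to $-k\,e^{-kx}$ while $x^{-j}$ differentiates to $-j\,x^{-j-1}$, the $x$-derivative $\Phi_2$ will now produce \emph{two} kinds of lowered terms, and the recursion for $\alpha_{j,k}$ will involve both a ``$-j+1$'' contribution (as in Proposition~\ref{sub}) and a ``$-k$'' contribution. The key structural fact — the analogue of $\text{\Bwellproduct}$ used in Proposition~\ref{sub} — is that $B$, being a finitely generated sub-semigroup of $\N^2$, has the property that for each $(j,k)\in B$ there are only finitely many decompositions $(j_1,k_1)+(j_2,k_2)=(j,k)$ with both summands in $B\cup\{(0,0)\}$; this guarantees each $f_{j,k}$ and $\alpha_{j,k}$ is a genuine polynomial in $s$ with finitely many terms.

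So the key steps, in order, are: (1) write down the ansatz and compute $\Phi_1$, $\Phi_2$, and $\Phi_1(0,x)=\sum_{(j,k)\in B}\alpha_{j,k}'(0)x^{1-j}e^{-kx}$; (2) extract from~\eqref{eq:pde} the coefficient identity for each $(j,k)$, which takes the shape $\alpha_{j,k}'(s) = f_{j,k}(s) + \alpha_{j,k}'(0)$ where
\[
	f_{j,k}(s) = \sum_{(j_1,k_1)+(j_2,k_2)=(j,k)}\bigl((-j_1+1)\,\mathbf{1}_{k_1=0} - k_1\bigr)\,\alpha_{j_1,k_1}(s)\,\alpha_{j_2,k_2}'(0)
\]
or a bookkeeping variant thereof — I would double-check the exact combinatorial weight by carefully tracking which factor is being differentiated; (3) solve each ODE with initial condition $\alpha_{j,k}(0)=0$ by setting $\alpha_{j,k}(s) = s\bigl(c_{j,k}-\int_0^1 f_{j,k}(u)\,du\bigr) + \int_0^s f_{j,k}(u)\,du$, exactly paralleling Proposition~\ref{sub}, which also builds in $\Phi(1,x)=T(x)$, i.e.~\eqref{eq:1}; (4) observe by induction on the (finite) $\fst$-ordinal rank of $(j,k)$ in $B$ that each $\alpha_{j,k}$ is a polynomial in $s$ and the $c_{j_1,k_1}$ with $(j_1,k_1)\fst(j,k)$ (together with the linear term $sc_{j,k}$), so all the integrals make sense and the recursion is well founded; (5) verify~\eqref{eq:0},~\eqref{eq:1},~\eqref{eq:pde} hold, and invoke Proposition~\ref{eq:pde} (the first Proposition of the paper) to conclude that the constructed $\Phi$ is a genuine real iteration group, noting that $\beta(x)=\Phi_1(0,x)>0$ since $\Phi_1(0,x)=x(1+\text{small})$.

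The main obstacle I anticipate is purely one of bookkeeping in step~(2): getting the coefficient weights right when $e^{-kx}$ and $x^{-j}$ both contribute to $\Phi_2$, and making sure the sum over decompositions $(j_1,k_1)+(j_2,k_2)=(j,k)$ is correctly indexed (in particular whether $(j_2,k_2)=(0,0)$ is allowed, giving the diagonal term that produces the ``$sc_{j,k}$'' part). There is also a minor subtlety worth a sentence: the monomials $x^{1-j}e^{-kx}$ do not all have exponentiality $0$ — those with $k\ge1$ have exponentiality $1$ — so although $T$ itself has exponentiality $0$, strictly this example lies slightly outside the headline hypothesis of Corollary~\ref{realiter}; but nothing in the construction needs exponentiality $0$, only that $B$ is a well-ordered semigroup with the finite-decomposition property, so the proof goes through verbatim. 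Finally, I would remark (as the paper does after Proposition~\ref{sub}) that this solution is the unique one of the given form with differentiable coefficients, since the coefficients are forced term-by-term by~\eqref{eq:pde}.
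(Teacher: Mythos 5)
There is a genuine gap: your proof covers only the case $c_{1,0}=0$ (the paper's case (A)), and misses precisely the phenomenon that makes this theorem the ``moderate example.'' When you extract the coefficient of $x^{1-j}e^{-kx}$ from (\ref{eq:pde}), the factor $\Phi_2(s,x)$ contributes $-k\,\alpha_{j,k}(s)\,x^{1-j}e^{-kx}$ (from differentiating $e^{-kx}$ against the leading $x$), and multiplying this by the term $\alpha'_{1,0}(0)=c_{1,0}$ of $\Phi_1(0,x)$ lands on the \emph{same} monomial $x^{1-j}e^{-kx}$. So for $k>0$ the coefficient identity is not $\alpha'_{j,k}(s)=\alpha'_{j,k}(0)+f_{j,k}(s)$ with $f_{j,k}$ depending only on earlier coefficients, but rather
\begin{equation*}
	\alpha'_{j,k}(s) \;=\; \alpha'_{j,k}(0) + f_{j,k}(s) - k\,c_{1,0}\,\alpha_{j,k}(s).
\end{equation*}
If $c_{1,0}\neq 0$ your recursion is therefore circular as written: your step (3) formula $\alpha_{j,k}(s)=s\bigl(c_{j,k}-\int_0^1 f_{j,k}\bigr)+\int_0^s f_{j,k}$ does not solve this ODE, and your step (4) claim that each $\alpha_{j,k}$ is a polynomial in $s$ is false in general. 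The correct treatment (the paper's case (B)) solves the linear ODE with the integrating factor $e^{kc_{1,0}s}$, taking $F_{j,k}(s)=\int_0^s e^{kc_{1,0}(u-s)}f_{j,k}(u)\,du$ and fitting the boundary conditions $\alpha_{j,k}(0)=0$, $\alpha_{j,k}(1)=c_{j,k}$ via the factor $(1-e^{-sc_{1,0}k})/(1-e^{-c_{1,0}k})$; the resulting coefficients are entire but genuinely non-polynomial. This is not a ``bookkeeping variant'' of the weight: it changes the analytic form of the solution, and it is exactly why $T$ with $c_{1,0}\neq 0$ is moderate rather than shallow in the later classification.

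Even in the case $c_{1,0}=0$ your argument needs one more remark to be well founded: the self-referential term above is multiplied by $\alpha'_{1,0}(0)=c_{1,0}=0$, which is what allows the recursion (ordered by $k$ first, then $j$, as in the paper) to involve only previously defined coefficients. Your appeal to the finite-decomposition property of $B$ does not by itself rule out this diagonal term, because it arises from the shifted index $(j_2,k_2)=(1,0)$ created by the $x$-derivative of $e^{-kx}$, not from the decomposition $(j,k)=(j,k)+(0,0)$. (Your displayed weight with the indicator $\mathbf{1}_{k_1=0}$ on the $(-j_1+1)$ part is also incorrect --- that contribution occurs for all $k_1$ --- though you flagged that you would recheck it.) The side remark about exponentiality of individual monomials is off the mark but harmless: exponentiality is defined for large positive transseries, and $T$ here does have exponentiality $0$.
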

\begin{proof}
Write
\begin{equation*}
	\Phi(s,x) = x\left(1+\sum_{(j,k) \in B}
	\alpha_{j,k}(s) x^{-j} e^{-kx}\right) .
\end{equation*}

(A) We first consider the case with $c_{1,0} = 0$.
The coefficient functions $\alpha_{j,k}$ are defined
recursively as follows.
If $(j,k) \notin B$, then let $\alpha_{j,k}(s) = 0$.
Let $(j,k) \in B$ and assume
$\alpha_{j_1,k_1}(s)$ have already been defined for all
$(j_1,k_1)$ with either $k_1 < k$ or
\{$k_1=k$ and $j_1 < j$\}.  Then let
\begin{equation*}
	f_{j,k}(s) =
	\sum
	\big[(-j_1+1)\alpha'_{j_2,k_2}(0) -k_1 \alpha'_{j_2+1,k_2}(0)\big]
	\alpha_{j_1,k_1}(s) ,
\end{equation*}
where the sum is over all $j_1,j_2,k_1,k_2$ with
$j_1+j_2=j, k_1+k_2=k$.  Check that all the terms in the
sum involve $\alpha$'s that have already been defined (or are
multiplied by zero); this depends on $c_{1,0} = 0$,
so $\alpha_{1,0}(s) = 0$.
Define $F_{j,k}(s) = \int_0^s f_{j,k}(u)\,du$ and
\begin{equation*}
	\alpha_{j,k}(s) =
	\big(c_{j,k} - F_{j,k}(1)\big) s
	+F_{j,k}(s) .
\end{equation*}
Check (\ref{eq:0}), (\ref{eq:1}), (\ref{eq:pde}).

(B) Now consider the case with $c_{1,0} \ne 0$.  Write $c = c_{1,0}$.
The coefficient functions $\alpha_{j,k}$ are defined
recursively as follows.
If $(j,k) \notin B$, then let $\alpha_{j,k}(s) = 0$.
Let $(j,k) \in B$ and assume
$\alpha_{j_1,k_1}(s)$ have already been defined for all
$(j_1,k_1)$ with either $k_1 < k$ or
\{$k_1=k$ and $j_1 < j$\}.  Then let
\begin{equation*}
	f_{j,k}(s) =
	\sum
	\big[(-j_1+1)\alpha'_{j_2,k_2}(0) -k_1 \alpha'_{j_2+1,k_2}(0)\big]
	\alpha_{j_1,k_1}(s) ,
\end{equation*}
where the sum is over all $j_1,j_2,k_1,k_2$ with
$j_1+j_2=j, k_1+k_2=k$.  Omit the terms $\alpha_{jk}'(0)$
and $-kc\alpha_{jk}(s)$ and terms with a factor $0$.
Then all terms
in the sum involve $\alpha$'s already defined.
Define $F_{j,k}(s) = \int_0^s e^{kc(u-s)}f_{j,k}(u)\,du$ and
\begin{equation*}
	\alpha_{j,k}(s) =
	\begin{cases}
	\big(c_{j,0} - F_{j,0}(1)\big) s +F_{j,0}(s) ,& \text{if } k=0,
	\\ \null \\
	\big(c_{j,k} - F_{j,k}(1)\big)
	\frac{\displaystyle 1-e^{-sck}}{\displaystyle 1-e^{-ck}}
	+F_{j,k}(s) ,& \text{if } k>0 .
	\end{cases}
\end{equation*}
Recall $c = c_{1,0}$.
Check (\ref{eq:0}), (\ref{eq:1}), (\ref{eq:pde}).
\end{proof}

In case (B)---the ``moderate case''---the coefficients
$\alpha_{j,k}(s)$ are not necessarily
polynomials in $s$.

\subsection*{A Deep Example}
Another simple example shows that real iteration group
of that type
need not always exist.  Let $B \subseteq \Z^3$ be
\begin{equation}\label{eq:superB}
	B = \SET{(j,0,0)}{j \ge 1} \cup \SET{(j,k,0)}{k \ge 1}
	\cup \SET{(j,k,l)}{l \ge 1} .
\end{equation}
Then $\SET{x^{-j}e^{-kx}e^{-lx^2}}{(j,k,l) \in B}$
is a semigroup, but not well ordered.

I included some negative $j$ and $k$ so that
the set of transseries of the form
$$
	x\left(1+\sum_{(j,k,l) \in B} c_{jkl}
	x^{-j} e^{-kx} e^{-lx^2}\right)
$$
(where of course each individual support is well ordered, not all
of $B$) is closed under composition.

\begin{pr}\label{super}
Let $B'$ be a well ordered subset of {\rm(\ref{eq:superB})}.
The transseries $T(x) = x(1+x^{-1}+e^{-x^2})$ admits no
real iteration group of the form
$$
	\Phi(s,x) = x\left(1+\sum_{(j,k,l) \in B'} \alpha_{jkl}(s)
	x^{-j} e^{-kx} e^{-lx^2}\right) .
$$
\end{pr}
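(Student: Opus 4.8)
The plan is to argue by contradiction. Suppose $\Phi$ is a real iteration group for $T$ of the stated form, with a common well-ordered index set $B'\subseteq B$. Since $\LP$ is a group under composition, the conditions $\Phi(1,\cdot)=T$ and $\Phi(0,\cdot)=\mathrm{id}$ force $\Phi(-n,\cdot)=T^{[-n]}$ for every positive integer $n$ (uniqueness of inverses), so $\supp T^{[-n]}\subseteq\SET{x^{1-j}e^{-kx}e^{-lx^2}}{(j,k,l)\in B'}$ for all $n$. It therefore suffices to exhibit, for each $n$, a monomial in $\supp T^{[-n]}$ whose index, as $n$ varies, cannot lie in one well-ordered set. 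Concretely, I claim $x\,e^{2nx-x^2}\in\supp T^{[-n]}$ for every $n\ge1$, i.e. $(0,-2n,1)\in B'$.

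I would prove this by induction on $n$, tracking the leading monomial of $R_n:=T^{[-n]}(x)-x+n$ and showing $\mag R_n=x\,e^{2nx-x^2}$. For $n=1$: from $T\bigl(T^{[-1]}(x)\bigr)=x$ one gets $T^{[-1]}(x)=x-1-T^{[-1]}(x)\,e^{-T^{[-1]}(x)^2}$, and since $e^{-(x-1)^2}=e^{-1}e^{2x-x^2}$, one computes $T^{[-1]}(x)=x-1-e^{-1}(x-1)e^{2x-x^2}+W$, where every monomial of $W$ is divisible by $e^{-2x^2}$ (all higher corrections feed into that deeper level only). For the step, put $y=T^{[-n]}(x)=x-n+R_n$ and compute $T^{[-(n+1)]}(x)=T^{[-1]}(y)=(y-1)-e^{-1}(y-1)e^{2y-y^2}+W(y)$. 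Here $2y-y^2=-x^2+2(n+1)x-(n^2+2n)+\varepsilon$ with $\varepsilon$ infinitesimal (of magnitude $x^2e^{2nx-x^2}$), so $e^{2y-y^2}=e^{-(n^2+2n)}e^{2(n+1)x-x^2}+(\text{monomials divisible by }e^{-2x^2})$, and the middle summand is $-e^{-(n+1)^2}(x-n-1)e^{2(n+1)x-x^2}+(\text{deeper terms})$, contributing the nonzero coefficient $-e^{-(n+1)^2}$ to $x\,e^{2(n+1)x-x^2}$. The summand $y-1=x-n-1+R_n$ contributes only monomials $\fsteq x\,e^{2nx-x^2}$, and $W(y)$ only monomials divisible by $e^{-2x^2}$; since $x\,e^{2nx-x^2}\fst x\,e^{2(n+1)x-x^2}$ and every $e^{-2x^2}$-monomial is $\fst x^a e^{bx-x^2}$, the leading monomial of $R_{n+1}$ is $x\,e^{2(n+1)x-x^2}$, as claimed.

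The delicate point --- and the main obstacle --- is to make ``(deeper terms)'' precise and to check that nothing, at any stage, cancels the monomial $x\,e^{2nx-x^2}$. This is bookkeeping on the three exponents (the power of $x$, and the coefficients of $x$ and of $x^2$ in the exponential part): a monomial carrying $e^{-lx^2}$ with $l\ge2$ remains at level $\ge2$ under $y\mapsto x-n+R_n$; a monomial $x^a e^{-ky}e^{-y^2}$ with $k\ge1$ turns into $x^a e^{(2n-k)x-x^2}(1+\cdots)$ with $2n-k<2n+2$; and the only level-$1$ monomials of $T^{[-1]}$ are $x\,e^{2x-x^2}$ and $e^{2x-x^2}$, so the largest ``$x$-exponent'' reachable at step $n+1$ is exactly $2(n+1)$, attained only by the term computed above. (Note that well-orderedness of $\supp T^{[-1]}$ is itself what forbids $T^{[-1]}$ from having $e^{2my-y^2}$ for all $m$, which keeps the induction under control.)

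Granting the claim, $(0,-2n,1)\in B'$ for every $n\ge1$. But $x\,e^{2(n+1)x-x^2}/(x\,e^{2nx-x^2})=e^{2x}\fgt1$, so $x\,e^{2x-x^2}\fst x\,e^{4x-x^2}\fst x\,e^{6x-x^2}\fst\cdots$ is an infinite strictly $\fst$-increasing sequence of transmonomials; hence $\SET{x^{1-j}e^{-kx}e^{-lx^2}}{(j,k,l)\in B'}$ is not well ordered for the converse of $\fst$, so it cannot be a support set for transseries. This contradiction proves that $T$ admits no real iteration group of the stated form.
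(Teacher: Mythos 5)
Your argument is correct, but it is not the route of the paper's own proof of Proposition~\ref{super}; it is, in essence, the alternate argument the paper sketches in the Remark immediately following that proof (note $x(1+x^{-1}+e^{-x^2})=x+1+xe^{-x^2}$, the transseries treated there). The paper's proof is a two-line coefficient extraction from the differentiated relation (\ref{eq:pde}): since $\alpha_{100}(s)=s$, and since well-orderedness of $B'$ gives a largest monomial $x^{-j}e^{-kx}e^{-x^2}$ at level $l=1$ in the support, comparing coefficients of $x^{2-j}e^{-kx}e^{-x^2}$ in $\Phi_1(s,x)=\Phi_2(s,x)\Phi_1(0,x)$ yields $0=(-2)\alpha_{jk1}(s)\alpha_{100}'(0)$, so $\alpha_{jk1}=0$, a contradiction. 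You instead use only the group law at integer times: $\Phi(-n,\cdot)=T^{[-n]}$, so the single support set must contain $\supp T^{[-n]}$ for every $n$, and your induction that $x e^{2nx-x^2}\in\supp T^{[-n]}$ (coefficient $-e^{-n^2}$) produces an infinite $\fst$-increasing chain, contradicting well-orderedness. What your route buys: it never differentiates in $s$, so it does not need the coefficient functions $\alpha_{jkl}$ to be differentiable, and it exhibits concretely why no fixed well-ordered set can carry all the iterates; what it costs is the compositional bookkeeping you rightly identify as the delicate step. That bookkeeping does close up: writing $T^{[-1]}(y)=y-1-(y-1)e^{-(y-1)^2}+W(y)$ with every monomial of $W$ carrying $e^{-2y^2}$, the substitution $y=x-n+R_n$ sends level-$\ge2$ monomials to level-$\ge2$ monomials, the middle term contributes exactly $-e^{-(n+1)^2}x e^{2(n+1)x-x^2}$ plus level-$\ge2$ or smaller terms, and neither $R_n$ (all of whose monomials are $\fsteq xe^{2nx-x^2}\fst xe^{2(n+1)x-x^2}$) nor $W(y)$ can occupy that monomial, so no cancellation is possible. (Your clause about monomials $x^ae^{-ky}e^{-y^2}$ with $k\ge1$ is not the relevant case --- the level-one monomials of $T^{[-1]}$ have $k=-2$ --- but the two facts you actually need, stated just before and after it, are the right ones.)
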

\begin{proof}
We may assume $B' = \SET{(j,k,l)}{\alpha_{jkl} \ne 0}$.
As before, the first term beyond $x$ can be computed
as $\alpha_{100}(s) = s \cdot 1 = s$, so $\alpha'_{100}(s) = 1$.
Now $\alpha_{001}(1) \ne 0$, so there is a least
$(j,k,1) \in B'$.  But then by considering the coefficient
of $x^{2-j}e^{-kx}e^{-x^2}$ in $\Phi_1(s,x) = \Phi_2(s,x) \Phi_1(0,x)$ we have
$$
	0 = (-2) \alpha_{jk1}(s)\alpha'_{100}(0),
$$
so $\alpha_{jk1}(s) = 0$, a contradiction.
\end{proof}

\begin{re}
An alternate argument that there is no grid (or
well ordered set) supporting a real iteration group for
$T = x + 1 + xe^{-x^2}$.  Compute
\begin{align*}
	\supp T^{[-1]} &= \{x \fgt 1 \fgt xe^{2x}e^{-x^2} \fgt \cdots\}
	\\
	\supp T^{[-2]} &= \{x \fgt 1 \fgt xe^{4x}e^{-x^2} \fgt \cdots\}
	\\
	\supp T^{[-3]} &= \{x \fgt 1 \fgt xe^{6x}e^{-x^2} \fgt \cdots\}
	\\
	& \dots
	\\
	\supp T^{[-k]} &= \{x \fgt 1 \fgt xe^{2kx}e^{-x^2} \fgt \cdots\},
	\qquad k \in \N, k > 0.
\end{align*}
So there is no grid (and no well ordered set) containing
all of these supports.
\end{re}

\section{The Case of Common Support}\label{s:lpt}
\subsection*{Conjugation}
The set $\LP$ of large positive transseries is a group
under composition (\cite[\Cinverse]{edgarc},
\cite[\Winv]{edgarw}).
In a group, we say $U, V$ are \Def{conjugate}
if there exists $S$ with $S^{[-1]} \circ U\circ S = V$.
Then for all $k \in \N$ it follows that
$S^{[-1]} \circ U^{[k]}\circ  S = V^{[k]}$.
If $\Phi(s,x)$ is an iteration group
for $U(x)$, then
$S^{[-1]}(\Phi(s,S(x)))$ is an iteration group
for the conjugate $S^{[-1]}(U(S(x)))$.  This
can be used to reduce the question of fractional
iteration for certain more general transseries
to more restricted cases to be discussed here.

\subsection*{Puiseux series}
For a Puiseux series of the form
\begin{equation}\label{eq:puiseux}
	T = \sum_{j=m}^\infty c_j x^{-j/k}
\end{equation}
($m \in \Z, k \in \N, c_m>0$), we can conjugate with $x^{1/k}$:
\begin{align*}
	x^{1/k} \circ T \circ x^{k}
	&= \left(T(x^k)\right)^{1/k}
	= \left(\sum_{j=m}^\infty c_j x^{-j}\right)^{1/k}
	\\
	&= c_m^{1/k} x^{-m/k} \left(1+c_{m+1}x^{-1}
	+c_{m+2}x^{-2}+\cdots\right)^{1/k}
	\\
	&= c_m^{1/k} x^{-m/k} \left(1+a_1x^{-1}
	+a_2x^{-2}+\cdots\right) .
\end{align*}
If $\dom T = x$, then also
$\dom(x^{1/k} \circ T \circ x^{k}) = x$ and
existence of a real iteration group is then clear from
Proposition~\ref{eq:pseries}.  (If $\dom T \ne x$,
keep reading.)

\subsection*{Exponentiality}
Associated to a general $T \in \LP$
is an integer $p$ called the \Def{exponentiality} of $T$
\cite[Ex.~4.10]{hoeven} \cite[\Cexponentiality]{edgarc}
such that for all large
enough $k \in \N$ we have
$\log_{k} \circ \;T \circ \exp_{k} \sim \exp_p$.
Write $p = \expo T$.

Now $\expo(S \circ T) = \expo S +\expo T $, so no transseries
with nonzero exponentiality can have a real iteration group of transseries.
There is no transseries $T$ with $T \circ T = e^x$.
(But see \cite{edgartet}.)  The
main question will be for exponentiality zero.  If $\expo T = 0$,
then $T$ is conjugate to some $S = \log_k \circ\;T\circ \exp_k$ such that
$S \sim x$ and such that
$S$ is log-free \cite[\Cklargelogfree]{edgarc}.
So we will deal with this case.

\subsection*{Shallow---Moderate---Deep}
Now we turn to the general large positive log-free transseries
with dominant term $x$.  It admits a unique real iteration group
with a common support in many cases
(shallow and moderate), but not in many other cases (deep).

\begin{de}
Consider log-free $T \sim x$.  A real iteration group
for $T$ with \Def{common support} is a real
iteration group $\Phi(s,x)$ of the form
\begin{equation}\label{eq:phiseries}
	\Phi(s,x) = x\left(1+\sum_{\g \in \BB} \alpha_\g(s) \g\right)
\end{equation}
for some subgrid (\,\fbox{W} or well ordered)
$\BB \subseteq \G$ (not depending on $s$)
where coefficient functions $\alpha_\g \takes \R \to \R$
are differentiable.
\end{de}

Write $T = x(1+U)$,
$U \fst 1$, $U \sim a\e$, $a \in \R$,
$a \ne 0$, $\e \in \G$, $\e \fst 1$.
As before, if there is a real interation group $\Phi$,
it begins $\Phi(s,x) = x(1 + sa\e + \cdots)$.
We may assume:
if $\g \in \BB$, then $\alpha_\g(s) \ne 0$ for some $s$.
So the greatest element of $\BB$ is $\e$.

Write $A^\dagger = A'/A$ for the logarithmic derivative.

\begin{de}\label{de:deep}
Let $T = x(1+U)$, $U \fst 1$,
$\mag U = \e$.  Monomial $\e$ is
called the \Def{first ratio} of $T$.
We say that $T$ is:

\Def{shallow} iff $\g^\dagger \fst 1/(x\e)$ for all
$\g \in \supp U$;

\Def{moderate} iff  $\g^\dagger \fsteq 1/(x\e)$  for all
$\g \in \supp U$
and $\g^\dagger \fe 1/(x\e)$ for at least one $\g \in \supp U$;

\Def{deep} iff $\g^\dagger \fgt 1/(x\e)$ for some
$\g \in \supp U$.

\Def{purely deep} iff $\g^\dagger \fgt 1/(x\e)$ for all
$\g \in \supp U$ except $\e$.
\end{de}

\begin{re}
\fbox{G}
It may be practical to check these definitions
using a ratio set $\bmu = \{\mu_1, \cdots, \mu_n\}$.
For example:
Suppose $\supp U \subseteq \GRID^\ebmu$.  By the group property
(Lemma \ref{lem:group}): if
$\mu_i^\dagger \fst 1/(x\e)$ for $1 \le i \le n$, then
$T$ is shallow.  This will be ``if and only if''
provided $\bmu$ is chosen from the group generated by $\supp U$,
which can always be done.
\end{re}

\begin{re}\label{ecase}
The case of two terms, $T = x(1 + a\e)$ exactly, is shallow.
Indeed, $\e \fst 1$ and so $x\e \fst x$,
$x\e' \fst 1$ and $\e'/\e \fst 1/(x\e)$.
\end{re}

\begin{re}
For small monomials, the logarithmic derivative operation
reverses the order:
if $1 \fgt \fa \fgt \fb$, then $\fa^\dagger \fsteq \fb^\dagger$
(Lemma~\ref{lem:logder}(e)).  And for $U \fst 1$ we have
$U^\dagger \sim (\mag U)^\dagger$ (Lemma~\ref{lem:logder}(c)).
So $T=x(1+ a\e + V)$, $V \fst \e$, is purely deep if and only if
$V^\dagger \fgt 1/(x\e)$.
\end{re}

\begin{re}
The condition $\g^\dagger \fst 1/(x\e)$ says that $\g$ is ``not too small''
in relation to $\e$. (This is the reason for the terms
``shallow'' and ``deep''.)  If $\g \fst 1$ then $\g = e^{-L}$ with
$L>0$ purely large and
\begin{align*}
	\g^\dagger \fst \frac{1}{x\e} & \Longleftrightarrow
	L' \fst \frac{1}{x\e} 
	\\ & \Longleftrightarrow
	L \fst \int\frac{1}{x\e}
	\\ & \Longleftrightarrow
	L < c\int\frac{1}{x\e}\;\text{for all real $c>0$}
	\\ & \Longleftrightarrow
	e^{L} < \exp\left(c\int\frac{1}{x\e}\right)\;\text{for all real $c>0$}
	\\ & \Longleftrightarrow
	\g > \exp\left(-c\int\frac{1}{x\e}\right)\;\text{for all real $c>0$} .
\end{align*}
So the set $\AA = \SET{\g \in \G}{\g \fsteq \e, \g^\dagger \fst 1/(x\e)}$
is an \Def{interval} in $\G$.  The large end
of the interval is the first ratio $\e$, the small end of the interval
is the gap in $\G$ just above all the values
$\exp(-c\int (1/{x\e}))$, $c \in \R$, $c > 0$.  If we
write $\exp\big({-}\overline{0}\int(1/{x\e})\big)$ for that gap, then
$$
	\AA = \big]\;{\textstyle\exp\big({-}\overline{0}\int(1/{x\e})\big)}
	, \e\;\big] .
$$
I will call this the \Def{shallow interval} below $\e$.
Van der Hoeven devotes a chapter \cite[Chap.~9]{hoeven} to gaps (cuts)
in the transline. In his classification \cite[Prop.~9.15]{hoeven}, 
$$
	{\textstyle\exp\big({-}\overline{0}\int(1/{x\e})\big)} =
	e^{\displaystyle -e^{\displaystyle A-e^{\rotomegabar}}},
$$
where $\mag\int(1/(x\e)) = e^A$.

Similarly, the set
$\AA = \SET{\g \in \G}{\g \fsteq \e, \g^\dagger \fsteq 1/(x\e)}$
is an interval in $\G$.  The large end
of the interval is the first ratio $\e$, the small end of the interval
is the gap in $\G$ just below all the values
$\exp\big({-}c\int (1/{x\e})\big)$, $c \in \R$, $c > 0$.  If we
write $\exp\big({-}\overline{\infty}\int(1/{x\e})\big)$ for that gap, then
$$
	\AA = \big]\;{\textstyle\exp\big({-}\overline{\infty}\int(1/{x\e})\big)}
	, \e\;\big] .
$$
I will call this the \Def{moderate interval} below $\e$.
In van der Hoeven's classification, 
$$
	{\textstyle\exp\big({-}\overline{\infty}\int(1/{x\e})\big)} =
	e^{\displaystyle -e^{\displaystyle A+e^{\rotomegabar}}},
$$
where $\mag\int(1/(x\e)) = e^A$.

\end{re}

\subsection*{The Examples}
Let us examine where the examples done above fit in the
shallow/deep classification.  If $\e = x^{-1}$, then
$1/(x\e) = 1$, $\int(1/(x\e)) = x$,
$\exp\big({-}c\int (1/(x\e))\big) = e^{-cx}$.  The small
end of the shallow interval is $\exp(-\overline{0}x)$.
In a power
series, every monomial $x^{-j} \fgt e^{-\overline{0}x}$
is inside the shallow interval,
so a power series is shallow.

In example (\ref{eq:height1}), we saw two cases.  In case
$c_{1,0} \ne 0$, then $\e = x^{-1}$ so again the
small end of the shallow interval is
$\exp(-\overline{0}x)$.  But the monomial
$x^{-j}e^{-kx} \fst \exp(-\overline{0}x)$ if $k > 0$,
and is thus outside the shallow interval,
so this is not shallow.  The small end of the moderate
interval is $\exp(-\overline{\infty}x)$, and all
monomials $x^{-j}e^{-kx} \fgt \exp(-\overline{\infty}x)$
are inside the moderate interval,
so this is the moderate case.

The other case is $c_{1,0} = 0$.  Then $\e$ is $x^{-2}$ (or smaller).
If $\e = x^{-2}$, then $1/(x\e)=x$,
$\exp\big({-}c\int(1/(x\e))\big)= e^{-(c/2)x^2}$.  The small
end of the shallow interval is $\exp(-\overline{0}x^2)$.
All monomials $x^{-j}e^{-kx} \fgt \exp(-\overline{0}x^2)$
are inside the shallow interval,
so this is the shallow case.

Finally consider the example $T = x(1 + x^{-1} + e^{-x^2})$
of Proposition~\ref{super}.  Since $\e = x^{-1}$, the small end
of the moderate interval was computed as $\exp(-\overline{\infty}x)$.
The monomial $e^{-x^2} \fst \exp(-\overline{\infty}x)$ is
outside of that, so $T$ is deep.

\subsection*{Proofs}Proofs will follow the examples done above.
These proofs use some
technical lemmas on logarithmic derivatives, grids,
and well ordered sets;
they are found after the main results, starting
with Lemma~\ref{lem:logder}.

\begin{thm}\label{thmsubcrit}
If (log-free) $T \sim x$ is shallow, then $T$ admits a real iteration
group with common support
where all coefficient functions are polynomials.
\end{thm}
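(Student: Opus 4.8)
The plan is to copy the construction of Proposition~\ref{sub} and of case~(A) of Theorem~\ref{critical}. We seek $\Phi$ of the common-support form~(\ref{eq:phiseries}), $\Phi(s,x)=x\bigl(1+\sum_{\g\in\BB}\alpha_\g(s)\,\g\bigr)$ with $\BB$ to be identified, substitute it into~(\ref{eq:pde}), and read off the coefficients. Since $\Phi_2(s,x)=1+\sum_{\g}\alpha_\g(s)\,\g\,(1+x\g^\dagger)$ and $\beta(x):=\Phi_1(0,x)=x\sum_{\g}\alpha_\g'(0)\,\g$, dividing $\Phi_1=\Phi_2\,\beta$ by $x$ and comparing coefficients of a monomial $\m$ produces an equation
\[
	\alpha_\m'(s)=c_\m+f_\m(s),
\]
in which $c_\m=[\,\m\in\BB\,]\,\alpha_\m'(0)$ is a constant and $f_\m(s)$ is a sum of terms equal to $\alpha_{\g_1}(s)\,\alpha_{\g_2}'(0)$ times the coefficient of $\n$ in $1+x\g_1^\dagger$, the sum over all $\g_1,\g_2\in\BB$ and $\n\in\supp(1+x\g_1^\dagger)$ with $\g_1\g_2\n=\m$. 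Since $\alpha_\m(0)=0$ is forced, the unique differentiable solution meeting the requirement $\alpha_\m(1)=(\text{coefficient of }\m\text{ in }U)$ is $\alpha_\m(s)=\bigl((\text{coeff.\ of }\m\text{ in }U)-\int_0^1 f_\m(u)\,du\bigr)s+\int_0^s f_\m(u)\,du$, exactly as in the examples. So the construction is forced; what must be checked is that the recursion is well founded, that $\BB$ can be taken to be a genuine common support, and that the $\alpha_\m$ come out polynomial.

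The role of the shallow hypothesis is to make this recursion strictly decreasing for $\fgt$ and free of self-reference. Every monomial of $\BB$ will be $\fsteq\e$; and for a monomial $\g$ with $\g^\dagger\fst1/(x\e)$ we have $x\g^\dagger\fst1/\e$, so every $\n\in\supp(1+x\g^\dagger)$ satisfies $\n\fst1/\e$ (the monomial $1$ because $\e\fst1$, the monomials of $\supp(x\g^\dagger)$ because $\g^\dagger\fst1/(x\e)$). Hence in any factorization $\g_1\g_2\n=\m$ contributing to $f_\m$ we get $\m/\g_1=\g_2\n\fst\e\cdot(1/\e)=1$ and likewise $\m/\g_2\fst1$, so $\m\fst\g_1$ and $\m\fst\g_2$. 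In particular $\alpha_\m$ never occurs in its own $f_\m$ (that would require $\n=1/\g_2\fgteq1/\e$, contradicting $\n\fst1/\e$), so $f_\m$ depends only on the $\alpha_{\g_1},\alpha_{\g_2}$ with $\g_1,\g_2\fgt\m$ together with the constants $\alpha_{\g_2}'(0)$, and transfinite recursion on $(\BB,\fgt)$ defines all the coefficient functions. (This is precisely what breaks down in the moderate case, where $\g^\dagger\fe1/(x\e)$ is allowed, the self-reference returns, and one must solve a genuine linear ODE, giving the $1-e^{-sck}$ terms of Theorem~\ref{critical}(B); and in the deep case, where by Proposition~\ref{super} no single support can work at all.)

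It remains to choose $\BB$. Take $\BB$ to be the closure of $\supp U$ under the operation $(\g_1,\g_2,\n)\mapsto\g_1\g_2\n$, $\n\in\supp(1+x\g_1^\dagger)$. Because $T$ is shallow, $\supp U$ lies in the shallow interval $\AA=\SET{\g\in\G}{\g\fsteq\e,\ \g^\dagger\fst1/(x\e)}$ below $\e$; and $\AA$ is stable under that operation, by additivity of $A\mapsto A^\dagger$ and because for $\g\in\AA$ the monomials of $\supp(x\g^\dagger)$, and their products with other monomials of $\AA$, again lie in $\AA$ --- here Lemmas~\ref{lem:logder} and~\ref{lem:group} are used, the content being that $\g^\dagger\fst1/(x\e)$ forces $1/(x\e)$ large enough to dominate the logarithmic derivatives occurring in $\supp(x\g^\dagger)$. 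Thus $\BB\subseteq\AA$, every $\g\in\BB$ satisfies $\g^\dagger\fst1/(x\e)$, and the inequalities of the previous paragraph apply throughout $\BB$. One must still verify that this $\BB$ is a legitimate common support --- a subgrid in case~\fbox{G}, a well ordered subset of $\G$ in case~\fbox{W} --- and that for each $\m$ there are only finitely many factorizations $\g_1\g_2\n=\m$ as above, so that $f_\m$ is a finite sum; these are the expected grid/Higman-type facts about the group generated by a (sub)grid or a well ordered set, applied to $\supp U$ together with the finitely many supports $\supp(x\mu_i^\dagger)$ for a ratio set $\bmu$ of $\supp U$. This bookkeeping --- that the generated support stays inside the shallow interval and is a bona fide common support --- is the main obstacle; everything else is routine.

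Granting it, polynomiality is an immediate induction on $(\BB,\fgt)$: one checks $f_\e=0$, so $\alpha_\e(s)=(\text{coeff.\ of }\e\text{ in }U)\,s$ is linear, and if all $\alpha_{\g_1},\alpha_{\g_2}$ with $\g_1,\g_2\fgt\m$ are polynomials then $f_\m$ --- a finite sum of products of such polynomials with real constants --- and hence $\alpha_\m=(\text{const})\,s+\int_0^s f_\m(u)\,du$ are polynomials. Finally one verifies~(\ref{eq:0}) (because $\alpha_\m(0)=0$),~(\ref{eq:1}) (because $\alpha_\m(1)$ is the coefficient of $\m$ in $U$), and~(\ref{eq:pde}) (true by construction), and passes from~(\ref{eq:0}) and~(\ref{eq:pde}) to the group law~(\ref{eq:defn}) exactly as for Proposition~\ref{sub} and Theorem~\ref{critical}, where checking~(\ref{eq:0}),~(\ref{eq:1}),~(\ref{eq:pde}) was shown to suffice.
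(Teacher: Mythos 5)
Your construction coincides with the paper's: the same closure $\BB$ of $\supp U$ under $(\g_1,\g_2)\mapsto\supp\big((x\g_1)'\g_2\big)$, the same coefficient equation read off from (\ref{eq:pde}), the same use of the shallow hypothesis to force $\m\fst\g_1$ and $\m\fst\g_2$ in every contributing factorization (your uniform treatment of $\n\in\supp(1+x\g_1^\dagger)$ is just a repackaging of the paper's two cases $\g\fsteq\g_1\g_2$ and $\g\fsteq x\g_1'\g_2$), and the same induction giving polynomial coefficients. That part is correct and is exactly the paper's argument.

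The genuine gap is the step you set aside as ``bookkeeping'' to be settled by ``expected grid/Higman-type facts'': that $\BB$ is a subgrid (\,\fbox{G}), respectively well ordered (\,\fbox{W}), and that each $f_\m$ involves only finitely many pairs. The finiteness claim is indeed routine (Lemmas~\ref{lem:finite} and~\ref{lemw:finite}, from the quoted facts on derivatives and products of subgrids/well ordered sets), but the support claim is the substantive content of the theorem and your sketch would not deliver it. In the well-based case there are no ratio sets $\bmu$ at all, and the group (or semigroup) generated by a well ordered set of monomials need not be well ordered once large monomials occur; and large monomials \emph{do} occur among your proposed generators, since for $\g$ in the shallow interval the monomials of $\supp(x\g^\dagger)$ are only constrained to be $\fst 1/\e$ (e.g.\ $\e=\g=e^{-x}$ gives $x\g^\dagger=-x$). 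For the same reason, in the grid-based case the semigroup generated by $\supp U\cup\bigcup_i\supp(x\mu_i^\dagger)$ is not contained in any grid, so one cannot simply invoke grid-generation facts. The paper needs dedicated arguments here: Lemma~\ref{lem:gensubgrid} rescales by $\e$, using the hypothesis $\g^\dagger\fsteq 1/(x\e)$ precisely to make $x\e\mu_i^\dagger\fsteq 1$ and the elements of $\e^{-1}\BB$ at most $1$, and then shows $\widetilde\BB\subseteq\e\GRID^{\ba,\0}$ for a suitable ratio set $\ba$; Lemma~\ref{lemw:gen} replaces this in the well-based case by closing under $\g\mapsto\supp(x\e\g')$ (well-orderedness by a cited result), rescaling by $\e^{-1}$, passing to the generated semigroup (Higman), and using the identity $x(\e\m_1\m_2)'=x(\e\m_1)'\m_2+\m_1\,x(\e\m_2)'-x\e'\m_1\m_2$ to recover closure under the needed operation. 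Without an argument of this kind your $\BB$ has not been shown to be a legitimate common support, so the proof is incomplete at its decisive point, even though you located that point correctly.
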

\begin{proof}
The proof is as in Proposition~\ref{sub} above.  Here
are the details.  Write $T = x(1+U)$, $U \sim a\e$,
$\e \fst 1$.  Begin with the subgrid (\,\fbox{W} well ordered)
$\supp U$ which is contained in
$\SET{\g \in \G}{\g \fgteq \e, \g^\dagger \fst 1/(x\e)}$.
Let $\BB \supseteq \supp U$ be the least set such that
if $\g_1, \g_2 \in \BB$, then
$\supp\big((x\g_1)'\g_2\big) \subseteq \BB$.
By Lemma~\ref{lem:gensubgrid}, $\BB$ is a subgrid
(\,\fbox{W} by Lemma~\ref{lemw:gen}, $\BB$ is well ordered)
and $\BB \subseteq \SET{\g \in \G}{\g \fgteq \e,
\g^\dagger \fst 1/(x\e)}$.

Write 
\begin{align}\label{eq:pdecomp}
	T(x) &= x\left(1+\sum_{\g \in \BB} c_\g\g\right)
	\notag\\
	\Phi(s,x) &= x\left(1+\sum_{\g \in \BB} \alpha_\g(s)\g\right)
	\notag\\
	\Phi_1(s,x) &= x\sum_{\g \in \BB} \alpha_\g'(s)\g
	\\
	\Phi_1(0,x) &= x\sum_{\g \in \BB} \alpha_\g'(0)\g
	\notag\\
	\Phi_2(s,x) &= 1 + \sum_{\g \in \BB} \alpha_\g(s)\,(x\g)'
	\notag\\
	\Phi_2(s,x)\Phi_1(0,x) &= x\sum_{\g \in \BB} \alpha_\g'(0)\g
	+ x\sum_{\g_1,\g_2 \in \BB} \alpha_{\g_1}(s) \alpha_{\g_2}'(0)
	\,(x\g_1)'\,\g_2 .
	\notag
\end{align}

Now fix a monomial $\g \in \BB$.  Assume $\alpha_{\g_1}$ has been
defined for all $\g_1 \fgt \g$.
Consideration of the coefficient of $x\g$
in (\ref{eq:pde}) gives us an equation
\begin{equation}\label{eq:sub}
	\alpha_\g'(s) = \alpha_\g'(0) + f_\g(s),
\end{equation}
where $f_\g(s)$ is a (real) linear combination
of terms $\alpha_{\g_1}(s) \alpha_{\g_2}'(0)$
where $\g_1, \g_2 \in \BB$
satisfy $\g \in \supp\big((x\g_1)'\g_2\big)$.  By
Lemma~\ref{lem:finite} (\,\fbox{W} Lemma~\ref{lemw:finite}), for a given
value of $\g$, there are
only finitely many pairs $\g_1, \g_2$ involved.

Now I claim these all have
$\g_1 \fgt \g$ and $\g_2 \fgt \g$.
Indeed, since $\g \in \supp\big((x\g_1)'\g_2\big)$ and
$(x\g_1)' = \g_1 + x\g_1'$, we have either $\g \fsteq \g_1\g_2$
or $\g \fsteq x\g_1'\g_2$.  Take two cases:
\begin{enumerate}
\item[(a)] $\g \fsteq \g_1\g_2$:  Now $\g_1 \fsteq \e$,
so $\g \fsteq \e\g_2 \fst \g_2$.  And $\g_2 \fsteq \e$,
so $\g \fsteq \e\g_1 \fst \g_1$.
\item[(b)] $\g \fsteq x\g_1'\g_2$:  Now $\g_1' \fst \g_1/(x\e)$
so $\g \fsteq x\g_1'\g_2 \fst \g_1\g_2/\e$.  But
$\g_1 \fsteq \e$, so $\g \fst \g_1$.  And
$\g_2 \fsteq \e$, so $\g \fst \g_2$.
\end{enumerate}
Thus we may use
equations (\ref{eq:sub})
to recursively define $\alpha_\g(s)$.  Indeed,
solving the differential equation, we get
$\alpha_\g(s) =  \int_0^s f_\g(u)\,du + s\alpha_\g'(0) + C$;
but $\alpha_\g(0) = 0$ and $\alpha_\g(1) = c_\g$, so
\begin{equation}\label{eq:subcrecursion}
	\alpha_\g(s) = s\left(c_\g - \int_0^1 f_\g(u)\,du\right)
	+ \int_0^s f_\g(u)\,du .
\end{equation}
In particular, the recursion begins with $\g = \e$ where
$f_\e(s) = 0$ and $\alpha_\e(s) = s c_\e = sa$.
Also (by induction) $\alpha_\g(s)$ and $f_\g(s)$ are
polynomials in $s$.
\end{proof}

\begin{thm}\label{thmcrit}
If (log-free) $T \sim x$ is moderate, then $T$ admits a real iteration
group with common support.
The coefficient functions are entire; we cannot conclude the
coefficient functions are polynomials.
\end{thm}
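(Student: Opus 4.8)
The plan is to follow the structure of Theorem~\ref{thmsubcrit}, replacing the purely algebraic recursion by one that solves a genuine linear ODE at each stage, exactly as case (B) of Theorem~\ref{critical} generalized case (A). First I would write $T = x(1+U)$, $U \sim a\e$, and introduce the subgrid (or well ordered set) $\BB$ generated from $\supp U$ by the closure rule ``$\g_1,\g_2 \in \BB \implies \supp((x\g_1)'\g_2) \subseteq \BB$,'' invoking Lemma~\ref{lem:gensubgrid} (and Lemma~\ref{lemw:gen} in the \fbox{W} case) to see $\BB$ is again a subgrid/well ordered set, and checking via Lemma~\ref{lem:logder} and the moderate hypothesis that $\BB \subseteq \SET{\g}{\g \fgteq \e,\ \g^\dagger \fsteq 1/(x\e)}$, i.e.\ $\BB$ stays inside the moderate interval below $\e$. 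Then I would expand $\Phi(s,x) = x(1+\sum_{\g\in\BB}\alpha_\g(s)\g)$ and substitute into (\ref{eq:pde}) exactly as in display (\ref{eq:pdecomp}), matching coefficients of $x\g$.

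The key new point is the analysis of the resulting equation for a fixed $\g$. As in the shallow proof, the coefficient of $x\g$ in $\Phi_2\Phi_1(0,\cdot)$ produces a finite sum (Lemma~\ref{lem:finite} / Lemma~\ref{lemw:finite}) of terms $\alpha_{\g_1}(s)\alpha_{\g_2}'(0)\,[\text{coeff}]$ with $\g\in\supp((x\g_1)'\g_2)$. The case analysis (a)/(b) of the shallow proof used $\g_1'\fst \g_1/(x\e)$; now only $\g_1'\fsteq \g_1/(x\e)$ holds, so in the borderline subcase we may get $\g = \g_1$ with $\g_2 = \e$. This means the equation for $\alpha_\g$ reads
\begin{equation*}
	\alpha_\g'(s) = -k\, a\,\alpha_\g(s) + \alpha_\g'(0) + f_\g(s),
\end{equation*}
where $k$ is determined by how $\g^\dagger$ compares to $1/(x\e)$ (concretely, $x\g_1'\g_2$ contributes $x\g'\cdot\e \fe \g$ with a real proportionality constant $-k$, read off from $\g^\dagger/(1/(x\e))$), and $f_\g(s)$ is a finite real linear combination of $\alpha_{\g_1}(s)\alpha_{\g_2}'(0)$ with \emph{both} $\g_1 \fgt \g$ and $\g_2 \fgt \g$, hence already defined. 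The recursion still begins at $\g=\e$ with $\alpha_\e(s) = sa$. The remaining work is to solve this linear first-order ODE subject to $\alpha_\g(0)=0$, $\alpha_\g(1)=c_\g$: when $k=0$ we recover (\ref{eq:subcrecursion}); when $k\neq 0$ we set $F_\g(s) = \int_0^s e^{-ka(s-u)}f_\g(u)\,du$ and
\begin{equation*}
	\alpha_\g(s) = \bigl(c_\g - F_\g(1)\bigr)\frac{1-e^{-ska}}{1-e^{-ka}} + F_\g(s),
\end{equation*}
which is entire in $s$ because $1-e^{-ka}\neq 0$ (here one needs $ka \notin 2\pi i\Z\setminus\{0\}$; since $a$ is a nonzero real and $k$ a positive integer this is automatic). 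Finally I would confirm (\ref{eq:0}), (\ref{eq:1}), (\ref{eq:pde}) hold, so $\Phi$ is a real iteration group with common support $\BB$, and note that Example~(\ref{eq:height1}) case (B) already exhibits non-polynomial $\alpha_\g$.

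The main obstacle I expect is the bookkeeping that forces $f_\g$ to involve only strictly larger monomials even in the borderline case, and pinning down exactly which term produces the $-ka\,\alpha_\g(s)$ coefficient: one must separate the ``diagonal'' contribution $\g_1=\g,\g_2=\e$ of $x\g_1'\g_2$ from everything else and verify that every other pair genuinely has $\g_1\fgt\g$ and $\g_2\fgt\g$ (the argument of (a) carries over unchanged; for (b) one gets $\g\fsteq x\g_1'\g_2 \fsteq \g_1\g_2/\e$, and $\g_1,\g_2\fsteq\e$ forces $\g\fsteq\g_1$ and $\g\fsteq\g_2$, with equality $\g=\g_1$ only when $\g_2=\e$ and $\g_1^\dagger\fe 1/(x\e)$). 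Granting that, solvability and entireness of each $\alpha_\g$ is routine ODE theory, and the support statement is exactly as in Theorem~\ref{thmsubcrit}. The claim that polynomials cannot always be achieved is not something to prove here beyond citing the moderate Example~(\ref{eq:height1})(B).
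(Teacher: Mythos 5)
Your proposal follows the paper's proof essentially verbatim: the same closure set $\BB$ built from $\supp U$ via Lemma~\ref{lem:gensubgrid} (resp.\ Lemma~\ref{lemw:gen}), the same coefficient-matching with only the diagonal pair $\g_1=\g$, $\g_2=\e$ escaping the ``strictly larger'' bookkeeping when $\g^\dagger \fe 1/(x\e)$, and the same solution of the resulting linear ODE, your displayed formula being exactly (\ref{eq:crecursion}) with $b=-k$. One small correction: $k$ need not be a positive integer, only a nonzero (positive) real --- e.g.\ $T = x(1+x^{-1}+e^{-\sqrt{2}\,x})$ gives $k=\sqrt{2}$ --- but since $ka$ is then a nonzero real, $1-e^{-ka}\neq 0$ still holds and your argument is unaffected.
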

\begin{proof}
The proof is as in  Theorem~\ref{critical} above.   Here
are the details.  Write $T = x(1+U)$, $U \sim a\e$,
$\e \fst 1$.  Begin with the subgrid (\,\fbox{W} well ordered)
$\supp U$ which is contained in
$\SET{\g \in \G}{\g \fgteq \e, \g^\dagger \fsteq 1/(x\e)}$.
Let $\BB \supseteq \supp U$ be the least set such that
if $\g_1, \g_2 \in \BB$, then
$\supp\big((x\g_1)'\g_2\big) \subseteq \BB$.
By Lemma~\ref{lem:gensubgrid}, $\BB$ is a subgrid
(\,\fbox{W} by Lemma~\ref{lemw:gen}, $\BB$ is well ordered)
and $\BB \subseteq \SET{\g \in \G}{\g \fgteq \e,
\g^\dagger \fsteq 1/(x\e)}$.

Write $T(x) = x(1+\sum_{\g \in \BB} c_\g\g)$
and $\Phi(s,x) = x(1+\sum_{\g \in \BB} \alpha_\g(s)\g)$.
Compute the derivatives as in (\ref{eq:pdecomp}).

Now fix a monomial $\g \in \BB$.  Assume $\alpha_{\g_1}$
has been defined for all $\g_1 \fgt \g$.  There are two cases.
If $\g^\dagger \fst 1/(x\e)$, then the argument proceeds
as before, and we get (\ref{eq:subcrecursion}).
Now assume $\g^\dagger \fe 1/(x\e)$, say
$\g^\dagger \sim b/(x\e)$, $b \in \R$, $b \ne 0$.
Consideration of the coefficient of $x\g$
in (\ref{eq:pde}) gives us
an equation
\begin{equation}\label{eq:crit}
	\alpha_\g'(s) = \alpha_\g'(0) + f_\g(s) +
	b\alpha_\g(s) \alpha_\e'(0),
\end{equation}
where $f_\g(s)$ is a (real) linear combination
of terms $\alpha_{\g_1}(s) \alpha_{\g_2}'(0)$
where $\g_1, \g_2 \in \BB$
satisfy $\g \in \supp\big((x\g_1)'\g_2\big)$.
The term with $\g_1 = \g$, $\g_2 = \e$, namely
$b\alpha_\g(s) \alpha_\e'(0)$, is not included in $f_\g$
but written separately.
By Lemma~\ref{lem:finite} (\,\fbox{W} Lemma ~\ref{lemw:finite}), for a given
value of $\g$, there are
only finitely many pairs $\g_1, \g_2$ involved.

But I claim these all have
$\g_1 \fgt \g$ and $\g_2 \fgt \g$ except for
the case $\g_1=\g, \g_2=\e$.
Indeed, since $\g \in \supp\big((x\g_1)'\g_2\big)$ and
$(x\g_1)' = \g_1 + x\g_1'$, we have either $\g \fsteq \g_1\g_2$
or $\g \fsteq x\g_1'\g_2$. Take two cases:
\begin{enumerate}
\item[(a)]  $\g \fsteq \g_1\g_2$:  Now $\g_1 \fsteq \e$,
so $\g \fsteq \e\g_2 \fst \g_2$.  And $\g_2 \fsteq \e$,
so $\g \fsteq \e\g_1 \fst \g_1$.
\item[(b)] $\g \fsteq x\g_1'\g_2$:  Now $\g_1' \fsteq \g_1/(x\e)$
so $\g \fsteq x\g_1'\g_2 \fsteq \g_1\g_2/\e$, with $\fe$
only if $\g \fe x\g_1'\g_2$ and $\g_1^\dagger = 1/(x\e)$.  But
$\g_1 \fsteq \e$, so $\g \fsteq \g_1$, and $\fe$ would mean
$\g_1=\e$ and $\g_1^\dagger = 1/(x\e)$ so
$\e^\dagger \fe 1/(x\e)$, which is false; thus $\g \fst \g_1$.
And $\g_2 \fsteq \e$, so $\g \fsteq \g_2$.  This time $\fe$
means $\g_2 = \e$, $\g_1 = \g$, $\g^\dagger \fe 1/(x\e)$.
\end{enumerate}
Thus we may use
equations (\ref{eq:crit}) to recursively define $\alpha_\g(s)$.  Indeed,
solving the differential equation, we get
\begin{equation}\label{eq:crecursion}
	\alpha_\g(s) = \frac{e^{abs}-1}{e^{ab}-1}
	\left(c_\g - \int_0^1 e^{ab(1-u)}f_\g(u)\,du\right)
	+\int_0^s e^{ab(s-u)}f_\g(u)\,du
\end{equation}
if $\g^\dagger \sim b/(x\e)$.
\end{proof}

\begin{thm}\label{thm:deep}
If (log-free) $T \sim x$ is deep, then it does not admit a real iteration group
of the form {\rm(\ref{eq:phiseries})}.
\end{thm}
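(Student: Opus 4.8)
The plan is to argue by contradiction, following the proof of Proposition~\ref{super}. Suppose $\Phi(s,x)=x\bigl(1+\sum_{\g\in\BB}\alpha_\g(s)\g\bigr)$ were a real iteration group for $T$ of the form {\rm(\ref{eq:phiseries})}, with $\BB$ a subgrid (\,\fbox{W} well ordered set); as in the paragraph preceding Definition~\ref{de:deep} we may assume $\alpha_\g\not\equiv0$ for all $\g\in\BB$, so that $\e=\max\BB$ and $\Phi$ begins $x(1+sa\e+\cdots)$ with $a\ne0$, i.e.\ $\alpha_\e'(0)=a$. Differentiating term by term, $\Phi_1(s,x)=x\sum_{\g\in\BB}\alpha_\g'(s)\g$ has support in $\{\,x\g:\g\in\BB\,\}$, and by {\rm(\ref{eq:pde})} so does $\Phi_2(s,x)\Phi_1(0,x)=\Phi_1(s,x)$. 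So it suffices to exhibit a single monomial $x\g^{\ast}$ with $\g^{\ast}\notin\BB$ whose coefficient in $\Phi_2(s,x)\Phi_1(0,x)$ is not identically zero in $s$.

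Here is how I would choose $\g^{\ast}$. Since $T$ is deep and $\supp U\subseteq\BB$, the set $\DD=\{\,\g\in\BB:\g^\dagger\fgt 1/(x\e)\,\}$ is nonempty; let $\g_0$ be its $\fgt$-greatest element and put $\m=\mag\g_0^\dagger$. Deepness gives $\m\fgt 1/(x\e)\fgt 1/x$, so $x\g_0'=x\g_0\g_0^\dagger$ has magnitude $x\g_0\m\fgt\g_0$; hence the leading term of $(x\g_0)'=\g_0+x\g_0'$ is $b'\eta$, where $\eta:=x\g_0\m$ and $b'\ne0$ is the leading coefficient of $\g_0^\dagger$. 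Set $\g^{\ast}:=\eta\e$. First, $\g^{\ast}$ is deep: $(\g^{\ast})^\dagger=1/x+\g_0^\dagger+\m^\dagger+\e^\dagger$, and here $\g_0^\dagger$ strictly dominates the other three summands --- $1/x\fst 1/(x\e)$, $\e^\dagger\fst 1/(x\e)$ by Remark~\ref{ecase}, and $\m^\dagger\fst\g_0^\dagger$ by the logarithmic-derivative lemmas (order-reversal of $\dagger$ on small monomials, and $\m^\dagger\fst\m$ for large $\m$) --- so $(\g^{\ast})^\dagger\fe\g_0^\dagger\fgt 1/(x\e)$. Second, $\g^{\ast}\fgt\g_0$, since $\g^{\ast}/\g_0=x\e\m\fgt1$. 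Since $\g_0$ is the $\fgt$-greatest deep monomial in $\BB$, these two facts force $\g^{\ast}\notin\BB$.

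Expanding $\Phi_2\Phi_1(0,x)$ as in {\rm(\ref{eq:pdecomp})}, the coefficient of $x\g^{\ast}$ from the linear part $x\sum_\g\alpha_\g'(0)\g$ is $0$ (as $\g^{\ast}\notin\BB$), while in the double sum $x\sum_{\g_1,\g_2}\alpha_{\g_1}(s)\alpha_{\g_2}'(0)(x\g_1)'\g_2$ the pair $(\g_1,\g_2)=(\g_0,\e)$ contributes $a\,b'\,\alpha_{\g_0}(s)$, because $\g^{\ast}=\eta\e$ occurs in $(x\g_0)'\e$ with coefficient $b'$, $\alpha_\e'(0)=a\ne0$, and $\alpha_{\g_0}\not\equiv0$. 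The main obstacle --- and, as in Proposition~\ref{super}, the delicate part of the argument --- is to verify that no other pair $(\g_1,\g_2)$ contributes to the coefficient of $x\g^{\ast}$: then that coefficient is exactly $a\,b'\,\alpha_{\g_0}(s)\not\equiv0$, contradicting its vanishing. This is a case check on the monomials of $(x\g_1)'\g_2=\g_1\g_2+x\g_1'\g_2$, with $\g_1,\g_2\fsteq\e$: (i) $\g^{\ast}=\g_1\g_2$ is impossible, since a product of non-deep monomials is non-deep while $\g^{\ast}$ is deep, and if one factor is deep it is $\fsteq\g_0$, giving $\g^{\ast}=\g_1\g_2\fsteq\g_0\e\fst\g_0\fst\g^{\ast}$; (ii) if $\g^{\ast}=x\g_1\g_2\n$ with $\n\in\supp\g_1^\dagger$, then (using $\n^\dagger\fsteq\g_1^\dagger$) some factor is deep, $\g_2$ deep is excluded because then $\g_1\n\fsteq\e\cdot 1/(x\e)=1/x$ and so $\g^{\ast}=x\g_1\g_2\n\fsteq\g_2\fsteq\g_0\fst\g^{\ast}$, and $\g_1$ deep forces $\g_1=\g_0$ --- if $\g_1\fst\g_0$ then $\n\fgteq\g_0\m/\g_1$ together with $\n\fsteq\mag\g_1^\dagger$ gives $\mag\g_0'\fsteq\mag\g_1'$, against $\mag\g_1'\fst\mag\g_0'$ (positivity of the derivative on monomials) --- after which $\g_2\n=\m\e$, $\g_2\fsteq\e$, $\n\fsteq\m$ force $(\g_2,\n)=(\e,\m)$. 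Carrying out (ii) carefully is the crux; once done, the contradiction is complete. (The computation of $\supp T^{[-k]}$ in the Remark following Proposition~\ref{super} is a soft version of this spreading.)
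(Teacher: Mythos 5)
Your proposal is essentially the paper's own argument: you take the $\fgt$-greatest deep element $\g_0$ of $\BB$ (the paper's $\m=\max\BB_2$), examine the coefficient of $x\g^{\ast}$ with $\g^{\ast}=\mag\big((x\g_0)'\e\big)$ (the paper's $x\p\m$), show $\g^{\ast}\notin\BB$ because it is deep and $\fgt\g_0$, argue that $(\g_0,\e)$ is the unique pair contributing to that coefficient, and conclude $\alpha_{\g_0}\equiv 0$, a contradiction. The skeleton is right and matches the paper case for case; two of your justifications, however, do not cover all configurations as written. For the deepness of $\g^{\ast}$ you invoke ``$\m^\dagger\fst\m$ for large $\m$'', but your $\m=\mag\g_0^\dagger$ need not be large (take $\e=x^{-1/2}$, $\g_0=e^{-x^{3/4}}$, so $\m=x^{-1/4}$); moreover order-reversal gives only a non-strict comparison, and without strict domination one must still rule out cancellation of leading terms in $1/x+\g_0^\dagger+\m^\dagger+\e^\dagger$. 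The clean route is Lemma~\ref{lem:logder}(g), exactly as the paper uses it: $\eta=\mag\big((x\g_0)'\big)$ satisfies $\eta^\dagger\sim\g_0^\dagger$, hence $(\g^{\ast})^\dagger=\eta^\dagger+\e^\dagger\sim\g_0^\dagger\fgt 1/(x\e)$.

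The more substantive gap is in your case (ii), in the exclusion of ``$\g_2$ deep''. The bound $\g_1\n\fsteq\e\cdot 1/(x\e)$ uses $\n\fsteq 1/(x\e)$, i.e.\ $\mag\g_1^\dagger\fsteq 1/(x\e)$, which holds only when $\g_1$ is moderate; when $\g_1$ and $\g_2$ are both deep this fails (with $\e=x^{-1}$ and $\g_1=e^{-x^2}$ one has $\n=x\fgt 1=1/(x\e)$), so the displayed chain collapses. The conclusion is still recoverable: by ``height wins'' (this is precisely the content of the proof of Lemma~\ref{lem:logder}(g)) every monomial of $(x\g_1)'$ is $\fst 1$, so $x\g_1\n\g_2\fst\g_2\fsteq\g_0\fst\g^{\ast}$; the paper's case (e) disposes of all $\g_2\fsteq\g_0$ at one stroke in this spirit. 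Your subcase ``$\g_1$ deep, $\g_1\fst\g_0$'' is fine granted strict monotonicity of differentiation on monomials (the strictness remark inside the proof of Lemma~\ref{lem:logder}(g) is what licenses it). So: same approach as the paper, correct in outline, but the step you yourself flag as the crux needs the height-wins lemma to be watertight.
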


\begin{proof}
The proof is as in Proposition~\ref{super} above.
Here are the details.  Write $T = x(1+U)$, $U \sim a\e$,
$\e \fst 1$.  Suppose
$$
	\Phi(s,x) = x \left(1 + \sum_{\g\in \BB} \alpha_\g(s)\g\right)
$$
satisfies (\ref{eq:pde}), (\ref{eq:0}),
$\Phi(1,x) = T(x)$, and $\BB$ is a subgrid
(\,\fbox{W} well ordered).  We may assume
$\alpha_\g \ne 0$ for all $\g \in \BB$.
Write
$$
	\BB_1 = \SET{\g \in \BB}{\g^\dagger \fsteq 1/(x\e)},
	\qquad
	\BB_2 = \SET{\g \in \BB}{\g^\dagger \fgt 1/(x\e)}
$$
(the moderate and deep portions of $\BB$).
As before, $\max \BB = \e$ and $\alpha_\e(s) = as$,
$\alpha_\e'(0)=a$, $a \ne 0$.

Let $\m = \max\BB_2$.  So $\m^\dagger \fgt 1/(x\e)$.
Write $\m^\dagger \sim -b \p / (x\e)$, $b \in \R$, $b > 0$,
$\p \in \G$, $\p \fgt 1$.  ($\m$ is small and positive, so
$\m'$ is negative.)
In (\ref{eq:pde}) we will consider the coefficient of
$x\p\m = x\mag\big((x\m)'\e\big)$.
By Lemma~\ref{lem:logder}(g),
$((x\m)')^\dagger \sim \m^\dagger \fgt 1/(x\e)$;
by Remark~\ref{ecase},
$\e^\dagger \fst 1/(x\e)$; so
$(\p\m)^\dagger = ((x\m)')^\dagger + \e^\dagger
\sim ((x\m)')^\dagger \fgt 1/(x\e)$
and $\p\m \notin\BB_1$.
Also $\p\m \fgt \m$, so $\p\m \notin\BB_2$.  Thus
$\p\m \notin \BB$ so $\alpha_{\p\m} = 0$.

I claim the only $\g_1,\g_2 \in \BB$ with
$\p\m \in \supp\big((x\g_1)'\g_2\big)$ are $\g_1=\m$, $\g_2 = \e$.
To prove this, consider these cases:
\begin{enumerate}
\item[(a)] $\g_1 \fgt \m, \g_2 \fgt \m$.  By Lemma~\ref{lem:group}(b),
$\supp((x\g_1)') \subseteq \BB_1$ and by Lemma~\ref{lem:group}(a),
$\supp((x\g_1)'\g_2) \subseteq \BB_1$.  So
$\p\m \notin \supp((x\g_1)'\g_2)$.
\item[(b)] $\g_1 = \m, \g_2=\e$. In this case
$\p\m = \mag((x\g_1)'\g_2)$, so $\p\m \in \supp((x\g_1)'\g_2)$. 
\item[(c)] $\g_1 = \m, \g_2 \fst \e$.  Then
$(x\g_1)'\g_2 \fst (x\m)'\e \fe \p\m$,
so $\p\m \notin \supp((x\g_1)'\g_2)$.\item[(d)] $\g_1 \fst \m$.  Then $(x\g_1)'\g_2 \fst (x\m)'\e \fe \p\m$,
so $\p\m \notin \supp((x\g_1)'\g_2)$.
\item[(e)] $\g_2 \fsteq \m$.  Then by Remark~\ref{ecase},
$(x\g_1)'\g_2 \fsteq (x\e)'\m \fst \m \fst \p\m$,
so $\p\m \notin \supp((x\g_1)'\g_2)$.
\end{enumerate}
So consideration of the coefficient of $x\p\m$ in (\ref{eq:pde}) yields:
$$
	0 = -b\alpha_\m(s)\alpha_\e'(0) = -ab\alpha_\m(s),
$$
where $-ab \ne 0$, so $\alpha_\m = 0$, a contradiction.
\end{proof}

\begin{re}
If $T \sim x$, then a calculation shows that $T$ is
shallow, moderate, or deep if and only if
$\log\circ\, T \circ\exp$ is shallow, moderate, or deep, respectively.
This invariance of the classification will show that the three
theorems above are correct for all $T \in \T$ with $T \sim x$,
even if not log-free.  For example,
$T = x+\log x$ is shallow, since log-free
$$
	\log\circ\;T\circ\exp = x + \sum_{j=1}^\infty
	\frac{(-1)^{j+1}}{j}\,x^j e^{-jx}
$$
is shallow.

Recall that if $T$ is large and positive and has exponentiality $0$,
then for some $k$ we have $\log^{[k]} \circ\, T \circ \exp^{[k]} \sim x$.
We might try to use the same principle
to extend the definitions of shallow, moderate,
and deep to such transseries $T$ even if $T \not\sim x$.
Define $T$ \Def{shallow} provided $\log^{[k]} \circ\, T \circ \exp^{[k]}$
is shallow for some $k$; similarly for moderate and deep.
Examples:
$T = x\log x$ is shallow, since $\log\circ\,T\circ\exp = x + \log x$
is shallow.  The finite power series $U = 2x-2/x$ is moderate, since
$$
	\log\circ\, U\circ\exp = x + \log 2
	- \sum_{j=1}^\infty \frac{e^{-2jx}}{j}
$$
is moderate.
And $V = 2x - 2e^{-x}$ is deep.

But the usefulness of this extension is not entirely clear,
since it may produce a family $\Phi(s,x)$ without
common support.  Example:
For $T = x\log x$ we compute $S = \log\circ\,T\circ\exp = x + \log x$.
So we get a real iteration group for $S$ of the form
$\Psi(s,x) = x + s\log x + \o(1)$, and then a
real iteration group for $T$ of the form
$\Phi(s,x) = \exp(\Psi(s,\log x))= x(\log x)^s +\cdots$.
These are not all supported by a common subgrid or even
a common well ordered set.  When the support
depends on the parameter $s$, it may no longer make
sense to require the coefficients be differentiable.

Example $x^2+c$ is also deep.  It is discussed in Section~\ref{sec:julia},
below.  The figure there illustrates supports of iterates $M^{[s]}$ that
vary with $s$.
\end{re}

\subsection*{Technical Lemmas}

\begin{lem}\label{lem:logder}
Let $\fa, \fb \in \G$ and $A, B \in \T$.
{\rm(a)}~If $\fa = e^A$, with $A \ne 0$ purely large, then
$\fa^\dagger = A'$.
{\rm(b)}~$(AB)^\dagger = A^\dagger + B^\dagger$.
{\rm(c)}~If $A = a \g (1+U)$, $\g\ne 1$, $U \fst 1$,
then $A^\dagger \sim \g^\dagger$.
{\rm(d)}~If $1 \fst \fa \fsteq \fb$, then
$\fa^\dagger \fsteq \fb^\dagger$.
{\rm(d$'$)}~If $1 \fst A \fsteq B$, then $A^\dagger \fsteq B^\dagger$.
{\rm(e)}~If $1 \fgt \fa \fgteq \fb$, then
$\fa^\dagger \fsteq \fb^\dagger$.
{\rm(e$'$)}~If $1 \fgt A \fgteq B$, then $A^\dagger \fsteq B^\dagger$.
{\rm(f)}~If $\fb \fst \fa \fst 1/\fb$, then
$\fa^\dagger \fsteq \fb^\dagger$.
{\rm(g)}~If $\fb \ne 1$ is log-free and $\n \in \supp\big((x\fb)'\big)$, then $\n^\dagger \sim \fb^\dagger$.
\end{lem}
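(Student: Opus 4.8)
The parts are interdependent, so I would prove them in the order (a), (b), (c), then (d)--(f) together with the transseries versions, and finally (g). Parts (a) and (b) are immediate from the definition $A^\dagger := A'/A$: by the chain rule $(e^A)' = A'e^A$, so $(e^A)^\dagger = A'$; by the product rule $(AB)' = A'B + AB'$, so $(AB)^\dagger = A^\dagger + B^\dagger$. For (c), apply (b): $A^\dagger = a^\dagger + \g^\dagger + (1+U)^\dagger = \g^\dagger + U'/(1+U)$, since constants have zero logarithmic derivative. As $U \fst 1$ we have $1+U \fe 1$, so $U'/(1+U) \fe U'$ (or $=0$ when $U=0$), and it is enough to show $U' \fst \g^\dagger$; passing to $\n := \mag U \fst 1$, this is the statement $\n' = \n\,\n^\dagger \fst \g^\dagger$ for every monomial $\g \ne 1$. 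That last fact is a standard manifestation of the ``smallness'' of the derivation of $\T$ --- the derivative of a small transseries lies strictly below the logarithmic derivative of every nontrivial monomial --- which I would quote from the references cited in the Review; it gives $A^\dagger \sim \g^\dagger$.

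For (d), write $\fa = e^A$, $\fb = e^B$ with $A, B$ purely large; then $1 \fst \fa \fsteq \fb$ becomes $A > 0$ and $\mag A \fsteq \mag B$, and by (a) the assertion is $A' \fsteq B'$ --- the transseries l'H\^opital rule (order-monotonicity of the derivation on large transseries), which I invoke from the references. Part (e) is the mirror image with $\fa = e^{-A}$, $\fb = e^{-B}$, $0 < A \fsteq B$ purely large: here $\fa^\dagger = -A' \fsteq -B' = \fb^\dagger$ by the same rule. The transseries versions follow by passing to dominant monomials: for (d$'$), $1 \fst A \fsteq B$ gives $1 \fst \mag A \fsteq \mag B$, so $(\mag A)^\dagger \fsteq (\mag B)^\dagger$ by (d), while $A^\dagger \fe (\mag A)^\dagger$ and $B^\dagger \fe (\mag B)^\dagger$ by (c); (e$'$) is analogous with small monomials and (e). For (f): if $\fa \fst 1$ then $\fb \fst \fa \fst 1$ and (e) applies; if $\fa \fgt 1$ then $1 \fst \fa \fst 1/\fb$, so by (d) and (b), $\fa^\dagger \fsteq (1/\fb)^\dagger = -\fb^\dagger$; and $\fa = 1$ is trivial.

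For (g), the key is the identity $(x\fb)' = \fb + x\fb' = \fb\,(1 + x\fb^\dagger)$ (using $\fb' = \fb\fb^\dagger$). Thus every $\n \in \supp\big((x\fb)'\big)$ factors as $\n = \fb\,\mathfrak c$ with $\mathfrak c \in \supp(1 + x\fb^\dagger)$ and $\n^\dagger = \fb^\dagger + \mathfrak c^\dagger$ by (b); since $\fb \ne 1$ gives $\fb^\dagger \ne 0$, the case $\mathfrak c = 1$ is settled, and it remains to prove $\mathfrak c^\dagger \fst \fb^\dagger$ whenever $\mathfrak c \in \supp(x\fb^\dagger)$. I would argue this by induction on the exponential height of $\fb$. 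The base case $\fb = x^r$ is trivial, since $x\fb^\dagger = r$ is constant, so $\mathfrak c$ can only be $1$. In the inductive step write $\log\fb = r\log x + L$ with $L$ log-free purely large of strictly smaller height, so $x\fb^\dagger = r + xL'$; any $\mathfrak c \ne 1$ in its support has the form $\mathfrak c = x\m\mathfrak d$ with $\m \fgt 1$ a log-free monomial of $L$ and $\mathfrak d$ a monomial of $\m^\dagger$. Applying the inductive hypothesis to $\m$ (via $(x\m)' = \m(1 + x\m^\dagger)$) bounds $\mathfrak d^\dagger \fsteq \m^\dagger$; then (b), (d), and the fact that $\mag L$ is log-free --- which forces $1/x \fst \mag(L') = \mag(\fb^\dagger)$ and $\m^\dagger \fsteq (\mag L)^\dagger \fst \mag(L')$ --- give $\mathfrak c^\dagger = x^\dagger + \m^\dagger + \mathfrak d^\dagger \fst \mag(L') \fe \fb^\dagger$.

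I expect (g) to be the main obstacle: organizing the induction on exponential height so that every estimate stays \emph{strict} ($\fst$, not merely $\fsteq$), and isolating the single use of the log-free hypothesis. That hypothesis is exactly what rules out monomials like $\fb = \log x$, where $(x\fb)' = \log x + 1$ acquires a constant term (equivalently $\mathfrak c = 1/\log x$ with $\mathfrak c^\dagger \fe \fb^\dagger$) and the conclusion $\n^\dagger \sim \fb^\dagger$ fails. The other essential ingredient, pervading (c)--(f), is the smallness and order-monotonicity of the derivation on purely large transseries (the l'H\^opital rule); this is standard and available in the references, so no new work is needed there.
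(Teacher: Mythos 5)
Your parts (a)--(f) are essentially the paper's own proof: (a) and (b) are the same computations, (c) is the same reduction (the paper argues $L\fgt 1\fgt U$ so $L'\fgt U'$, the standard monotonicity of the derivation that you cite from the references), and (d), (e), (f), (d$'$), (e$'$) follow the identical pattern of writing $\fa=e^A$, $\fb=e^B$ with $A,B$ purely large, invoking $A\fsteq B$, $B\not\fe 1\Rightarrow A'\fsteq B'$, and passing to dominant monomials via (c). The genuine divergence is in (g). The paper factors $(x\fb)'=\big((b+1)+xL'\big)\fb$, observes that every monomial $\fa$ of $(b+1)+xL'$ has strictly smaller exponential height than $\fb$, and quotes the ``height wins'' principle to get $\fb\fst\fa\fst1/\fb$, adding that in the height-wins situation the comparisons of (d)/(e) become strict, so $\fa^\dagger\fst\fb^\dagger$ and $\n^\dagger\sim\fb^\dagger$ in one step, with no induction. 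You instead run an induction on exponential height, decomposing each $\mathfrak c\in\supp(x\fb^\dagger)$, $\mathfrak c\ne1$, as $x\m\mathfrak d$ with $\m\in\supp L$ and $\mathfrak d\in\supp(\m^\dagger)$; this is longer but more self-contained, and it isolates explicitly the one use of log-freeness ($L'\fgt1/x$), which the paper hides inside the height mechanism. Two small points need patching in your inductive step: first, the hypothesis speaks of $\supp\big((x\m)'\big)$, but you apply it to every $\mathfrak d\in\supp(\m^\dagger)$ via $(x\m)'=\m(1+x\m^\dagger)$; the monomial $x\m\mathfrak d$ can drop out of $\supp\big((x\m)'\big)$ by cancellation exactly when $\mathfrak d=x^{-1}$, and that case must be (and easily is) handled directly, since then $\mathfrak d^\dagger\fe1/x\fsteq\m^\dagger$ for any log-free $\m\fgt1$; second, what the hypothesis literally yields is $1/x+\mathfrak d^\dagger\fst\m^\dagger$, and the conclusion $\mathfrak d^\dagger\fsteq\m^\dagger$ requires one more appeal to $1/x\fsteq\m^\dagger$. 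With those two lines added, your (g) is a correct alternative to the paper's height-wins argument, trading a citation to the expository reference for a somewhat more delicate but elementary induction.
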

\begin{proof}
(a) $\fa' = A'e^A$ so $\fa^\dagger = \fa'/\fa = A'$.

(b) Product rule.

(c) Since $\g = e^L$, $L$ purely large, note
$\g^\dagger = L'$.  Also $U \fst 1$ so
$(1+U)^\dagger = U'/(1+U) \sim U'$.  Now
$L \fgt 1 \fgt U$, so $L' \fgt U'$ and
$\g^\dagger \fgt (1+U)^\dagger$.  So
$A^\dagger = a^\dagger + \g^\dagger + (1+U)^\dagger \sim \g^\dagger$.

(d) Write $\fa = e^A, \fb = e^B$, with $A,B$ purely large.
Now $0 < A \le B$, so $A \fsteq B$.  Also $B \not\fe 1$, so
$A' \fsteq B'$, that is, $\fa^\dagger \fsteq \fb^\dagger$.
(d$'$) follows from (c).

(e) Write $\fa = e^A, \fb = e^B$, with $A,B$ purely large.
Now $0 > A \ge B$, so $A \fsteq B$.  Also $B \not\fe 1$, so
$A' \fsteq B'$, that is, $\fa^\dagger \fsteq \fb^\dagger$.
(e$'$) follows from (c).

(f) If $\fa=1$, then $\fa^\dagger = 0$.
If $\fa \fst 1$, then apply (e).  If $\fa \fgt 1$,
note that $\fb^\dagger = -(1/\fb)^\dagger \fe (1/\fb)^\dagger$,
then apply (d).

(g) Case 1: $\fb = x^b$, $b \in \R$, $b \ne 0$.
Then $x\fb = x^{b+1}$, $(x\fb)' = (b+1) x^b \sim \fb$.  So
$((x\fb)')^\dagger \sim \fb^\dagger$.
Case 2: $\fb = x^b e^L$, $L \ne 0$ has height $n \ge 0$.
Assume $\fb \fst 1$, the case $\fb \fgt 1$ is similar.
Then $x\fb = x^{b+1}e^L$, $(x\fb)' = ((b+1)+xL')x^be^L = A\fb$
where $A = (b+1)+xL'$ has height $n$.  Now
$\n \in \supp\big((x\fb)'\big)$, so $\n = \fa\fb$ where
$\fa \in \supp A$.  But $\fa$ has height $n$, so by ``height wins''
\cite[\Bheightwins]{edgar}, we have
$\fb \fst \fa \fst 1/\fb$.  In the proofs of
(d) and (e), if it is a case of ``height wins'' then
we get strict inequality
$A \fst B$ and $\fa^\dagger \fst \fb^\dagger$.
So here $\fa^\dagger \fst \fb^\dagger$.
Therefore $\n^\dagger = (\fa\fb)^\dagger
= \fa^\dagger + \fb^\dagger \sim \fb^\dagger$.
\end{proof}

\begin{lem}\label{lem:eT}
Let $T = x(1+a\e+ \o(\e))$ with $\e \in \G$, $\e \fst 1$,
$a \in \R$, $a > 0$.  Let $\g \in \G$, $\g \fst 1$.
Then
{\rm(i)}~$\e(T) \sim \e$;
{\rm(ii)}~if $\g \fst 1/(x\e)$ then $\g(T) \sim \g$;
{\rm(iii)}~if $\g \sim -b/(x\e)$, $b \in \R$ then $\g(T) \sim e^{-ab}\g$;
{\rm(iv)}~if $\g \fgt 1/(x\e)$, then $\g(T) \fst \g$.
\end{lem}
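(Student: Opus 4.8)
The plan is to reduce all four parts to the behaviour of one difference. Since $\g\in\G$ with $\g\fst 1$, write $\g=e^{-L}$, $L\in\T$ purely large and positive; then $\g^\dagger=-L'$ by Lemma~\ref{lem:logder}(a), $L'>0$, and $\g(T)=\g\circ T=e^{-(L\circ T)}$, so
\[
	\frac{\g(T)}{\g}=e^{-D},\qquad D:=L\circ T-L .
\]
Hence (i) and (ii) reduce to $D\fst 1$, (iii) to $D-ab\fst 1$, and (iv) to: $D>0$ and $D\fgt 1$. Throughout I use that $T-x=xU\sim a\,x\e$ with $a>0$, so $\mag(T-x)=x\e\fst x$, and that $T>x$ (as $U>0$) with $L$ increasing (as $L'>0$).

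The core of the argument is the estimate
\[
	D=L\circ T-L\;\sim\;L'\cdot(T-x)\;\sim\;-a\,x\e\,\g^\dagger ,
\]
valid in the non-deep cases $\g^\dagger\fsteq 1/(x\e)$. I would derive it from the Taylor expansion of composition, $L\circ T=\sum_{n\ge 0}\frac{L^{(n)}}{n!}(T-x)^n$ (legitimate by the composition theory developed in \cite{edgarc}), by showing the $n=1$ term dominates the remainder: the ratio of the $(n{+}1)$st term to the $n$th is $\frac{T-x}{n+1}(L^{(n)})^\dagger$, and this is $\fst 1$ because $\mag(T-x)=x\e$ while $(L^{(n)})^\dagger\fst 1/(x\e)$ for every $n\ge 1$ --- the latter obtained from Lemma~\ref{lem:logder}(d),(e) applied to $\mag L^{(n)}$ together with Remark~\ref{ecase}, which gives $\e^\dagger\fst 1/(x\e)$ and hence $\bigl(1/(x\e)\bigr)^\dagger\fst 1/(x\e)$. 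This uniform-in-$n$ control of the logarithmic derivatives $(L^{(n)})^\dagger$ --- equivalently, the fact that for non-deep $\g$ the Taylor series collapses to its leading term, while for deep $\g$ it may fail to converge --- is the step I expect to be the main obstacle.

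Granting the estimate, the cases follow. (i) $\g=\e$: by Remark~\ref{ecase} $\e^\dagger\fst 1/(x\e)$, so $\mag D=x\e\,\mag\e^\dagger\fst 1$, hence $D\fst 1$ and $\e(T)=e^{-D}\e\sim\e$. (ii) The same computation with $\g^\dagger\fst 1/(x\e)$ gives $\mag D\fst 1$, so $\g(T)\sim\g$. (iii) If $\g^\dagger\sim -b/(x\e)$ then $D\sim -a\,x\e\cdot\bigl(-b/(x\e)\bigr)=ab$, i.e.\ $D-ab\fst 1$, so $\g(T)=e^{-D}\g\sim e^{-ab}\g$. (iv) Here $\mag L'=\mag\g^\dagger\fgt 1/(x\e)$ and the Taylor series may diverge, so I argue directly: by the Mean Value Theorem for transseries \cite[\Cmvtii]{edgarc}, $D=(L'\circ\xi)(T-x)$ for some $\xi$ with $x\le\xi\le T$, hence $\xi\sim x$ and $\xi\ge x$; one checks $\mag(L'\circ\xi)\fgteq\mag L'$ (when $\mag L'\fgt 1$ this is monotonicity of $\mag L'$ together with $\xi\ge x$; when $\mag L'\fsteq 1$ it is a short Taylor estimate, now legitimate since $(\mag L')^\dagger\fst 1/(x\e)$ by Lemma~\ref{lem:logder}(e) and Remark~\ref{ecase}). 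Then $\mag D\fgteq\mag L'\cdot x\e\fgt 1$, and $D>0$, so $\g(T)=e^{-D}\g\fst\g$.
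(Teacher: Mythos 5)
Your reduction to the single quantity $D=L\circ T-L$ is fine, and your part (iv) is essentially the paper's argument, but the core of your treatment of (i)--(iii) has a genuine gap exactly where you flagged it. You need two things you do not establish: (a) that the Taylor expansion $L\circ T=\sum_{n\ge0}\frac{L^{(n)}}{n!}(T-x)^n$ is a valid (summable) representation of the composition for an \emph{arbitrary} purely large $L$ in this regime --- this is not an unconditional fact of the composition theory (for instance $L=e^{e^x}$, $T=x+1$ gives a non-summable family), so its validity must itself be argued from smallness hypotheses on $L'$ and its iterated derivatives; and (b) the uniform bound $(L^{(n)})^\dagger\fst 1/(x\e)$ for all $n\ge1$. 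Your proposed justification of (b) from Lemma~\ref{lem:logder}(d),(e) and Remark~\ref{ecase} does not go through as stated: parts (d),(e) bound $\m^\dagger$ only when you can compare $\m$ against a monomial on the appropriate side of $1$, and for the small monomials $\mag L^{(n)}$ (which occur already for $L=x^{1/2}$, $n=2$) you would need a \emph{lower} bound on $\mag L^{(n)}$, which the hypothesis $\g^\dagger\fsteq 1/(x\e)$ alone does not give. What actually makes (b) true is a ``height wins'' induction on the structure of derivatives (the kind of argument behind Lemma~\ref{lem:logder}(g) and Lemmas~\ref{lem:group}, \ref{lem:closure}): $\mag L'=\mag(\fa_0^\dagger)\,\fa_0$ with $\fa_0=\mag L\fgt1$, and one descends through exponential heights. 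So your plan can be completed, but the missing step is a real piece of work, not a citation.

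The paper avoids all of this: it writes $\log\bigl(\g(T)/\g\bigr)=\int_x^T\g^\dagger$ and estimates the integral directly by the two-sided mean-value bound of \cite[\Cmvtii]{edgarc}, the integral lying between $(T-x)/(x\e)$ and $(T-x)/\bigl(T\e(T)\bigr)$, both $\sim a$ once (i) is known (and (i) itself follows from $\e'\fst 1/x$ by the same integral device). That handles (ii), (iii), (iv) uniformly in a few lines, with no Taylor expansion and no control of higher derivatives --- it is the same tool you already invoke in your part (iv) (note, incidentally, that the ``intermediate point'' form you quote there is \cite[\Cmvtiii]{edgarc}; \Cmvtii{} is the two-sided bound). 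I would recommend replacing your Taylor-based derivation of the estimate for $D$ by this integral estimate; as written, the proposal is incomplete at its central step.
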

\begin{proof}
(i) As in Remark~\ref{ecase}, $\e' \fst 1/x$.  Then
$$
	\e(T) - \e = \int_x^T \e' \fst \int_x^T \frac{1}{x}
	= \log\frac{T}{x} = \log(1 + a\e + \o(\e)) \sim a\e \fe \e,
$$
so $\e(T) \sim \e$.

(ii) Assume $\g^\dagger \fst 1/(x\e)$.  Then
$$
	\log\frac{\g(T)}{\g} = \int_x^T \g^\dagger \fst
	\int_x^T \frac{1}{x\e}.
$$
By \cite[\Cmvtii]{edgarc}, the value of this integral is between
$$
	\frac{T-x}{x\e} \sim \frac{ax\e}{x\e} = a
	\qquad{and}\qquad
	\frac{T-x}{T\e(T)} \sim \frac{ax\e}{x\e} = a .
$$
Thus $\log(\g(T)/\g) \fst 1$ so $\g(T)/\g \sim 1$.

(iii) Assume $\g^\dagger \sim -b/(x\e)$.  Then
$$
	\log\frac{\g(T)}{\g} = \int_x^T \g^\dagger
	\sim \int_x^T \frac{-b}{x\e} \sim -ab,
$$
where the integral was estimated in the same way as in (ii).
Therefore $\g(T) \sim e^{-ab}\g$.

(iv) Assume $\g^\dagger \fgt 1/(x\e)$.
Then
$$
	\log\frac{\g(T)}{\g(x)} =
	\int_x^T \g^\dagger \fgt
	\int_x^T \frac{1}{x\e} \sim a .
$$
But $\log(\g(T)/\g(x))<0$ so $\g(T)/\g(x) \fst 1$
and $\g(T) \fst \g$.
\end{proof}

For completeness, we note the following analogous version
for $T < x$.

\begin{lem}\label{lem:eTx}
Let $T = x(1-a\e+ \o(\e))$ with $\e \in \G$, $\e \fst 1$,
$a \in \R$, $a > 0$.  Let $\g \in \G$, $\g \fst 1$.
Then
{\rm(i)}~$\e(T) \sim \e$;
{\rm(ii)}~if $\g \fst 1/(x\e)$ then $\g(T) \sim \g$;
{\rm(iii)}~if $\g \sim -b/(x\e)$, $b \in \R$ then $\g(T) \sim e^{ab}\g$;
{\rm(iv)}~if $\g \fgt 1/(x\e)$, then $\g(T) \fgt \g$.
\end{lem}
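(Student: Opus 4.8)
The plan is to follow the proof of Lemma~\ref{lem:eT} essentially verbatim, changing only the signs that arise from $T < x$ (so that $T - x \sim -ax\e < 0$) and from integrating ``downward'' from $x$ to $T$. As there, $\int_x^T f$ denotes $F(T) - F(x)$ for an antiderivative $F$ of $f$, so the integral identities of that proof carry over without change.

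For (i) I would note, as in Remark~\ref{ecase}, that $\e' \fst 1/x$, whence
\[
  \e(T) - \e = \int_x^T \e' \fst \int_x^T \frac1x = \log\frac Tx
  = \log\bigl(1 - a\e + \o(\e)\bigr) \sim -a\e \fe \e ,
\]
giving $\e(T) \sim \e$; the only change from Lemma~\ref{lem:eT} is the sign inside the logarithm, which does not affect magnitudes. For (ii), I would write $\log(\g(T)/\g) = \int_x^T \g^\dagger \fst \int_x^T 1/(x\e)$ and estimate $\int_x^T 1/(x\e)$, via the mean value bound of \cite{edgarc}, as lying between $(T-x)/(x\e) \sim -a$ and $(T-x)/\bigl(T\e(T)\bigr) \sim -a$ (using (i) and $T \sim x$); hence $\log(\g(T)/\g) \fst 1$ and $\g(T) \sim \g$. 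For (iii), the same estimate with an extra factor $-b$ gives $\log(\g(T)/\g) \sim (-b)(-a) = ab$, so $\g(T) \sim e^{ab}\g$. For (iv), writing $\g = e^{-L}$ with $L > 0$ purely large, one has $\g^\dagger = -L' < 0$, and integrating from $x$ down to $T < x$ makes $\int_x^T \g^\dagger$ positive and of magnitude $\fgt 1$ (it is $\fgt \int_x^T 1/(x\e) \sim -a$), so $\g(T) \fgt \g$.

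The calculations are routine; the one thing requiring attention is the bookkeeping of signs. Replacing $T > x$ by $T < x$ reverses the orientation of the integral and the sign of $T - x$, so the constant $a$ occurring in the estimates of Lemma~\ref{lem:eT} becomes $-a$; this leaves statements (i) and (ii) verbatim, but turns $e^{-ab}$ into $e^{ab}$ in (iii) and $\g(T) \fst \g$ into $\g(T) \fgt \g$ in (iv). The only genuine obstacle is to confirm that the integral-comparison steps and the mean value estimate invoked in Lemma~\ref{lem:eT} (from \cite[\Cmvtii]{edgarc}, with its companion \cite[\Cmvtiii]{edgarc}) are valid for $T < x$ as well as for $T > x$; this comes down to quoting the appropriate orientation of the cited propositions.
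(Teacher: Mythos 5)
Your proposal is correct and is exactly what the paper intends: the paper states Lemma~\ref{lem:eTx} without proof, as the ``analogous version'' of Lemma~\ref{lem:eT} for $T<x$, and your sign-bookkeeping adaptation (the estimates $(T-x)/(x\e)\sim -a$ turning $e^{-ab}$ into $e^{ab}$ in (iii) and reversing the conclusion in (iv), with the orientation of the mean-value estimate from \cite{edgarc} the only point to quote carefully) is precisely the intended argument.
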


\begin{lem}\label{lem:group}
Let $\m \in \G$ be a monomial.  Let
$\BB = \SET{\g \in \G}{\g^\dagger \fst \m}$ and
$\widetilde\BB = \SET{\g \in \G}{\g^\dagger \fsteq \m}$.
Then: {\rm(a)}~$\BB$ and $\widetilde\BB$ are subgroups of $\G$.
{\rm(b)}~Let $\g \in \G$ be log-free and small.
If $\g \in \BB$, then $\supp\big((x\g)'\big) \subseteq \BB$.
If $\g \in \widetilde\BB$, then
$\supp\big((x\g)'\big) \subseteq \widetilde\BB$.
{\rm(c)}~Let $\g = x^be^L$, where $L$ is purely large
and log-free.  If $\g \in \BB$, then
$\supp L \subseteq \BB$.
If $\g \in \widetilde\BB$, then
$\supp L \subseteq \widetilde\BB$.
\end{lem}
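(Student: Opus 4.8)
The plan is to establish parts (a), (b), (c) in that order, since (b) and (c) lean on the subgroup property. For part (a), I would use Lemma~\ref{lem:logder}(b), which says $(\fa\fb)^\dagger = \fa^\dagger + \fb^\dagger$, together with $(\fa^{-1})^\dagger = -\fa^\dagger$. Given $\fa,\fb \in \BB$, i.e.\ $\fa^\dagger \fst \m$ and $\fb^\dagger \fst \m$, I get $(\fa\fb)^\dagger = \fa^\dagger + \fb^\dagger \fst \m$ because $\fst$-smallness is preserved under sums, and $(\fa^{-1})^\dagger = -\fa^\dagger \fst \m$; also $1^\dagger = 0 \fst \m$. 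Hence $\BB$ is a subgroup, and the identical argument with $\fsteq$ in place of $\fst$ handles $\widetilde\BB$ (here one uses that $\fa^\dagger \fsteq \m$ and $\fb^\dagger \fsteq \m$ give $\fa^\dagger + \fb^\dagger \fsteq \m$, since the sum of two transseries each $\fsteq \m$ is again $\fsteq \m$).

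For part (b): let $\g$ be log-free and small, $\g \in \BB$. Write $\g = x^b e^L$ with $L$ purely large and log-free (allowing $L = 0$, the pure-power case). Then $(x\g)' = (x^{b+1}e^L)' = \big((b+1) + xL'\big)x^b e^L = A\g$ where $A = (b+1) + xL'$. Each $\n \in \supp\big((x\g)'\big)$ has the form $\n = \fa\g$ with $\fa \in \supp A$. By the proof of Lemma~\ref{lem:logder}(g)---specifically the "height wins" analysis there---each such $\fa$ satisfies $\fa^\dagger \fst \g^\dagger$ when $\g$ has positive exponential height, and $\fa^\dagger = 0$ in the pure-power subcase; in either case $\fa^\dagger \fsteq \g^\dagger$. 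Actually the cleanest route is to invoke Lemma~\ref{lem:logder}(g) directly: it gives $\n^\dagger \sim \g^\dagger$ for $\g \ne 1$, hence $\n^\dagger \fe \g^\dagger \fst \m$, so $\n \in \BB$. (If $\g = 1$ there is nothing to prove since $(x\cdot 1)' = 1$ and $1^\dagger = 0 \fst \m$.) The $\widetilde\BB$ statement follows the same way using $\n^\dagger \sim \g^\dagger \fsteq \m$.

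For part (c): $\g = x^b e^L$ with $L$ purely large and log-free, $\g \in \BB$, i.e.\ $\g^\dagger = b/x + L' \fst \m$. By "height wins" the two summands $b/x$ and $L'$ are $\fe$-comparable in a way that makes $L' \fsteq \g^\dagger$ (if $L \ne 0$ has positive height, $L' \fgg 1/x$, so $L' \sim \g^\dagger$; if $L$ has height $0$ it is a power of $x$ and the analysis is direct), hence $L' \fst \m$. Now for $\fn \in \supp L$, I claim $\fn^\dagger \fst \m$. Since $L$ is purely large and log-free, each monomial $\fn \in \supp L$ satisfies $\fn \fsteq \mag L$, and because $L' \fst \m$ with $L' \sim (\mag L)'$ one has $(\mag L)^\dagger \fsteq L'/L \cdot(\text{something})$---more carefully, I would argue that $\fn^\dagger \fsteq (\mag L)^\dagger$ using Lemma~\ref{lem:logder}(d) (applied to the large monomials $\fn \fsteq \mag L$, both $\fgt 1$ after possibly replacing by reciprocals, noting $L$ purely large means all its monomials are $\fgt 1$), and then $(\mag L)^\dagger = (\mag L)'/\mag L \fst L'/L \cdot L \cdot (1/\mag L)$; the point is $(\mag L)' \fsteq L'$ and $\mag L$ is the largest monomial so division only helps. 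The honest statement: $\fn^\dagger \fsteq (\mag L)^\dagger \fsteq L'$ since $L \sim \mag L$ gives $L^\dagger \sim (\mag L)^\dagger$, and $L' = L^\dagger L$ with $L \fgt 1$, so $(\mag L)^\dagger \sim L^\dagger = L'/L \fst L' \fst \m$. Thus $\fn^\dagger \fst \m$, i.e.\ $\fn \in \BB$. The $\widetilde\BB$ case is identical with $\fsteq$ throughout.

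The main obstacle I anticipate is part (c): getting the clean implication $\g^\dagger \fst \m \Rightarrow L' \fst \m \Rightarrow \fn^\dagger \fst \m$ for every monomial $\fn$ in $\supp L$ requires carefully tracking how $\dagger$ interacts with taking the magnitude of a purely large transseries and with the division by $L$ (which is large). The subtlety is that $\fn^\dagger$ for small monomials $\fn$ and for large monomials behaves oppositely under the order (Lemma~\ref{lem:logder}(d) vs.\ (e)), and $L$ being purely large means $\supp L$ consists of large monomials, so one must use (d)/(d$'$) and the fact that $L^\dagger \fst L'$ (because $L \fgt 1$). Everything else is a routine application of the additivity of $\dagger$ and preservation of $\fst$/$\fsteq$ under sums.
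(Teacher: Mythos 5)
Your proposal is correct and follows essentially the same route as the paper: part (a) by additivity of $\dagger$, part (b) by invoking Lemma~\ref{lem:logder}(g) directly, and part (c) by a height-wins comparison of $\n^\dagger$ with $\g^\dagger$. Despite some loose phrasing in (c) (a height-$0$ purely large $L$ is a series in powers of $x$, not a single power, and ``$L \sim \mag L$'' should be $L \fe \mag L$ fed into Lemma~\ref{lem:logder}(c)), your final chain $\n^\dagger \fsteq (\mag L)^\dagger \fe L^\dagger \fst L' \fe \g^\dagger \fsteq \m$ is a valid fleshing-out of the paper's one-line appeal to ``height wins''.
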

\begin{proof}
(a)  $(1/\g)^\dagger = -\g^\dagger$, so if $\g \in \BB$, then
$1/\g \in \BB$ and if $\g \in \widetilde\BB$, then
$1/\g \in \widetilde\BB$.  And
$(\g_1 \g_2)^\dagger = \g_1^\dagger + \g_2^\dagger$,  so
if $\g_1, \g_2 \in \BB$, then $\g_1 \g_2 \in \BB$, and
if $\g_1, \g_2 \in \widetilde\BB$, then $\g_1 \g_2 \in \widetilde\BB$.

(b)  Apply Lemma~\ref{lem:logder}(g).

(c) Let $\n \in \supp L$.  Then by ``height wins''
we have $n^\dagger \fsteq \g^\dagger$.
\end{proof}

\begin{lem}\label{lem:deriv}
{\rm\fbox{G}} Let $\AA$ be a subgrid.  Then
$\bigcup_{\fa \in \AA} \supp(\fa')$
is also a subgrid.  For any given $\g \in \G$, there
are only finitely many $\fa \in \AA$ with $\g \in \supp(\fa')$.
\end{lem}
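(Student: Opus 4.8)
The plan is to reduce both assertions to the multiplicative structure of a single grid. Since $\AA$ is a subgrid, choose a ratio set $\bmu = \{\mu_1,\dots,\mu_n\} \subseteq \Gsmall$ and $\bm \in \Z^n$ with $\AA \subseteq \GRID^{\ebmu,\bm}$. If $\fa \in \GRID^{\ebmu,\bm}$, write $\fa = \bmu^{\bk}$ with $\bk \ge \bm$; then Lemma~\ref{lem:logder}(b) (together with $(1/\m)^{\dagger} = -\m^{\dagger}$) gives
\[
	\fa^\dagger = \sum_{i=1}^{n} k_i\,\mu_i^\dagger ,
	\qquad\text{hence}\qquad
	\supp(\fa^\dagger) \subseteq \mathcal N := \bigcup_{i=1}^{n} \supp(\mu_i^\dagger) .
\]
The essential observation is that $\mathcal N$ depends only on $\bmu$, not on the exponent vector $\bk$; moreover $\mathcal N$ is a subgrid, being a finite union of the subgrids $\supp(\mu_i^\dagger)$. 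Since multiplication of a transseries by the monomial $\fa$ merely translates its support, from $\fa' = \fa\cdot\fa^\dagger$ we get $\supp(\fa') = \fa\,\supp(\fa^\dagger) \subseteq \fa\,\mathcal N$ for every $\fa \in \GRID^{\ebmu,\bm}$.

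For the first claim, this yields
\[
	\bigcup_{\fa \in \AA} \supp(\fa')
	\;\subseteq\; \bigcup_{\fa \in \GRID^{\ebmu,\bm}} \fa\,\mathcal N
	\;=\; \GRID^{\ebmu,\bm}\cdot\mathcal N ,
\]
and the product of the grid $\GRID^{\ebmu,\bm}$ with the subgrid $\mathcal N$ is again a subgrid by the grid-product results \cite[\Bgridprod]{edgar}. A subset of a subgrid is a subgrid, so $\bigcup_{\fa \in \AA}\supp(\fa')$ is a subgrid.

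For the second claim, fix $\g \in \G$. If $\g \in \supp(\fa')$ for some $\fa \in \AA$, then $\g \in \fa\,\mathcal N$, i.e.\ $\g = \fa\n$ for some $\n \in \mathcal N$; since $\fa \in \AA \subseteq \GRID^{\ebmu,\bm}$, every such $\fa$ occurs as the first factor in a factorization of $\g$ over $\GRID^{\ebmu,\bm}\times\mathcal N$. By the finiteness of factorizations in a grid product \cite[\Bwellproduct]{edgar} there are only finitely many such factorizations, hence finitely many distinct first factors $\fa$, and a fortiori only finitely many $\fa \in \AA$ with $\g \in \supp(\fa')$.

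The only real work is establishing $\supp(\fa') \subseteq \fa\,\mathcal N$ uniformly in $\fa$, which is exactly the remark that the logarithmic derivative of a grid monomial is a fixed integer combination of the $\mu_i^\dagger$; once that is in place, both conclusions follow from the standard closure and finite-factorization properties of grids. (Alternatively, the finiteness can be argued directly: if there were infinitely many such $\fa$, then the subgrid $\AA$ would contain an infinite sequence $\fa^{(0)} \fgt \fa^{(1)} \fgt \cdots$, and writing $\g = \fa^{(j)}\n_j$ with $\n_j \in \mathcal N$ would force $\n_0 \fst \n_1 \fst \cdots$, an impossible infinite increasing chain in the subgrid $\mathcal N$.)
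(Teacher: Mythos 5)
Your argument is correct. Both conclusions do reduce, as you say, to the single uniform inclusion $\supp(\fa') = \fa\,\supp(\fa^\dagger) \subseteq \fa\,\mathcal N$ with $\mathcal N = \bigcup_i \supp(\mu_i^\dagger)$ fixed by the ratio set, after which the closure of subgrids under products and the finiteness of factorizations $\g = \fa\n$ give the two claims; your parenthetical direct argument for the finiteness (extracting $\fa^{(0)} \fgt \fa^{(1)} \fgt \cdots$ from an infinite set of witnesses and obtaining an impossible increasing chain $\n_0 \fst \n_1 \fst \cdots$ in $\mathcal N$) is also sound. The paper does not argue this at all: it simply cites the companion paper \cite[\Wgridderiv]{edgarw}, where the statement is established in the course of showing that termwise differentiation of a grid-based series is well defined. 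So you have supplied a self-contained proof where the paper defers to a reference, and your route has the merit of exposing exactly which background facts are used (additivity of the logarithmic derivative, so $\fa^\dagger = \sum_i k_i\mu_i^\dagger$, plus the grid-product lemmas \cite[\Bgridprod]{edgar}, \cite[\Bwellproduct]{edgar}). One caveat worth being aware of: you invoke that each $\supp(\mu_i^\dagger)$ is a subgrid, i.e.\ that differentiation of the monomials $\mu_i$ yields grid-based transseries. In this paper that is legitimate background, but in a foundational development the present lemma is itself an ingredient in defining the derivative of a grid-based series (as the paper's parenthetical remark notes), so your argument would there have to be organized as the induction step of a recursion on exponential height, with the $\mu_i^\dagger$ handled at lower height. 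As a proof within this paper, it stands as written.
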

\begin{proof} \cite[\Wgridderiv]{edgarw}.
(This is exacly what is needed for
the proof that the derivative $(\sum_{\fa\in\AA}\fa)'$
is defined.)
\end{proof}

\begin{lem}\label{lemw:deriv}
{\rm\fbox{W}} Let $\AA$ be well ordered.  Then
$\bigcup_{\fa \in \AA} \supp(\fa')$
is also well ordered.  For any given $\g \in \G$, there
are only finitely many $\fa \in \AA$ with $\g \in \supp(\fa')$.
\end{lem}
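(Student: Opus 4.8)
The plan is to prove this by induction on the exponential height, in exact parallel with the grid‑based Lemma~\ref{lem:deriv} from \cite{edgarw}, the only change being that the finite generating ratio set used there is replaced by the well‑ordered set $\AA$. Fix $N,M$ with $\AA\subseteq\G_{N,M}$ (implicit in ``well ordered'', just as ``subgrid'' already entails it for Lemma~\ref{lem:deriv}), and induct on $N$. Write $\ell_0=x$ and $\ell_i=\log_i x$. Throughout I use that $\supp(\fa')=\fa\cdot\supp(\fa^\dagger)$ and that multiplication by a fixed monomial is an order‑automorphism of $\G$, hence carries well‑ordered sets to well‑ordered sets. For the base case $N=0$, each $\fa\in\AA$ is a log‑power monomial $\fa=\ell_0^{r_0}\cdots\ell_M^{r_M}$, so $\fa^\dagger=\sum_{i=0}^M r_i/(\ell_0\cdots\ell_i)$ and $\supp(\fa')\subseteq\SET{\fa/(\ell_0\cdots\ell_i)}{0\le i\le M}$; hence $\bigcup_{\fa\in\AA}\supp(\fa')\subseteq\bigcup_{i=0}^M(\ell_0\cdots\ell_i)^{-1}\AA$ is a finite union of multiplicative translates of $\AA$ and so well ordered, while $\g\in\supp(\fa')$ forces $\fa=\g\,\ell_0\cdots\ell_i$ for one of $M+1$ values of $i$, so only finitely many $\fa$ can contribute a given $\g$.

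For the inductive step ($N\ge1$), for each $\fa\in\AA$ with $\fa\ne1$ I would peel off the iterated‑logarithm part, writing $\fa=\n_\fa\,e^{L_\fa}$ where $\n_\fa=\ell_0^{r_0}\cdots\ell_M^{r_M}$ is a log‑power monomial and $L_\fa$ is purely large with no $\ell_j$ ($j\ge1$) in its support. Because of that last property, $e^{L_\fa}$ lies exactly one exponential level above $L_\fa$, so every monomial of $L_\fa$ has exponential height $\le N-1$. Then $\fa^\dagger=\n_\fa^\dagger+L_\fa'$, where $\supp(\n_\fa^\dagger)\subseteq\SET{1/(\ell_0\cdots\ell_i)}{0\le i\le M}$ as in the base case and $\supp(L_\fa')\subseteq\bigcup_{\p\in\supp L_\fa}\supp(\p')$.

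The crux is a sub‑claim: the set $\mathcal L:=\{\ell_1,\dots,\ell_{M+1}\}\cup\bigcup_{\fa\in\AA}\supp L_\fa$ is again well ordered --- i.e.\ the ``logarithm'' of a well‑ordered set of monomials has well‑ordered total support. In the grid case the analogous statement is the trivial remark that a finite union of subgrids is a subgrid; in the well‑based case it is a genuine Neumann--Higman‑type fact, belonging to the construction of the derivation on the well‑based transline in \cite{edgarw} and in the same family as \cite[\Cwomonoid]{edgarc}. I expect this to be the main obstacle.

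Granting the sub‑claim, $\mathcal L$ consists of monomials of exponential height $\le N-1$, so the inductive hypothesis applies to it: $\mathcal D:=\bigcup_{\p\in\mathcal L}\supp(\p')$ is well ordered and each monomial lies in $\supp(\p')$ for only finitely many $\p\in\mathcal L$. Then for every $\fa\in\AA$,
\[
	\supp(\fa')=\fa\cdot\supp(\fa^\dagger)\subseteq\SET{\fa/(\ell_0\cdots\ell_i)}{0\le i\le M}\cup\fa\,\mathcal D ,
\]
so $\bigcup_{\fa\in\AA}\supp(\fa')\subseteq\big(\bigcup_{i=0}^M(\ell_0\cdots\ell_i)^{-1}\AA\big)\cup(\AA\cdot\mathcal D)$. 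The first part is well ordered as in the base case; $\AA\cdot\mathcal D$ is well ordered because a product of two well‑ordered subsets of the ordered abelian group $\G$ is well ordered (Neumann--Higman; \cite[\Cwomonoid]{edgarc}). For local finiteness, a monomial $\g\in\supp(\fa')$ is either one of the $\le M+1$ monomials $\g\,\ell_0\cdots\ell_i$, or factors as $\g=\fa\,\fb$ with $\fb\in\mathcal D$; since in such a product every element has only finitely many factorizations, either way only finitely many $\fa$ contribute, which closes the induction.
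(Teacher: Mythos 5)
The sub-claim you yourself flag as the crux is in fact false, and with it the inductive step collapses. Take $L_n = x^2 - x^{1-1/n}$ for $n\ge 2$ and $\fa_n = e^{L_n}\in\G_1$. For $n<m$ we have $L_n - L_m = x^{1-1/m}-x^{1-1/n}>0$, so $\fa_2\fgt\fa_3\fgt\cdots$ is strictly decreasing; hence $\AA=\SET{\fa_n}{n\ge2}\subset\G_{1,0}$ is well ordered in the paper's sense (every nonempty subset has a $\fgt$-greatest element). Yet $\bigcup_n\supp L_n\supseteq\SET{x^{1-1/n}}{n\ge2}$ contains the strictly increasing chain $x^{1/2}\fst x^{2/3}\fst x^{3/4}\fst\cdots$, so your $\mathcal L$ is \emph{not} well ordered and the inductive hypothesis cannot be applied to it. Moreover the difficulty is not merely that the sub-claim needs a different proof: the enveloping set you aim for is genuinely too large. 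In this example $\mathcal D$ would contain every $x^{-1/n}$, so $\AA\cdot\mathcal D$ contains $\fa_2x^{-1/2}\fst\fa_2x^{-1/3}\fst\cdots$ and is itself not well ordered --- even though the set the lemma is about, $\bigcup_n\supp(\fa_n')=\SET{xe^{L_n},\;x^{-1/n}e^{L_n}}{n\ge2}$, \emph{is} well ordered (the lemma is true here). The point is that $\supp(\fa')=\fa\cdot\supp(L_\fa')$ couples each $\fa$ with its \emph{own} exponent $L_\fa$, and passing to the decoupled product $\AA\cdot\bigcup_{\fa}\supp(L_\fa')$ throws away exactly the information that makes the statement true: the bad small monomials coming from $L_\fa'$ are only ever multiplied by their own, rapidly decreasing, $\fa$. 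So any proof must retain this pairing, e.g. by starting from a putative increasing sequence $\g_k\in\supp(\fa_{n_k}')$, writing $\g_k=\fa_{n_k}\m_k$ with $\m_k\in\supp(\fa_{n_k}^\dagger)$, and playing the comparison of the $\fa_{n_k}$ off against that of the $\m_k$.

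For comparison: the paper does not prove this lemma at all; it quotes it from \cite[Lem.~3.2]{DMM} (or \cite[\Cwellderiv]{edgarc}), where precisely this finer bookkeeping is carried out. Your base case $N=0$, the reduction $\supp(\fa')=\fa\cdot\supp(\fa^\dagger)$, and the use of Neumann--Higman for products of well-ordered sets are all fine; it is only the passage through $\mathcal L$ and $\AA\cdot\mathcal D$ that has to be abandoned, and since that is the entire content of the inductive step, the proposal as it stands does not establish the lemma.
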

\begin{proof} \cite[Lem.~3.2]{DMM} or \cite[\Cwellderiv]{edgarc}.
\end{proof}

\begin{lem}\label{lem:prod}
{\rm\fbox{G}} Let $\AA$ and $\BB$ be subgrids.  Then
$\AA\BB$ is a subgrid.  If $\g \in \AA\BB$, then there
are only finitely many pairs $\fa \in \AA, \fb \in \BB$
with $\fa\fb=\g$.
\end{lem}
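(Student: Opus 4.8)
The plan is to reduce both assertions to the case of full grids and then run a short bookkeeping argument on exponent vectors over a common ratio set; the statement is also recorded as \cite[\Bgridprod]{edgar}, so in the write-up one may simply cite it and indicate the reduction. First I would pick a ratio set $\ebmu=\{\mu_1,\dots,\mu_n\}$ and $\bm\in\Z^n$ with $\AA\subseteq\GRID^{\ebmu,\bm}$, and a ratio set $\ebmu'=\{\mu'_1,\dots,\mu'_{n'}\}$ and $\bm'\in\Z^{n'}$ with $\BB\subseteq\GRID^{\ebmu',\bm'}$. Since a subset of a subgrid is a subgrid, and since every factorization $\fa\fb=\g$ with $\fa\in\AA$, $\fb\in\BB$ is in particular a factorization with $\fa\in\GRID^{\ebmu,\bm}$, $\fb\in\GRID^{\ebmu',\bm'}$, it suffices to prove both claims for these two full grids. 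Put $\boldsymbol\rho=\ebmu\cup\ebmu'=\{\rho_1,\dots,\rho_p\}$; a union of two finite subsets of $\Gsmall$ is again a finite subset of $\Gsmall$, so $\boldsymbol\rho$ is a ratio set, and each $\mu_i$ and each $\mu'_j$ equals some $\rho_r$.

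For the ``subgrid'' part I would rewrite products over $\boldsymbol\rho$. For $\bk,\bk'$ one has $\bmu^{\bk}(\ebmu')^{\bk'}=\boldsymbol\rho^{\mathbf c}$, where $\mathbf c\in\Z^p$ has $r$-th coordinate $\sum_{\mu_i=\rho_r}k_i+\sum_{\mu'_j=\rho_r}k'_j$ (each of these two sums is empty or a single term, since $\ebmu,\ebmu'$ are sets). When $\bk\ge\bm$ and $\bk'\ge\bm'$, each coordinate of $\mathbf c$ is bounded below by the corresponding sum of entries of $\bm$ and $\bm'$; call the resulting fixed vector $\mathbf d\in\Z^p$. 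Then $\GRID^{\ebmu,\bm}\,\GRID^{\ebmu',\bm'}\subseteq\GRID^{\boldsymbol\rho,\mathbf d}$, which is a grid, so $\AA\BB$ is a subgrid.

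For the finiteness part, fix $\g$; it is enough to bound the number of pairs $(\bk,\bk')$ with $\bk\ge\bm$, $\bk'\ge\bm'$, and $\bmu^{\bk}(\ebmu')^{\bk'}=\g$. Each such pair produces, by the bookkeeping above, a vector $\mathbf c\in\Z^p$ with $\mathbf c\ge\mathbf d$ and $\boldsymbol\rho^{\mathbf c}=\g$. The set $\SET{\mathbf c\ge\mathbf d}{\boldsymbol\rho^{\mathbf c}=\g}$ is finite: if it were infinite, Dickson's lemma would give two distinct members $\mathbf c_1\le\mathbf c_2$ of it, and then $\mathbf c_2-\mathbf c_1$ would be a nonzero vector in $\N^p$ with $\boldsymbol\rho^{\mathbf c_2-\mathbf c_1}=1$, which is impossible because any product of the small monomials $\rho_r$ with nonnegative, not-all-zero exponents is $\fst1$. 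For each of these finitely many $\mathbf c$, the system $\sum_{\mu_i=\rho_r}k_i+\sum_{\mu'_j=\rho_r}k'_j=c_r$ together with $\bk\ge\bm$, $\bk'\ge\bm'$ has only finitely many solutions, since it confines every $k_i$ and every $k'_j$ to a finite range. Summing over $\mathbf c$ gives the desired finite bound.

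The step I expect to be the real obstacle is this last count, precisely because $\boldsymbol\rho$ need not be multiplicatively independent: a single monomial $\g$ can have many exponent vectors over $\boldsymbol\rho$, and a single $\mathbf c$ can arise from several pairs $(\bk,\bk')$. The Dickson-plus-positivity argument above is exactly what controls both kinds of multiplicity. One could instead first replace $\boldsymbol\rho$ by a multiplicatively independent ratio set generating the same group and work with genuinely injective parametrizations, but producing such a set of \emph{small} monomials is itself a minor chore, so the direct argument seems cleaner. Finally, the finiteness assertion alone also follows from the well-based product \cite[\Bwellproduct]{edgar}, since each grid is well ordered under the reverse order $\fgt$; but the ``subgrid'' conclusion genuinely needs the lattice structure, so that observation does not shorten the proof much.
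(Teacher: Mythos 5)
Your argument is correct, but it is worth noting that the paper itself gives no argument at all for this lemma: its ``proof'' is the citation \cite[\Bgridprod]{edgar} together with \cite[\Bwellproduct]{edgar}, exactly the option you mention in passing. What you have written is a self-contained replacement for those citations, and it holds up: the reduction to the full grids $\GRID^{\ebmu,\bm}$ and $\GRID^{\ebmu',\bm'}$ is legitimate (a subset of a grid is a subgrid, and every factorization in $\AA\times\BB$ is one in the larger grids, so over-counting exponent pairs is harmless); the union $\boldsymbol\rho=\ebmu\cup\ebmu'$ is again a finite subset of $\Gsmall$, hence a ratio set, and your bookkeeping shows $\GRID^{\ebmu,\bm}\,\GRID^{\ebmu',\bm'}\subseteq\GRID^{\boldsymbol\rho,\mathbf d}$; and the finiteness count is exactly where care is needed, since $\boldsymbol\rho$ need not be multiplicatively independent. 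Your Dickson-plus-positivity step (two comparable exponent vectors for the same $\g$ would force a nontrivial nonnegative relation $\boldsymbol\rho^{\mathbf c_2-\mathbf c_1}=1$, impossible since a nontrivial nonnegative product of small monomials is $\fst 1$) correctly disposes of that, and for each admissible $\mathbf c$ the linear system with the lower bounds $\bk\ge\bm$, $\bk'\ge\bm'$ indeed has finitely many solutions because each variable occurs in exactly one coordinate equation with all other contributions bounded below. The trade-off is the expected one: the paper's citation keeps this auxiliary lemma to one line and leans on \cite{edgar}, where essentially this combined-ratio-set argument is carried out; your version costs half a page but makes the present paper independent of that reference for this point, and it also isolates clearly the one genuinely delicate issue (non-injectivity of the exponent parametrization), which the bare citation hides.
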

\begin{proof}
\cite[\Bgridprod]{edgar} and \cite[\Bwellproduct]{edgar}.
\end{proof}

\begin{lem}\label{lemw:prod}
{\rm\fbox{W}} Let $\AA$ and $\BB$ be well ordered.  Then
$\AA\BB$ is well ordered.  If $\g \in \AA\BB$, then there
are only finitely many pairs $\fa \in \AA, \fb \in \BB$
with $\fa\fb=\g$.
\end{lem}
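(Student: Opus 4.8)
The plan is to prove the classical Neumann product lemma, adapted to the ordered abelian group $\G$ with the ordering $\fst$. Since $\AA$ and $\BB$ are subsets of the totally ordered set $(\G,\fsteq)$, ``well ordered for the converse of $\fst$'' is equivalent to saying there is no infinite sequence $\m_1 \fst \m_2 \fst \m_3 \fst \cdots$ (strictly increasing in magnitude) lying in the set. The one tool I would isolate first is a Ramsey-type restatement: a subset $W \subseteq \G$ is well based if and only if every sequence $(\g_n)$ of elements of $W$ has an infinite subsequence $(\g_{n_k})$ with $\g_{n_1} \fgteq \g_{n_2} \fgteq \cdots$. Indeed, any sequence in a totally ordered set has an infinite monotone subsequence; in a well based $W$ a non-decreasing one is eventually constant and hence yields a non-increasing one, and conversely an infinite strictly increasing sequence admits no infinite non-increasing subsequence.

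\textbf{Finiteness of factorizations.} Suppose some $\g \in \AA\BB$ had infinitely many distinct factorizations $\g = \fa_n\fb_n$ ($n \in \N$). Since $\fb_n = \g/\fa_n$ is determined by $\fa_n$, the $\fa_n$ are pairwise distinct, so $\{\fa_n : n \in \N\}$ is an infinite subset of the well based set $\AA$, and its first $\omega$ elements in the well order form a sequence $\fa^{(1)} \fgt \fa^{(2)} \fgt \fa^{(3)} \fgt \cdots$. Dividing a fixed monomial by these reverses the order, so $\g/\fa^{(1)} \fst \g/\fa^{(2)} \fst \g/\fa^{(3)} \fst \cdots$ is an infinite strictly increasing sequence of elements of $\BB$, contradicting that $\BB$ is well based.

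\textbf{Well-orderedness of $\AA\BB$.} Suppose not; then there is an infinite strictly increasing sequence $\g_1 \fst \g_2 \fst \g_3 \fst \cdots$ in $\AA\BB$, and we write $\g_n = \fa_n\fb_n$ with $\fa_n \in \AA$, $\fb_n \in \BB$. By the restatement applied to $(\fa_n)$ in $\AA$, there is an infinite subsequence along which the $\fa$'s are non-increasing; restricting to that subsequence and applying the restatement again to the corresponding $\fb$'s in $\BB$, we obtain a further infinite subsequence along which the $\fb$'s are also non-increasing. After relabelling we have $\fa_1 \fgteq \fa_2 \fgteq \cdots$ and $\fb_1 \fgteq \fb_2 \fgteq \cdots$, hence $\g_n = \fa_n\fb_n$ satisfies $\g_1 \fgteq \g_2 \fgteq \cdots$. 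But an infinite subsequence of a strictly increasing sequence is still strictly increasing, so $\g_1 \fst \g_2$ as well --- contradicting $\g_1 \fgteq \g_2$.

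The main obstacle is entirely bookkeeping with the reversed ordering: one must keep in mind that ``well ordered'' here refers to the converse of $\fst$, so that the \emph{non-increasing} (in magnitude) sequences are the harmless ones, that dividing into a fixed $\g$ turns a strictly $\fgt$-decreasing sequence into a strictly $\fst$-increasing one, and that the two successive subsequence extractions in the last step do not disturb the strict monotonicity of $(\g_n)$. Apart from that the argument is the classical one; alternatively one could simply cite Neumann's lemma, whose additive form was already invoked (via \cite[\Cwomonoid]{edgarc}) to see that the semigroup generated by a well ordered set is well ordered.
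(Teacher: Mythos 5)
Your proof is correct: it is the classical Neumann/Higman sequence-extraction argument, with the reversed ordering and the compatibility of $\fst$ with multiplication in the ordered group $\G$ handled correctly in both the finiteness-of-factorizations step and the double subsequence extraction. The paper gives no argument of its own here --- it simply cites Prop.~3.27 of the expository ``beginners'' paper --- and what you have written is precisely the standard proof of that cited result, so there is no methodological difference to report.
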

\begin{proof}
\cite[\Bwellproduct]{edgar}.
\end{proof}

\begin{lem}\label{lem:finite}
{\rm\fbox{G}} Let $\BB$ be a subgrid.  Let $\g \in \G$.
There are only finitely many pairs $\g_1,\g_2 \in \BB$
such that $\g \in \supp\big((x\g_1)'\g_2\big)$.
\end{lem}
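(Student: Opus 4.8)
The plan is to reduce this finiteness statement to the two finiteness results already available for subgrids: Lemma~\ref{lem:deriv} (for a given $\g$, only finitely many $\fa$ in a subgrid have $\g \in \supp(\fa')$) and Lemma~\ref{lem:prod} (for a given $\g$, only finitely many factorizations $\g = \fa\fb$ with $\fa,\fb$ ranging over two subgrids). First I would observe that $\BB$ is a subgrid, hence $x\BB = \SET{x\g_1}{\g_1 \in \BB}$ is a subgrid (multiplication by the fixed monomial $x$), and therefore $\CC := \bigcup_{\g_1 \in \BB}\supp\big((x\g_1)'\big)$ is a subgrid by Lemma~\ref{lem:deriv}. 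Then $\CC\,\BB$ is a subgrid by Lemma~\ref{lem:prod}, and every monomial occurring in some $(x\g_1)'\g_2$ with $\g_1,\g_2 \in \BB$ lies in $\CC\,\BB$; in particular if $\g \in \supp\big((x\g_1)'\g_2\big)$ then $\g$ factors as $\n\cdot\g_2$ with $\n \in \CC$ and $\g_2 \in \BB$.

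The counting then goes as follows. Fix $\g \in \G$. By Lemma~\ref{lem:prod} applied to the subgrids $\CC$ and $\BB$, there are only finitely many pairs $(\n,\g_2)$ with $\n \in \CC$, $\g_2 \in \BB$, and $\n\g_2 = \g$; call this finite set of pairs $P$. For each such pair, $\n \in \CC$ means $\n \in \supp\big((x\g_1)'\big)$ for some $\g_1 \in \BB$, and by Lemma~\ref{lem:deriv} (applied to the subgrid $x\BB$, noting $\n \in \supp\big((x\g_1)'\big)$ determines the element $x\g_1$, hence $\g_1$) there are only finitely many $\g_1 \in \BB$ with $\n \in \supp\big((x\g_1)'\big)$. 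So each pair in $P$ gives rise to only finitely many admissible $\g_1$, and since $P$ itself is finite, the total number of triples $(\g_1,\g_2)$ — a fortiori the number of pairs $(\g_1,\g_2)$ — with $\g \in \supp\big((x\g_1)'\g_2\big)$ is finite.

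I do not expect a serious obstacle here: the statement is essentially a bookkeeping consequence of the subgrid closure and finiteness properties, and the only point requiring a little care is making sure the intermediate monomial $\n$ is allowed to be pinned down before counting the $\g_1$ that produce it — i.e. that one applies the derivative-finiteness lemma to the subgrid $x\BB$ at the specific monomial $\n$, not at $\g$. (For the well-based version one would run the identical argument with Lemma~\ref{lemw:deriv} and Lemma~\ref{lemw:prod} in place of Lemma~\ref{lem:deriv} and Lemma~\ref{lem:prod}, which is presumably the content of the companion Lemma~\ref{lemw:finite} referenced in the theorems above.)
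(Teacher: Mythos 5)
Your proposal is correct and follows essentially the same route as the paper: pass to the subgrid $x\BB$, use Lemma~\ref{lem:deriv} to see that $\bigcup_{\g_1\in\BB}\supp\big((x\g_1)'\big)$ is a subgrid, and then apply Lemma~\ref{lem:prod} to its product with $\BB$. The only difference is that you spell out the counting step explicitly (finitely many factorizations $\g=\n\g_2$, then finitely many $\g_1$ producing each $\n$ by the second clause of Lemma~\ref{lem:deriv}), a detail the paper leaves implicit in the grid-based proof but states in its well-based companion Lemma~\ref{lemw:finite}.
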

\begin{proof}
Since $\BB$ is a subgrid, also
$\BB_1 := \SET{x\g}{\g \in \BB}$ is a subgrid.
By Lemma~\ref{lem:deriv},
$\BB_2 := \bigcup_{\g \in \BB} \supp((x\g)')$ is a subgrid.
Then $\BB_2 \BB = \bigcup_{\g_1,\g_2 \in \BB} \supp((x\g_1)'\g_2)$,
and by Lemma~\ref{lem:prod}, for any given $\g \in \G$,
there are only finitely many
pairs $\g_1,\g_2 \in \BB$
such that $\g \in \supp\big((x\g_1)'\g_2\big)$.
\end{proof}

\begin{lem}\label{lemw:finite}
{\rm\fbox{W}} Let $\BB$ be well ordered.  Let $\g \in \G$.
There are only finitely many pairs $\g_1,\g_2 \in \BB$
such that $\g \in \supp\big((x\g_1)'\g_2\big)$.
\end{lem}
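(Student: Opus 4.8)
The plan is to mimic the grid-based proof (Lemma~\ref{lem:finite}) but with ``subgrid'' replaced by ``well ordered'' throughout, invoking the well-based versions of the auxiliary lemmas. First I would note that $\BB$ well ordered implies $\BB_1 := \SET{x\g}{\g \in \BB}$ is well ordered, since multiplication by the fixed monomial $x$ is an order isomorphism of $\G$ (it is one-to-one and, for small monomials, ``height wins'' is not even needed — $\g_1 \fst \g_2 \iff x\g_1 \fst x\g_2$). Then by Lemma~\ref{lemw:deriv} applied to $\BB_1$, the set $\BB_2 := \bigcup_{\g \in \BB} \supp\big((x\g)'\big)$ is well ordered. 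By Lemma~\ref{lemw:prod}, the product $\BB_2\BB$ is well ordered, and
$$
	\BB_2 \BB = \bigcup_{\g_1,\g_2 \in \BB} \supp\big((x\g_1)'\g_2\big) .
$$

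Next I would extract the finiteness conclusion. Fix $\g \in \G$; I want only finitely many pairs $\g_1,\g_2 \in \BB$ with $\g \in \supp\big((x\g_1)'\g_2\big)$. Each such occurrence factors as $\g = \n\g_2$ with $\n \in \supp\big((x\g_1)'\big)$. By Lemma~\ref{lemw:prod} applied to $\BB_2$ and $\BB$, there are only finitely many pairs $(\n,\g_2) \in \BB_2 \times \BB$ with $\n\g_2 = \g$; and for each such $\n$, Lemma~\ref{lemw:deriv} (second sentence) says only finitely many $\g_1 \in \BB$ have $\n \in \supp\big((x\g_1)'\big)$. Combining these two finiteness facts gives finitely many triples $(\g_1,\n,\g_2)$, hence finitely many pairs $(\g_1,\g_2)$, which is exactly the claim.

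I do not expect any serious obstacle here: the whole point is that the grid-based proof of Lemma~\ref{lem:finite} used only three ingredients — closure of the relevant classes of monomial sets under (a) multiplication by $x$, (b) termwise differentiation, (c) products — together with the associated finite-fiber statements, and every one of these has a stated well-based counterpart (Lemmas~\ref{lemw:deriv} and~\ref{lemw:prod}) in the excerpt. The one place to be slightly careful is that ``$x\g$ for $\g$ small is again small'' and ``$\g \mapsto x\g$ preserves $\fst$'', so that $\BB_1$ is genuinely well ordered (not merely a subset of a well ordered set, though that would suffice too); this is immediate from the definition of the monomial ordering and needs no height argument. With that remark in place, the argument is the verbatim well-based translation of Lemma~\ref{lem:finite}, so in the write-up I would simply say ``Same as the proof of Lemma~\ref{lem:finite}, using Lemmas~\ref{lemw:deriv} and~\ref{lemw:prod} in place of Lemmas~\ref{lem:deriv} and~\ref{lem:prod}.''
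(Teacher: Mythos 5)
Your proposal is correct and follows essentially the same route as the paper's own proof: form $\BB_1 = x\BB$, apply Lemma~\ref{lemw:deriv} to get that $\BB_2 = \bigcup_{\g\in\BB}\supp\big((x\g)'\big)$ is well ordered with finite fibers, then apply Lemma~\ref{lemw:prod} to $\BB_2\BB$ and combine the two finiteness statements. Your write-up is in fact slightly more explicit than the paper's (which states the finite-fiber fact from Lemma~\ref{lemw:deriv} but leaves the final combination of the two finiteness facts implicit), so there is nothing to change.
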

\begin{proof}
Since $\BB$ is well ordered, also $\BB_1 := \SET{x\g}{\g \in \BB}$
is well ordered.  By Lemma~\ref{lemw:deriv},
$\BB_2 := \bigcup_{\g \in \BB} \supp((x\g)')$ is well ordered,
and each monomial in $\BB_2$ belongs to $\supp((x\g)')$ for
only finitely many $\g$.  Then
$\BB_2\BB = \bigcup_{\g_1,\g_2\in\BB} \supp((x\g_1)'\g_2)$,
and by Lemma~\ref{lemw:prod}, for any given $\g \in \G$,
there are only finitely many pairs $\g_2,\g_2 \in \BB$
such that $\g \in \supp((x\g);\g_2)$.
\end{proof}

\begin{lem}\label{lem:closure}
Let $\e \in \G$, $\e \fst 1$.  Let
$$
	\AA = \SET{\g \in \G}{\g \fsteq \e, \g^\dagger \fst \frac{1}{x\e}} ,
	\qquad
	\widetilde\AA = \SET{\g \in \G}{\g \fsteq \e,
	\g^\dagger \fsteq \frac{1}{x\e}} .
$$
{\rm(a)}  If $\g_1, \g_2 \in \AA$, then $\g_1 \g_2 \in \AA$.
If $\g_1, \g_2 \in \widetilde\AA$, then $\g_1 \g_2 \in \widetilde\AA$.
{\rm(b)}  If $\g_1, \g_2 \in \AA$, then
$\supp\big((x\g_1)'\g_2\big) \subseteq \AA$.
If $\g_1, \g_2 \in \widetilde\AA$, then
$\supp\big((x\g_1)'\g_2\big) \subseteq \widetilde\AA$.
{\rm(c)}  If $\g \in \AA$, then
$\supp(x\e\g') \subseteq \AA$.
If $\g \in \widetilde\AA$, then
$\supp(x\e\g') \subseteq \widetilde\AA$.
\end{lem}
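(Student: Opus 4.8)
The plan is to reduce both the $\AA$ and the $\widetilde\AA$ assertions to the group $\BB = \SET{\g \in \G}{\g^\dagger \fst 1/(x\e)}$ (respectively $\widetilde\BB = \SET{\g\in\G}{\g^\dagger\fsteq1/(x\e)}$) of Lemma~\ref{lem:group}, applied with $\m = 1/(x\e)$, together with one extra fact: that $\BB$ is closed under $\g\mapsto\supp(\g^\dagger)$. Note first that $\AA = \SET{\g\in\G}{\g\fsteq\e}\cap\BB$ and $\widetilde\AA = \SET{\g\in\G}{\g\fsteq\e}\cap\widetilde\BB$. In each of (a), (b), (c) the ``size'' requirement $\g\fsteq\e$ will fall out of crude magnitude bookkeeping (using $\g_i\fsteq\e\fst1$ and $\mag\g_i^\dagger\fsteq1/(x\e)$), while the requirement on logarithmic derivatives will come from the subgroup property (Lemma~\ref{lem:group}(a)) together with the sub-claim below.

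\emph{Sub-claim:} if $\g\in\BB$ then $\supp(\g^\dagger)\subseteq\BB$, and likewise with $\widetilde\BB$ throughout. I would prove this by induction on the exponential height of $\g$, writing $\g$ in normal form $\g = \lambda\cdot e^Q$ where $\lambda$ is a finite product of powers of the iterated logarithms $x,\log x,\log_2 x,\dots$ and $Q$ is purely large of strictly smaller exponential height ($Q=0$ in the base case). Then $\g^\dagger = \lambda^\dagger + Q'$, so $\supp(\g^\dagger)\subseteq\supp(\lambda^\dagger)\cup\supp(Q')$. Each monomial of $\supp(\lambda^\dagger)$ has the form $1/(x\log x\cdots\log_k x)$, with logarithmic derivative $\sim -1/x$; since $\e\fst1$ we have $1/x\fst1/(x\e)$, so $\supp(\lambda^\dagger)\subseteq\BB$ and also $\lambda\in\BB$. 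Hence $e^Q=\g/\lambda\in\BB$, i.e.\ $Q'\fst1/(x\e)$. A monotonicity observation --- for purely large $Q$ one has $\mag(Q') = \mag\bigl((\mag Q)'\bigr)$, proved by writing $Q = c\,\mag Q + R$ and applying Lemma~\ref{lem:logder}(d$'$) to $R$ --- then gives $\mag Q\in\BB$, and Lemma~\ref{lem:logder}(d$'$) again gives $\supp Q\subseteq\BB$. For each $\m\in\supp Q$ the induction hypothesis applies ($\m$ has smaller exponential height and $\m\in\BB$), so $\supp(\m^\dagger)\subseteq\BB$, whence $\supp(\m') = \m\cdot\supp(\m^\dagger)\subseteq\BB$ by the group property; summing over $\m\in\supp Q$ yields $\supp(Q')\subseteq\BB$.

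Granting the sub-claim, the three parts follow. For (a): $(\g_1\g_2)^\dagger = \g_1^\dagger+\g_2^\dagger\fst1/(x\e)$ by Lemma~\ref{lem:group}(a), and $\g_1\g_2\fst\g_1\fsteq\e$ because $\g_2\fsteq\e\fst1$. For (c): $x\e\g' = (x\e\g)\g^\dagger$, so $\supp(x\e\g') = x\e\g\cdot\supp(\g^\dagger)$; for $\n\in\supp(\g^\dagger)$ we have $\n\fsteq\mag\g^\dagger\fst1/(x\e)$, hence $x\e\g\n\fst\g\fsteq\e$, while $(x\e\g\n)^\dagger = 1/x + \e^\dagger + \g^\dagger + \n^\dagger\fst1/(x\e)$ using $1/x\fst1/(x\e)$, $\e^\dagger\fst1/(x\e)$ (Remark~\ref{ecase}), $\g^\dagger\fst1/(x\e)$, and $\n^\dagger\fst1/(x\e)$ from the sub-claim. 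For (b): $(x\g_1)'\g_2 = \g_1\g_2 + x\g_1'\g_2 = \g_1\g_2 + \frac{\g_2}{\e}\,(x\e\g_1')$; the monomial $\g_1\g_2$ lies in $\AA$ by (a), and for each $\n\in\supp(x\e\g_1')$, which is contained in $\AA$ by (c), the monomial $\n\g_2/\e$ lies in $\AA$ --- its size is $\fsteq\e\cdot\e/\e=\e$ and $(\n\g_2/\e)^\dagger = \n^\dagger+\g_2^\dagger-\e^\dagger\fst1/(x\e)$. The $\widetilde\AA$ versions are obtained throughout by replacing $\fst$ with $\fsteq$.

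The main obstacle is the sub-claim, and within it the case of monomials that are not log-free: Lemmas~\ref{lem:group}(b), (c) and~\ref{lem:logder}(g) are stated only for log-free monomials, so one must carry out the normal-form decomposition $\g=\lambda\cdot e^Q$ and supply the monotonicity-of-differentiation step by hand from Lemma~\ref{lem:logder}(d$'$); once that is available the induction on exponential height closes cleanly and the remaining estimates for (a)--(c) are routine. I would also be careful about $\fst$ versus $\fe$ at the points where the hypothesis $\e\fst1$ is used --- e.g.\ that $1/x\fst1/(x\e)$ holds \emph{strictly}, and that $\e^\dagger\fst1/(x\e)$ (Remark~\ref{ecase}) --- since these are exactly what keeps the $\lambda^\dagger$ terms and the factor $\e$ from spoiling the closure.
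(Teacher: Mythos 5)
Your proof is correct, but it takes a genuinely different route from the paper at the key step. The paper's own argument consists of the same crude size estimates you use ($\g_1\g_2 \fsteq \e\e \fst \e$, $(x\g_1)'\g_2 \fsteq (x\e)'\e \fst \e$, etc.) followed simply by citations of Lemma~\ref{lem:group}(a) for the multiplicative part and Lemma~\ref{lem:group}(b) for the derivative part (the latter resting on the ``height wins'' fact of Lemma~\ref{lem:logder}(g)); part (c) is then dispatched by the same device together with $\e^\dagger \fst 1/(x\e)$ and $x^\dagger \fst 1/(x\e)$. You instead bypass Lemma~\ref{lem:group}(b) and prove from scratch the stronger closure fact that $\g \in \BB$ implies $\supp(\g^\dagger) \subseteq \BB$, by induction on exponential height using the normal form $\g = \lambda e^Q$ with $\lambda$ a product of iterated-log powers, and then derive (c) from $x\e\g' = (x\e\g)\g^\dagger$ and (b) from $(x\g_1)'\g_2 = \g_1\g_2 + (\g_2/\e)(x\e\g_1')$; those final computations check out, including the $\widetilde\AA$ variants. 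What your detour buys is coverage of monomials with logarithmic factors: Lemmas~\ref{lem:group}(b),(c) and~\ref{lem:logder}(g) are stated only for log-free monomials, whereas the statement of Lemma~\ref{lem:closure} has no such restriction, so your sub-claim is in effect a logs-allowed strengthening of the cited lemmas (in the paper's applications everything is log-free, so the shorter citation suffices there). The cost is the extra machinery you acknowledge must be supplied by hand --- the height-drop property of the decomposition $\g=\lambda e^Q$ (which works precisely because the iterated-log terms are removed from $Q$) and the monotonicity fact $\mag(Q') = \mag\big((\mag Q)'\big)$ via Lemma~\ref{lem:logder}(d$'$); both are standard and your sketches of them are sound, so the argument closes, just far less economically than the paper's.
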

\begin{proof}
(a) If $\g_1, \g_2 \fsteq \e$, then
$\g_1 \g_2 \fsteq \e \e \fst \e$.  Combine this with
Lemma~\ref{lem:group}(a).

(b) If $\g_1 \fsteq \e$, then $x\g_1 \fsteq x\e$ and
$(x\g_1)' \fsteq (x\e)'$.  Now $\e \fst 1$ so
$x\e \fst x$ and $(x\e)' \fst 1$.  If
$\g_2 \fsteq \e$ also, then
$(x\g_1)'\g_2 \fsteq (x\e)'\e \fst 1\e = \e$.
Combine this with Lemma~\ref{lem:group}(b).

(c) is similar, noting that $\e^\dagger \fst 1/(x\e)$
and $x^\dagger \fst 1/(x\e)$.
\end{proof}

\begin{lem}\label{lem:gensubgrid}
{\rm\fbox{G}}
Let $\BB \subset \Gsmall$ be a log-free subgrid.  Write
$\e = \max\BB$ and assume
$\BB \subseteq \SET{\g \in \G}{\g \fsteq \e,\g^\dagger \fsteq 1/(x\e)}$.
Let $\widetilde\BB$ be the least subset of $\G$ such that

{\rm (i)} $\widetilde\BB \supseteq \BB$,

{\rm(ii)} if $\g_1, \g_2 \in \widetilde\BB$, then
$\g_1\g_2 \in \widetilde\BB$,

{\rm(iii)} if $\g_1, \g_2 \in \widetilde\BB$, then
$\supp\big((x\g_1)'\g_2\big) \subseteq \widetilde\BB$.

\noindent Then $\widetilde\BB$ is a subgrid.
\end{lem}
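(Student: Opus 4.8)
The plan is to realize $\widetilde\BB$ as an increasing union of subgrids and then trap all of them inside a single grid. Put $\AA_0=\BB$ and, recursively, $\AA_{n+1}=\AA_n\cup\AA_n\AA_n\cup\bigcup_{\g_1,\g_2\in\AA_n}\supp\bigl((x\g_1)'\g_2\bigr)$, so that $\widetilde\BB=\bigcup_n\AA_n$. By induction each $\AA_n$ is a subgrid: if $\AA_n$ is a subgrid then so is $x\AA_n$, hence so is $\bigcup_{\g\in\AA_n}\supp\bigl((x\g)'\bigr)$ by Lemma~\ref{lem:deriv}, hence so is its product with $\AA_n$ by Lemma~\ref{lem:prod}, and adjoining the subgrid $\AA_n\AA_n$ changes nothing. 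Moreover, since $\BB$ lies in $\widetilde\AA:=\{\g:\g\fsteq\e,\ \g^\dagger\fsteq1/(x\e)\}$ and $\widetilde\AA$ is itself closed under products and under $(\g_1,\g_2)\mapsto\supp\bigl((x\g_1)'\g_2\bigr)$ by Lemma~\ref{lem:closure}(a),(b), minimality of $\widetilde\BB$ gives $\widetilde\BB\subseteq\widetilde\AA$. In particular every monomial of $\widetilde\BB$ is log-free of exponential height at most $N$, where $N$ is the largest exponential height occurring in $\BB$, since none of the operations involved raises the height.

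What remains — and this is the heart of the argument — is to produce one ratio set $\bmu$ of small monomials with $\widetilde\BB\subseteq\GRID^{\ebmu,\bm}$ for suitable $\bm$. Fix a ratio set $\bmu_0$ for $\BB$ drawn from the group generated by $\BB$ (as one may always arrange for a subgrid), so that $\mu^\dagger\fsteq1/(x\e)$ for each $\mu\in\bmu_0$ by Lemma~\ref{lem:group}(a). For a log-free $\g=x^be^L$ with $L$ purely large, log-free, and of strictly smaller height than $\g$, one has $(x\g)'=\g\bigl((b+1)+xL'\bigr)$, so $\supp\bigl((x\g)'\bigr)\subseteq\g\cdot\bigl(\{1\}\cup x\supp(L')\bigr)$, while $\supp(L')\subseteq\bigcup_{\fn\in\supp L}\fn\cdot\supp(\fn^\dagger)$ with each $\fn$ log-free of height $<$ that of $\g$. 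I would accordingly close the generating set downward in height: alongside each generator $\mu=x^be^L$ already present, adjoin $x^{-1}$, a ratio set for the subgrid $\supp L$, and — recursively — generators needed to express the supports $\supp(\fn^\dagger)$ for $\fn\in\supp L$, together with the small monomials $\mag\bigl((x\g_1)'\g_2\bigr)$ that can appear. Every monomial adjoined has height strictly smaller than that of its parent, so the recursion terminates after at most $N$ rounds with a finite set; and since $\mu^\dagger=b/x+L'\fsteq1/(x\e)$ forces $L'\fsteq1/(x\e)$, which by Lemma~\ref{lem:logder}(d) (applied to the largest $\fn\in\supp L$) forces $\fn^\dagger\fst1/(x\e)$ for every $\fn\in\supp L$, every adjoined generator again lies in the subgroup of $\G$ on which $\g^\dagger\fsteq1/(x\e)$. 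With $\bmu$ so built, $G=\GRID^\ebmu$ satisfies conditions (i)–(iii): it contains $\BB$; it is closed under products; and, since for every $\g=x^be^L\in G$ the support of $L$ again lies in $G$ and $x^{-1}\in G$, it is closed under $\g\mapsto\supp\bigl((x\g)'\bigr)$. Hence $\widetilde\BB\subseteq G$. Finally, because each step of the construction enlarges a monomial only by a factor assembled from finitely many possibilities — products of $\bmu_0$-monomials and derivative factors $x\fa$ with $\fa$ in one of the finitely many supports $\supp(\sigma^\dagger)$ — and the result always lands back inside $\widetilde\AA$, one can repackage these into a finite ratio set of small monomials (combining each possibly large derivative factor $x\fa$ with enough $\bmu_0$-monomials to make it small); the grid $\GRID^{\ebmu,\bm}$ attached to that ratio set then contains $\widetilde\BB$.

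The main obstacle is precisely this last bookkeeping, and it has two aspects. First, one must check that passing to ratio sets of the auxiliary supports $\supp L$, $\supp(\fn^\dagger)$, and so on never reintroduces a monomial of undiminished exponential height, so that the height induction genuinely terminates with finitely many generators. Second, one cannot simply intersect $G$ with the interval $\widetilde\AA$: that set need not be a subgrid, and the correct ratio set is forced to contain non-obvious monomials such as $\mag\bigl((x\e)'\bigr)$ — witness $(xe^{-x^2})'=(1-2x^2)e^{-x^2}$, where $\e=e^{-x^2}$ but $\mag\bigl((x\e)'\bigr)=x^2e^{-x^2}\fgt\e$ must still be accommodated as a generator. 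Once the finite small ratio set $\bmu$ and the bound $\bm$ are produced, $\widetilde\BB\subseteq\GRID^{\ebmu,\bm}$ is a subgrid, as claimed.
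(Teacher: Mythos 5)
Your overall strategy is parallel to the paper's: hereditarily close the generating set so that supports of exponents stay inside the generated group, use Lemma~\ref{lem:group}/Lemma~\ref{lem:closure} to keep everything in the moderate interval $\SET{\g}{\g\fsteq\e,\ \g^\dagger\fsteq 1/(x\e)}$, and then trap $\widetilde\BB$ inside one grid. But the step you yourself call ``the main obstacle'' is exactly the content of the lemma, and your final paragraph does not prove it. Showing $\widetilde\BB\subseteq G=\GRID^{\ebmu}$ is not enough, since the group $\GRID^{\ebmu}$ is not a subgrid; what is needed is a lower bound on exponents, i.e.\ containment in some $\GRID^{\ebmu,\bm}$. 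Your justification for that bound --- ``each step of the construction enlarges a monomial only by a factor assembled from finitely many possibilities \dots\ one can repackage these into a finite ratio set of small monomials'' --- is not accurate as stated (the operation multiplies by an arbitrary previously constructed $\g_2\in\widetilde\BB$, not by one of finitely many factors), and ``combining each possibly large derivative factor $x\fa$ with enough $\bmu_0$-monomials to make it small'' is precisely the assertion that requires a construction and a proof. Since $\widetilde\BB$ is defined by minimality, the only workable route is to exhibit one explicit set, contained in a grid, which contains $\BB$ and is closed under your operations (ii)--(iii); your increasing union $\bigcup_n\AA_n$ of subgrids does not supply this, because an increasing union of subgrids need not be a subgrid and you never produce a uniform ratio set and exponent bound valid for all $n$.

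The paper closes exactly this gap with a specific device. Choosing the ratio set $\bmu$ from the group generated by the hereditary closure of $\BB$ gives $\mu_i^\dagger\fsteq 1/(x\e)$, hence $x\e\mu_i^\dagger\fsteq 1$; therefore $\{\e,x^{-1}\}\cup\bigcup_i\supp(x\e\mu_i^\dagger)\cup\e^{-1}\BB$ is a subgrid all of whose elements are $\fsteq 1$, and by \cite[\Baddendumsmall]{edgar} there is a ratio set $\ba$ with $\GRID^{\ba,\0}$ containing it. The candidate set is then $\e\GRID^{\ba,\0}$: it contains $\BB$, is closed under products because $\e\in\GRID^{\ba,\0}$, and is closed under $(\g_1,\g_2)\mapsto\supp\big((x\g_1)'\g_2\big)$ because, writing $\g_1=\bmu^{\bk}$, the derivative factor satisfies $x\e\g_1^\dagger=\sum_i k_i\,x\e\mu_i^\dagger$, so its support lies in the \emph{fixed} grid $\GRID^{\ba,\0}$ uniformly in $\bk$, and $\supp(x\g_1'\g_2)=\e\,(\g_1/\e)(\g_2/\e)\,\supp(x\e\g_1^\dagger)\subseteq\e\GRID^{\ba,\0}$: one of the two spare factors of $\e$ available from $\g_1,\g_2\fsteq\e$ absorbs the possibly large derivative factor. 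Minimality then gives $\widetilde\BB\subseteq\e\GRID^{\ba,\0}$, a subgrid. Your own example with $\e=e^{-x^2}$ is handled by this mechanism: the troublesome monomial $x^2e^{-x^2}$ appears as an element of $\supp(x\e\e^\dagger)$, which is $\fsteq 1$ and so enters $\GRID^{\ba,\0}$ with nonnegative exponent; it is the hypothesis $\g^\dagger\fsteq 1/(x\e)$, used in exactly this multiplied-by-$x\e$ form, that makes the repackaging possible, and this is the step your proposal leaves unproven.
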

\begin{proof}
Let $\AA$ be the least subset of $\G$ such that

(i) $\AA \supseteq \BB$,

(ii) if $x^b e^L \in \AA$, then $\supp L \subseteq \AA$.

\noindent By \cite[\Wheredsubgrid]{edgar}, $\AA$ is a subgrid.
From Lemma~\ref{lem:group}(c) we have $\g^\dagger \fsteq 1/(x\e)$
for all $\g \in \AA$.  
There is a ratio set $\bmu = \{\mu_1,\cdots,\mu_n\}$,
chosen from the group generated
by $\AA$, so that $\AA \subseteq \GRID^\ebmu$.  Because they come
from the group generated by $\AA$, we have $\mu_i^\dagger \fsteq 1/(x\e)$
for $1 \le i \le n$ by Lemma~\ref{lem:group}(a).  Remark that
$x^\dagger \fst 1/(x\e)$ since $\e \fst 1$.  And
$\e^\dagger \fst 1/(x\e)$ was noted in Remark~\ref{ecase}.
So we may without harm add more generators to $\bmu$
and assume $\e,x \in \GRID^\ebmu$.
This has been arranged so that
if $\g \in \GRID^\ebmu$, then $\supp(\g') \subseteq \GRID^\ebmu$.
Now $x\e\mu_i^\dagger \fsteq 1$, so
$$
	\{\e, x^{-1}\} \cup \bigcup_{i=1}^n \supp(x\e\mu_i^\dagger)
	\cup \frac{1}{\e}\BB
$$
is a finite union of subgrids, so it is itself a subgrid.
All of its elements are ${}\fsteq 1$, so
\cite[\Baddendumsmall]{edgar} there is a ratio set $\ba$ such that
$\GRID^{\ba,\0}$ contains that finite union.  Again all
elements of $\ba$ may be chosen from the group
generated by $\GRID^\ebmu$.  
So all $\fa \in \GRID^{\ba,\0}$
still satisfy $\fa^\dagger \fsteq 1/(x\e)$.

To complete the proof that $\widetilde\BB$ is a subgrid,
I will show that $\widetilde\BB \subseteq \e\GRID^{\ba,\0}$.
First, note that $\BB \subseteq \e\GRID^{\ba,\0}$.
Next, if $\g_1, \g_2 \in \e\GRID^{\ba,\0}$, then
$\g_1\g_2 \in \e(\GRID^{\ba,\0}\e\GRID^{\ba,\0})
\subseteq \e\GRID^{\ba,\0}$.  Finally, suppose
$\g_1,\g_2 \in \e\GRID^{\ba,\0}$.  Because $\ba$ is from
the group $\GRID^\ebmu$, we may write
$\g_1 = \mu_1^{k_1}\cdots\mu_n^{k_n}$, and
\begin{align*}
	\g_1^\dagger &= k_1\mu_1^\dagger +\cdots+
	k_n\mu_n^\dagger ,
	\\
	x\e\g_1^\dagger &= k_1x\e\mu_1^\dagger +\cdots+
	k_nx\e\mu_n^\dagger ,
\end{align*}
so that $\supp (x\e\g_1^\dagger) \subseteq \GRID^{\ba,\0}$.
Also $\g_1/\e \in \GRID^{\ba,\0}$ and $\g_2/\e \in \GRID^{\ba,\0}$.
Therefore
\begin{equation*}
	\supp\big(x\g_1'\g_2\big) = \e\;
	\left(\frac{\g_1}{\e}\right)\left(\frac{\g_2}{\e}\right)
	\supp\left(x\e\g_1^\dagger\right)
	\subseteq \e\GRID^{\ba,\0} .
\end{equation*}
And $(x\g_1)'\g_2 = \g_1\g_2 + x\g_1'\g_2$, so
$\supp\big((x\g_1)'\g_2\big) \subseteq \e\GRID^{\ba,\0}$.
By the definition of $\widetilde\BB$ we have
$\widetilde\BB \subseteq \e\GRID^{\ba,\0}$, and it
is therefore a subgrid.
\end{proof}

\begin{lem}\label{lemw:gen}
{\rm\fbox{W}}
Let $\BB \subset \Gsmall$ be log-free and well ordered.  Write
$\e = \max\BB$ and assume
$\BB \subseteq \SET{\g \in \G}{\g \fsteq \e,\g^\dagger \fsteq 1/(x\e)}$.
Let $\widetilde\BB$ be the least subset of $\G$ such that

{\rm (i)} $\widetilde\BB \supseteq \BB$,

{\rm(ii)} if $\g_1, \g_2 \in \widetilde\BB$, then
$\g_1\g_2 \in \widetilde\BB$,

{\rm(iii)} if $\g_1, \g_2 \in \widetilde\BB$, then
$\supp\big((x\g_1)'\g_2\big) \subseteq \widetilde\BB$.

\noindent Then $\widetilde\BB$ is well ordered.
\end{lem}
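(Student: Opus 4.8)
This is the well-based twin of Lemma~\ref{lem:gensubgrid}, so the plan is to run the same argument, substituting a well-based tool for each grid-based one. The grid proof rested on two facts peculiar to grids: that a subgrid automatically lies inside the \emph{finitely generated} group $\GRID^\ebmu$ of a ratio set (on which, after adjoining $x^{-1}$, differentiation is well behaved), and that a small subgrid lies inside some $\GRID^{\ba,\0}$ (\cite[\Baddendumsmall]{edgar}). In the well-based world both are replaced by the single fact that the submonoid of $\G$ generated by a well-ordered set of small monomials is well ordered (Higman; \cite[\Cwomonoid]{edgarc}). I would also use the well-based counterpart of \cite[\Wheredsubgrid]{edgar} (the hereditary closure of a well-ordered set is well ordered), Lemmas~\ref{lemw:deriv} and~\ref{lemw:prod} in place of Lemmas~\ref{lem:deriv} and~\ref{lem:prod}, and Lemmas~\ref{lem:group}, \ref{lem:closure}, \ref{lem:logder} and Remark~\ref{ecase} verbatim, since these hold in both versions.

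Write $\m=1/(x\e)$ and let $\M$ be the least submonoid of $\G$ containing $G_0:=\frac{1}{\e}\BB\cup\{\e,x^{-1}\}$ and closed under $\Delta\colon\nu\mapsto\supp(x\e\,\nu^\dagger)$; working directly with $\Delta$ streamlines the ``ambient group plus ratio set'' bookkeeping of the grid proof. First I would note that every $\g\in\M$ is small, log-free, and satisfies $\g^\dagger\fsteq\m$: the generators do (by the hypothesis on $\BB$, by Remark~\ref{ecase} giving $\e^\dagger\fst\m$, and by $x^\dagger=1/x\fst\m$), multiplication preserves this by Lemma~\ref{lem:group}(a), and $\Delta$ preserves it because $\supp(x\e\,\nu^\dagger)=\e\cdot\supp(x\,\nu^\dagger)$ with $\supp(x\,\nu^\dagger)\subseteq\{\g:\g^\dagger\fsteq\m\}$ by Lemma~\ref{lem:group}(b) and $\mag(x\e\,\nu^\dagger)=x\e\cdot\mag(\nu^\dagger)\fsteq x\e\cdot\m=1$. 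Then I would show $\M=\langle G\rangle$, where $G$ is the closure of $G_0$ under $\Delta$: for a product $\g=\nu_1\cdots\nu_k$ one has $\g^\dagger=\sum_i\nu_i^\dagger$, so $\supp(x\e\,\g^\dagger)\subseteq\bigcup_i\supp(x\e\,\nu_i^\dagger)$ and the $\Delta$-closure of a monoid is controlled by the $\Delta$-closure of any generating set. By Higman's theorem it then suffices that the set $G$ of small monomials be well ordered, and this is the heart of the matter: one stratifies $G$ by exponential height (bounded on $G_0$ and not raised by $\Delta$) and shows, level by level, that $G$ is obtained from a well-ordered set by finitely many operations of the shape $\bigcup_\fb\supp(\fb')$ (well ordered by Lemma~\ref{lemw:deriv}) interleaved with passage to a generated submonoid (well ordered by Higman)---the bookkeeping being exactly the well-based hereditary-closure argument applied to $\Delta$. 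Granting this, $\M$, hence $\e\M$, is well ordered.

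Finally, as in the last paragraph of the proof of Lemma~\ref{lem:gensubgrid}, I would verify $\widetilde\BB\subseteq\e\M$. We have $\BB=\e\cdot\tfrac{1}{\e}\BB\subseteq\e\M$; $\e\M$ is closed under multiplication since $\e\in\M$ and $\M$ is a monoid; and if $\g_1,\g_2\in\e\M$, writing $\g_1=\e\fa_1$, $\g_2=\e\fa_2$ with $\fa_1,\fa_2\in\M$ and $\fa_1=\nu_1\cdots\nu_k$ with each $\nu_i\in G$, one has $(x\g_1)'\g_2=\g_1\g_2+\e\,\fa_1\,\fa_2\,(x\e\,\g_1^\dagger)$ with $\supp(x\e\,\g_1^\dagger)\subseteq\supp(x\e\,\e^\dagger)\cup\bigcup_i\supp(x\e\,\nu_i^\dagger)\subseteq\M$ (using that $G$ is $\Delta$-closed), so $\supp\big((x\g_1)'\g_2\big)\subseteq\e\M$. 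Since $\widetilde\BB$ is the least set containing $\BB$ and closed under multiplication and under $(\g_1,\g_2)\mapsto\supp\big((x\g_1)'\g_2\big)$, we get $\widetilde\BB\subseteq\e\M$, which is well ordered; hence $\widetilde\BB$ is well ordered.

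The step I expect to be the real obstacle is the well-orderedness of $\M$, equivalently of the $\Delta$-closure $G$. In the grid case it is almost free: a subgrid sits inside the finitely generated group $\GRID^\ebmu$ and, once $x^{-1}$ is adjoined, differentiation never leaves that group, so no iteration is needed. In the well-based case there is no such shortcut---a subgroup of $\G$ generated by a well-ordered set need not be well ordered, e.g.\ $\{x^{-1/n}:n\ge1\}$ has no $\fgt$-greatest element---so one must keep every monomial inside the small cone $\{\g:\g\fsteq\e,\ \g^\dagger\fsteq\m\}$ (guaranteed by Lemmas~\ref{lem:group} and~\ref{lem:closure}) and argue with submonoids and Higman's theorem. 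The delicacy is that $\Delta$ interleaves with multiplication; the reduction to the $\Delta$-closure of a generating set, together with the induction on exponential height, is what forces that iterated closure to be reached through well-ordered stages, and carrying out that induction cleanly---essentially re-proving the relevant case of the well-based hereditary-closure lemma for $\Delta$---is where the real work lies.
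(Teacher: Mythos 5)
Your overall strategy is the same as the paper's: use additivity/Leibniz to decouple the multiplicative closure from the derivative-type closure, so that a single closure under a map of the form $\g\mapsto\supp(x\e\g')$ (your $\Delta\colon\nu\mapsto\supp(x\e\nu^\dagger)$ is the same thing up to the factor $\g$, since $x\e\g'=\g\cdot x\e\g^\dagger$) followed by a single application of Higman suffices, and then verify $\widetilde\BB\subseteq\e\cdot(\text{that set})$. Indeed the paper forms the least set $\BB_1\supseteq\BB\cup\{\e^2\}$ closed under $\g\mapsto\supp(x\e\g')$, divides by $\e$, takes the generated semigroup, and uses the identity $x(\e\m_1\m_2)'=x(\e\m_1)'\m_2+\m_1\,x(\e\m_2)'-x\e'\m_1\m_2$ exactly where you use $(\nu_1\nu_2)^\dagger=\nu_1^\dagger+\nu_2^\dagger$; your final containment argument also matches.

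The genuine gap is at the point you yourself flag and then assume: the well-orderedness of the $\Delta$-closure $G$. This does not follow from the tools you cite. Lemma~\ref{lemw:deriv}, Lemma~\ref{lemw:prod} and Higman show that each \emph{finite stage} of the closure is well ordered, but the closure is an increasing union of such stages, and an increasing union of well-ordered subsets of $\G$ need not be well ordered --- the paper's own remark after Proposition~\ref{super}, with the supports of $T^{[-k]}$, is an example of exactly this failure. Your proposed stratification by exponential height, with the claim that only finitely many rounds of $\bigcup_\fb\supp(\fb')$ interleaved with monoid generation are needed at each level, is plausible and in the right direction, but that claim is precisely the nontrivial content and is nowhere established in your argument. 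The paper does not re-prove it either: it arranges its closure ($\BB_1$, taken before dividing by $\e$ and using ordinary derivatives rather than logarithmic ones) so that it can quote \cite[\Cuii]{edgarc}, which asserts exactly that the least set containing a given well-ordered set and closed under $\g\mapsto\supp(x\e\g')$ is well ordered. To complete your proof you must either invoke that external result (after rewriting your $\Delta$-closure in the $\supp(x\e\g')$ form, which is easy) or actually carry out the height induction you sketch; as written, the central step is asserted rather than proved.
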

\begin{proof}
Let $\BB_1$ be the least set such that
$\BB_1 \supseteq \BB \cup\{\e^2\}$ and
if $\g \in \BB_1$, then $\supp(x\e\g') \subseteq \BB_1$.
Then $\BB_1$ is well ordered by \cite[\Cuii]{edgarc}.
For all $\g \in \BB_1$ we have $\g\fsteq\e$
and $\g^\dagger \fsteq1/(x\e)$ by Lemma~\ref{lem:closure}(c).
Still $\e=\max\BB_1$, $\e^2 \in \BB_1$, and $\supp(x\e\e')\subseteq \BB_1$.

Let $\BB_2 = \e^{-1}\BB_1$.  Then $\BB_2$
is well ordered, $\BB_2 \supseteq \e^{-1}\BB$,
$1 = \max\BB_2$, $\e \in \BB_2$,
$\supp(x\e')\subseteq \BB_2$.
If $\m \in \BB_2$, then $\supp\big(x(\e\m)'\big)\subseteq \BB_2$.

Let $\BB_3$ be the semigroup generated by $\BB_2$.
Then $\BB_3$ is well ordered, $\BB_3 \supseteq \e^{-1}\BB$,
$1 = \max\BB_3$, $\e \in \BB_3$, $\supp(x\e')\subseteq \BB_3$.
From the identity
$$
	x(\e\m_1\m_2)' = x(\e\m_1)'\cdot\m_2 +
	\m_1\cdot x(\e\m_2)' - x\e'\m_1\m_2
$$
we conclude:
if $\m \in \BB_3$, then $\supp\big(x(\e\m)'\big) \subseteq \BB_3$.

Finally, let $\BB_4 = \e\BB_3$.  Then
$\BB_4$ is well ordered,
$\BB_4 \supseteq \BB$, $\e = \max\BB_4$.
Let $\g_1, \g_2 \in \BB_4$.  Then $\g_1/\e, \g_2/\e \in \BB_3$,
so $(\g_1/\e)\cdot(\g_2/\e) \in \BB_3$ and
$\g_1\g_2/\e^2 \in \BB_3$.  Now $\e \in \BB_3$ so
$\g_1\g_2/\e \in \BB_3$ and therefore $\g_1\g_2 \in \BB_4$.
Again let $\g_1,\g_2 \in \BB_4$.  Then
$\g_1/\e, \g_2/\e \in \BB_3$.  So $\supp(x\g_1')\subseteq \BB_3$.
Thus $\supp(x\g_1')\g_2/\e \subseteq \BB_3$ so
$\supp(x\g_1')\g_2 \subseteq \BB_4$.
And $(x\g_1)'\g_2 = \g_1\g_2 + x\g_1'\g_2$,
so we conclude $\supp\big((x\g_1)'\g_2\big) \subseteq \BB_4$.

This shows $\BB_4 \supseteq \widetilde\BB$, and therefore
that $\widetilde\BB$ is well ordered.
\end{proof}

\section{Abel's Equation}
Let $T \in \LP$, $T > x$. \Def{Abel's Equation}
for $T$ is $V(T(x)) = V(x)+1$.  If large positive
$V$ exists satisfying this, then a real iteration group
$\Phi$
may be obtained as $\Phi(s,x) = V^{[-1]}\circ(x+s)\circ V$.
(In general such $\Phi$ will not have common support.)
If $T \in \LP$, $T < x$, then Abel's Equation for $T$ is
$V(T(x)) = V(x)-1$, and then we may similarly write
$\Phi(s,x) = V^{[-1]}\circ(x-s)\circ V$.

We now do this in reverse: Let $\Phi(s,x)$ be of the
form constructed as in Theorem~\ref{thmcrit}, that is,
coefficients defined recursively by
(\ref{eq:subcrecursion}) and~(\ref{eq:crecursion}).
Then we can
use $\Phi$ to get $V$ for Abel's Equation.

\begin{thm}\label{mabel}
{\rm(Moderate Abel)}
Let $T \sim x$, $T > x$ be moderate, and let
$\Phi(s,x) \in \LP$ for all $s \in \R$ be the real iteration
group for $T$ constructed in Theorem~\ref{thmcrit}.
Then
$$
	V(x) := \int \frac{dx}{\Phi_1(0,x)}
$$
is large and positive and satisfies Abel's Equation $V(T(x)) = V(x)+1$.
\end{thm}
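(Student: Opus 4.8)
The plan is to check two things: that $V$ is a well-defined large positive transseries, and that $V(T(x))=V(x)+1$. Throughout, write $\beta(x):=\Phi_1(0,x)$.

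\emph{$V$ is well-defined, large and positive.} Since $T>x$ we have $U>0$, hence $a>0$. From the shape of $\Phi$ in Theorem~\ref{thmcrit}, $\beta(x)=x\sum_{\g\in\BB}\alpha'_\g(0)\,\g$, whose largest monomial is $\e=\max\BB$ with coefficient $\alpha'_\e(0)=a\ne 0$ (recall $\alpha_\e(s)=as$); thus $\beta(x)\sim ax\e$ is nonzero, $1/\beta\in\T$, and $1/\beta\sim 1/(ax\e)$. Since $\T$ is closed under integration, $V=\int dx/\beta$ exists and $V'=1/\beta$. Now $\int(1/(x\e))$ is purely large and positive — it is precisely the transseries used above to describe the shallow and moderate intervals below $\e$ — so, as $a>0$, $V\sim(1/a)\int(1/(x\e))$ is purely large and positive, hence in $\LP$. (Changing the constant of integration affects neither this nor Abel's Equation.)

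\emph{$V$ satisfies Abel's Equation.} Differentiating the iteration-group identity $\Phi(s+t,x)=\Phi(s,\Phi(t,x))$ of {\rm(\ref{eq:defn})} with respect to $s$ and then setting $s=0$ gives (renaming $t$ as $s$)
$$
	\Phi_1(s,x)=\Phi_1\big(0,\Phi(s,x)\big)=\beta\big(\Phi(s,x)\big),\qquad s\in\R .
$$
Set $g(s):=V(\Phi(s,x))$, which makes sense because $\Phi(s,x)\in\LP$. Composition of the fixed transseries $V$ with the family $\Phi(s,x)\sim x$ may be differentiated in $s$ term by term — expand $V(\Phi(s,x))=\sum_n V^{(n)}(x)(\Phi(s,x)-x)^n/n!$, a convergent expansion since $\Phi(s,x)\sim x$, and differentiate termwise — so $g'(s)=V'(\Phi(s,x))\,\Phi_1(s,x)$. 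Using $V'=1/\beta$ and the displayed identity,
$$
	g'(s)=\frac{\Phi_1(s,x)}{\beta(\Phi(s,x))}=1
$$
for all $s$, hence $g(s)=g(0)+s$. With {\rm(\ref{eq:0})} and $\Phi(1,x)=T(x)$ this yields $V(T(x))-V(x)=g(1)-g(0)=1$.

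\emph{The main obstacle} is the parametric chain rule for $\tfrac{d}{ds}V(\Phi(s,x))$ — the one step that is not purely formal — which I would settle by the termwise differentiation just indicated, using the composition machinery of \cite{edgarc}. An alternative that avoids parametric calculus: the transseries chain rule $(V\circ T)'=(V'\circ T)\,T'$ together with {\rm(\ref{eq:pde})} and the displayed identity at $s=1$ (note $\Phi_2(1,x)=T'(x)$) give $\beta(T(x))=T'(x)\,\beta(x)$, so $(V(T(x))-V(x))'=0$ and $V(T(x))=V(x)+C$ for a real constant $C$; by the mean value theorem \cite[\Cmvtii]{edgarc}, $C=\int_x^{T(x)}dy/\beta(y)$ lies between $(T(x)-x)/\beta(x)$ and $(T(x)-x)/\beta(T(x))$, both $\sim 1$ since $T(x)-x\sim ax\e$, $\beta(x)\sim ax\e$, and $\beta(T(x))\sim a\,T(x)\,\e(T(x))\sim ax\e$ by Lemma~\ref{lem:eT}(i); as $C\sim 1$ is a real constant, $C=1$.
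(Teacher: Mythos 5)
Your proposal is correct, and the ``alternative'' you give at the end is essentially the paper's own proof: differentiate the group law in $s$ at $s=0$ to get $\Phi_1(1,x)=\Phi_1(0,T)$, combine with (\ref{eq:pde}) at $s=1$ and $\Phi_2(1,x)=T'$ to obtain $\Phi_1(0,T)=T'\,\Phi_1(0,x)$, hence $(V\circ T)'=V'$ and $V\circ T=V+c$, and then pin $c=1$ by the mean value estimate of $\int_x^T V'$ together with $\e(T)\sim\e$ from Lemma~\ref{lem:eT}(i). Your primary route --- setting $g(s)=V(\Phi(s,x))$, proving $g'(s)\equiv 1$, and integrating from $0$ to $1$ --- is a genuine variant that trades the constant-identification step for a parametric chain rule; but its crux is exactly what you flag: one must show the coefficient functions of $V(\Phi(s,x))$ are differentiable in $s$ and that termwise differentiation of the Taylor/composition expansion is legitimate, which is strictly more than the single differentiation of the group law at $s=0$ that both you and the paper use, and you only sketch it. Since your fallback argument is complete and avoids that machinery, the proposal stands; what the $g'(s)\equiv 1$ route would buy, if the parametric step were written out, is a proof that never needs to evaluate the constant asymptotically. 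Two minor remarks: $V$ need not be \emph{purely} large (small terms of $1/\Phi_1(0,x)$ integrate to small terms), but ``large and positive'' is all that is claimed and your dominant-term computation $V'\sim 1/(ax\e)\fgt 1/x$ gives it, exactly as in the paper; and $\Phi_1(0,T)\sim aT\e(T)$ follows simply because composition with $T\in\LP$ preserves $\fst$, so no extra appeal to Lemma~\ref{lem:eT}(ii),(iii) is needed there.
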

\begin{proof}
Now $V' \sim 1/(ax\e) \fgt 1/x$, so $V \fgt \log x$ is large.
And $V' > 0$ so $V > 0$.  (A large negative transseries has
negative derivative.)

From $\Phi(s+t,x) = \Phi(s,\Phi(t,x))$
take $\partial/\partial s$ then
substitute $t=1, s=0$ to get $\Phi_1(1,x) = \Phi_1(0,T)$.
As constructed, $\Phi_1(s,x) = \Phi_2(s,x)\Phi_1(0,x)$.
So $\Phi_2(1,x)\Phi_1(0,x) = \Phi_1(1,x) = \Phi_1(0,T)$.
Now from $\Phi(1,x) = T$ we have $\Phi_2(1,x) = T'$.
So
$$
	\frac{T'}{\Phi_1(0,T)} = \frac{1}{\Phi_1(0,x)},
$$
or $V'(T)\cdot T' = V'(x)$ so $V(T) = V+c$ for some $c \in \R$.
Now
$$
	V(T) - V(x) = \int_x^T V' = \int_x^T \frac{1}{\Phi_1(0,x)}
	\sim \int_x^T \frac{1}{ax\e}.
$$
By \cite[\Cmvtii]{edgarc}, this integral is between
$$
	\frac{T-x}{ax\e} \sim \frac{ax\e}{ax\e} = 1
	\qquad\text{and}\qquad
	\frac{T-x}{aT\e(T)} \sim \frac{ax\e}{ax\e} = 1.
$$
We used $\e(T) \sim \e$ from Lemma~\ref{lem:eT}(i).
So we have $V(T)-V \sim 1$ and thus $c=1$.
\end{proof}

Now we will consider the deep case.  For $T = x+1 + A$,
consider Abel's Equation $V\circ T = V+1$.  A formal
solution is
\begin{equation}\label{eq:formalabel}
	V = x + A + A\circ T + A\circ T^{[2]} + A\circ T^{[3]} + \cdots .
\end{equation}
But if $T$ is not purely deep, then $A \circ T \fe A$
(Lemma~\ref{lem:eTx}), so series (\ref{eq:formalabel})
does not converge.  We will use the moderate version already proved
(Theorem~\ref{mabel})
to reduce the general case to one where
$A^\dagger \fgt 1$ (Proposition~\ref{reduce})
so that $A\circ T \fst A$
and the series does converge (Proposition~\ref{pdabel}).
But in general it cannot be grid-based
(Example~\ref{not_grid}), so the
final step works only for the well-based version of $\T$.

\begin{pr}\label{reduce}
Let $T \sim x$, $T > x$.  There exists large positive $V$ such that
$V \circ T \circ V^{[-1]} = x + 1 + B$ and
$\g^\dagger \fgt 1$ for all $\g \in \supp B$; that is,
$x+1+B$ is purely deep.
Let $0 < S < x$, $S \fgt 1$.  There exists $V \in \LP$ such that
$V \circ S \circ V^{[-1]} = x - 1 + C$ and
$\g^\dagger \fgt 1$ for all $\g \in \supp C$.
\end{pr}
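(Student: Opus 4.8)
The plan is to conjugate away the ``moderate part'' of $T$, using the Moderate Abel theorem (Theorem~\ref{mabel}) on what is left over. Write $T=x(1+U)$ with $U\fst1$, $U\sim a\e$, $\e\in\G$, $\e\fst1$; since $T>x$, $a>0$. I would split $\supp U$ into the moderate monomials $\SET{\g}{\g^\dagger\fsteq1/(x\e)}$ and the deep ones $\SET{\g}{\g^\dagger\fgt1/(x\e)}$, and set $T_1=x(1+U_1)$, $D=xU_2$ accordingly, so that $T=T_1+D$. Then $T_1\sim x$, $T_1>x$, $\max\supp U_1=\e$ (as $\e^\dagger\fst1/(x\e)$, Remark~\ref{ecase}), and every ratio of $T_1$ lies in the moderate interval below $\e$; hence $T_1$ is shallow or moderate, so Abel's equation $V\circ T_1=V+1$ has a solution $V\in\LP$ with $V'=1/\Phi_1(0,x)\sim1/(ax\e)$, $\Phi$ the iteration group of $T_1$ --- Theorem~\ref{mabel} when $T_1$ is moderate, the same argument with the iteration group of Theorem~\ref{thmsubcrit} when $T_1$ is shallow. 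If $D=0$ then $V\circ T\circ V^{[-1]}=x+1$ and we are done with $B=0$; otherwise $T$ is deep, and I set $\tilde T:=V\circ T\circ V^{[-1]}$.

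Since composition is right-distributive over addition, $T\circ V^{[-1]}=(T_1\circ V^{[-1]})+(D\circ V^{[-1]})$, and $D\circ V^{[-1]}$ is infinitesimal relative to $T_1\circ V^{[-1]}\sim V^{[-1]}$ (because $D\fst x\e\fss x\sim T_1$, and composition with the large positive transseries $V^{[-1]}$ preserves infinitesimality). So the Taylor expansion of $V$ about $T_1\circ V^{[-1]}$ converges, and using $V\circ T_1\circ V^{[-1]}=x+1$,
\[
\tilde T=\sum_{n\ge0}\frac{V^{(n)}(T_1\circ V^{[-1]})}{n!}\,(D\circ V^{[-1]})^n=x+1+B,\qquad B:=\sum_{n\ge1}\frac{V^{(n)}(T_1\circ V^{[-1]})}{n!}(D\circ V^{[-1]})^n.
\]
The $n=1$ term of $B$ is $\sim\frac1a\bigl(D/(x\e)\bigr)\circ V^{[-1]}$, which is infinitesimal, so $B\fst1$ and $x+1+B$ has first ratio $x^{-1}$.

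The heart is to show $\g^\dagger\fgt1$ for every $\g\in\supp B$. The engine is a chain rule for logarithmic derivatives: for a transmonomial $\n\neq1$,
\[
(\n\circ V^{[-1]})^\dagger=\frac{\n^\dagger\circ V^{[-1]}}{V'\circ V^{[-1]}}\sim a\,\bigl((x\e\,\n^\dagger)\circ V^{[-1]}\bigr),
\]
so, since composition with $V^{[-1]}\in\LP$ preserves and reflects the asymptotic classes ``$\fst1$'', ``$\fe1$'', ``$\fgt1$'' (and using Lemma~\ref{lem:logder}(c)), the monomial $\mag(\n\circ V^{[-1]})$ has $\dagger\fgt1$ iff $\n^\dagger\fgt1/(x\e)$ (i.e.\ $\n$ deep relative to $\e$), and $\dagger\fsteq1$ iff $\n$ moderate relative to $\e$. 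Now $\supp B\subseteq\bigcup_{n\ge1}\supp\bigl(V^{(n)}(T_1\circ V^{[-1]})\bigr)\cdot\bigl(\supp(D\circ V^{[-1]})\bigr)^{\,n}$. Every monomial of $D\circ V^{[-1]}$ is $\mag(\n\circ V^{[-1]})$ with $\n=x\g$, $\g\in\supp U_2$, so $\n^\dagger\sim\g^\dagger\fgt1/(x\e)$ and the chain rule gives $\dagger\fgt1$ (and the monomial is $\fst1$, so has negative logarithmic derivative). Every monomial of $V^{(n)}(T_1\circ V^{[-1]})$ is $\mag(\m\circ V^{[-1]})$ with $\m\in\supp V^{(n)}$; and $\supp V^{(n)}\subseteq\SET{\m}{\m^\dagger\fsteq1/(x\e)}$ because $\supp V'=\supp\bigl(1/\Phi_1(0,x)\bigr)$ lies in the subgroup $\SET{\m}{\m^\dagger\fsteq1/(x\e)}$ of $\G$ (Theorem~\ref{thmcrit}, Lemma~\ref{lem:group}), this subgroup is stable under differentiation (Lemma~\ref{lem:logder}(g), Lemma~\ref{lem:group}(c)) and under antidifferentiation of its monomials, and $T_1\circ V^{[-1]}\sim V^{[-1]}$; so by the chain rule each such monomial has $\dagger\fsteq1$. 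Finally $\SET{\g\in\G}{\g^\dagger\fsteq1}$ is a subgroup (Lemma~\ref{lem:group}), and a product of its elements with finitely many monomials of $\dagger\fgt1$ of a common sign again has $\dagger\fgt1$; since every term of $B$ carries at least one $D\circ V^{[-1]}$-factor, $\g^\dagger\fgt1$ for all $\g\in\supp B$, i.e.\ $x+1+B$ is purely deep. The argument is the same in the grid-based and well-based versions.

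For the second assertion, given $0<S<x$, $S\fgt1$ --- so $S\sim x$ in the case at hand, or else first replace $S$ by $\log_k\circ S\circ\exp_k\sim x$ for large $k$ and conjugate the resulting $V$ by $\log_k$ --- I would run the same argument with signs reversed: write $S=x(1-a\e+\cdots)$, $a>0$, truncate to the moderate part $S_1$, solve the $S<x$ form $V\circ S_1=V-1$ of Abel's equation (Theorem~\ref{mabel} with signs reversed, Lemma~\ref{lem:eTx} replacing Lemma~\ref{lem:eT}), and conjugate to get $V\circ S\circ V^{[-1]}=x-1+C$ with $\supp C$ deep by the identical support tracking; alternatively, apply the first assertion to $S^{[-1]}>x$ and take compositional inverses. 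The step I expect to be the main obstacle is precisely this support tracking --- confirming that $\supp V$, hence $\supp V^{(n)}$ and the supports of the composed pieces, stays inside the moderate subgroup $\SET{\m}{\m^\dagger\fsteq1/(x\e)}$, for which the differentiation half is Lemma~\ref{lem:logder}(g) but the antidifferentiation half needs its own argument --- together with the sign/cancellation bookkeeping for the deep factors in the products.
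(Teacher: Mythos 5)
Your first step coincides with the paper's: split $T=x(1+a\e+A_1+A_2)$ into the moderate-plus-shallow part $T_1$ and the deep remainder, and solve Abel's equation $V\circ T_1=V+1$ with $V'\sim 1/(ax\e)$ via the construction behind Theorem~\ref{mabel} (the paper, like you, silently includes the shallow case there). But from that point the paper takes a much lighter route than your Taylor expansion with support tracking: it writes $V\circ T-V\circ T_1=\int_{T_1}^{T}V'$, estimates this integral by the transseries mean value theorem \cite[\Cmvtii]{edgarc} to see that $B_1:=V\circ T-V\circ T_1$ is small with $B_1^\dagger\fgt 1/(x\e)$, sets $B:=B_1\circ V^{[-1]}$, and then uses the chain rule $B_1^\dagger=(B^\dagger\circ V)\cdot V'$ together with $V'\fe 1/(x\e)$ to conclude $B^\dagger\fgt 1$. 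That is enough: since $B\fst 1$, every $\g\in\supp B$ satisfies $\g\fsteq\mag B\fst 1$, so by Lemma~\ref{lem:logder}(e) $\g^\dagger\fgteq(\mag B)^\dagger\fgt 1$. This order-reversal observation is the point you missed, and it is precisely what makes any monomial-by-monomial analysis of $\supp B$ unnecessary.

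Because you do attempt that monomial-by-monomial analysis, your proposal has a genuine gap where you yourself locate ``the main obstacle.'' The assertion that every monomial of $V^{(n)}(T_1\circ V^{[-1]})$ is $\mag(\m\circ V^{[-1]})$ with $\m\in\supp V^{(n)}$ is not correct: the inner argument is $T_1\circ V^{[-1]}$, not $V^{[-1]}$, and, more importantly, the support of a composition $\m\circ W$ is in general an infinite set, not the single monomial $\mag(\m\circ W)$; the same applies to $D\circ V^{[-1]}$. Your chain-rule computation controls only the dominant monomial of each composite, so the smaller monomials of all these compositions --- exactly the ones that could spoil ``purely deep'' in a term-by-term argument --- are unaccounted for, and controlling them would require the full composition machinery you do not invoke. (Two smaller remarks: the antidifferentiation worry is a red herring, since the $n=0$ Taylor term collapses to $(V\circ T_1)\circ V^{[-1]}=x+1$ and only $V^{(n)}$ with $n\ge 1$, i.e.\ derivatives of $V'=1/\Phi_1(0,x)$, enter; and for the second assertion your ``take inverses'' alternative is the paper's route, but one must still check that $(x+1+B)^{[-1]}=x-1+C$ has $C$ purely deep, which the paper gets from $C=-B\circ(x-1+C)$ and $C^\dagger=(B^\dagger\circ(x-1+C))\cdot(x-1+C)'\fgt 1$.)
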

\begin{proof}
Write $T = x(1 + a\e + A_1 + A_2)$, where

(i) $\g \fst \e, \g^\dagger \fsteq 1/(x\e)$ for all $\g \in \supp A_1$,

(ii) $\g \fst 1, \g^\dagger \fgt 1/(x\e)$ for all $\g \in \supp A_2$.

\noindent
So $T_1 = x(1+a\e+A_1)$ is the moderate part of $T$ (including the shallow
part), and $T - T_1 = xA_2$ is the deep part of $T$. 
By Theorem~\ref{mabel}, there
is large positive $V \fgt \log x$ so that
$V \circ T_1 = V+1$ and $V' \sim 1/(ax\e)$.
So compute
$$
	V \circ T - V \circ T_1 =
	\int_{T_1}^T V' \fgt \int_{T_1}^T\frac{1}{x} .
$$
Now $T-T_1 = A_2$, so this integral is between
$A_2/T_1 \sim A_2/x$ and $A_2/T \sim A_2/x$.
So $B_1 := V \circ T - V \circ T_1 \fst A_2$
and
$$
	V\circ T = V\circ T_1 + B_1 = V + 1 + B_1 
$$
with $B_1^\dagger \fgt 1/(x\e)$.  So write
$B = B_1\circ V^{[-1]}$ to get
$V \circ T \circ V^{[-1]} = x + 1 + B$ and
$B_1^\dagger = (B\circ V)^\dagger = (B^\dagger \circ V)\cdot V'$.
But $V' \fe 1/(x\e)$ and $B_1^\dagger \fgt 1/(x\e)$, so
$B^\dagger \fgt 1$.

Now let $0<S<x$, $S \fgt 1$.  Then define $T := S^{[-1]}$ to get
$T>x$, $T \fgt 1$.  So as we have just seen, there is
$V$ with $V\circ T \circ V^{[-1]} = x + 1 + B$.
Take the inverse to get
\begin{align*}
	V\circ S \circ V^{[-1]} = \big(V\circ T \circ V^{[-1]}\big)^{[-1]}
	= (x + 1 + B)^{[-1]} .
\end{align*}
So if $(x + 1 + B)^{[-1]} = (x-1+C)$, we must show $C^\dagger \fgt 1$.
Now $(x+1+B)\circ(x-1+C) = x$, so
$x-1+C+1+B\circ(x-1+C)=x$ and therefore
$C = -B\circ(x-1+C)$, so
$$
	C^\dagger = (B^\dagger\circ(x-1+C))\cdot(x-1+C)'
	\fgt 1\cdot 1 = 1,
$$
as required.
\end{proof}

The following proof is only for the well-based version of $\T$.
In Example~\ref{not_grid}, below, we see it fails in general
for the grid-based version of $\T$.

\begin{pr}\label{pdabel}
{\rm \fbox{W} (Purely Deep Abel)}
{\rm(a)}~Let $T = x+1+A$, $A \in \T$, $A \fst 1$, $A^\dagger \fgt 1$.
There is $V = x+B$, $B \in \T$,  $B \fst 1$,
$B^\dagger \fgt 1$, such that
$V\circ T = V+1$.
{\rm(b)}~Let $T = x-1+A$, $A \in \T$, $A \fst 1$, $A^\dagger \fgt 1$.
There is $V = x+B$, $B \in \T$,  $B \fst 1$,
$B^\dagger \fgt 1$ such that
$V\circ T = V-1$.
\end{pr}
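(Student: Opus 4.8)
The plan is to solve Abel's equation by the classical formal series $V=x+\sum_k A\circ T^{[k]}$ and to prove that this series is summable in the well-based transseries.

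\emph{The ansatz.} For part (a) put, formally,
$$
	V = x + \sum_{k=0}^{\infty} A\circ T^{[k]}, \qquad B = V-x = \sum_{k=0}^{\infty} A\circ T^{[k]} .
$$
Granting that $\{A\circ T^{[k]}\}_{k\ge0}$ is a summable family, the functional equation is just a re-indexing:
$V\circ T = T + \sum_{k\ge0}(A\circ T^{[k]})\circ T = (x+1+A) + \sum_{j\ge1}A\circ T^{[j]} = x+1+\sum_{j\ge0}A\circ T^{[j]} = V+1$.
Moreover $\mag(A\circ T^{[k]}) = (\mag A)(T^{[k]}) \fst \mag A$ for $k\ge1$ (from the orbit estimate below), so $\mag B = \mag A$; hence $B\fst1$, and by Lemma~\ref{lem:logder}(c), $B^\dagger \sim A^\dagger \fgt 1$. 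So the statement reduces to summability of $\{A\circ T^{[k]}\}_k$.

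\emph{Orbits of monomials.} Since $A\fst1$ and $A^\dagger\fgt1$, every $\g\in\supp A$ has $\g^\dagger\fgt1$: indeed $\g\fsteq\mag A$ gives $\g^\dagger\fgteq(\mag A)^\dagger$ by Lemma~\ref{lem:logder}(e), while $(\mag A)^\dagger\fe A^\dagger\fgt1$ by Lemma~\ref{lem:logder}(c). Now $T\sim x$, so its first ratio is $1/x$ and $1/\bigl(x\cdot(1/x)\bigr)=1$. By Lemma~\ref{lem:eT}(iv), $\g(T)\fst\g$ for each such $\g$; and by the chain rule $(\g\circ T)^\dagger = (\g^\dagger\circ T)\cdot T'$ with $T'\sim1$ and $\g^\dagger\circ T\fgt1$, so $\g\circ T$ is again deep. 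Iterating, for each $\g\in\supp A$ the orbit $\g\fgt\g(T)\fgt\g(T^{[2]})\fgt\cdots$ is a strictly decreasing sequence of deep monomials. Also, for each fixed $k$, the map $\g\mapsto\g\circ T^{[k]}$ is an order-preserving injection of $\G$ into itself (composition by $T^{[k]}\in\LP$), so $\supp(A\circ T^{[k]})$ is order-isomorphic to a subset of $\supp A$, hence well ordered, and $\bigcup_k\supp(A\circ T^{[k]})$ is contained in $\mathcal S := \{\,\g(T^{[k]}) : \g\in\supp A,\ k\ge0\,\}$.

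\emph{Summability.} Two things must be checked: each monomial lies in $\supp(A\circ T^{[k]})$ for only finitely many $k$, and $\mathcal S$ is well ordered. For the first: if $\m\in\supp(A\circ T^{[k]})$ then $\g:=\m(T^{[-k]})\in\supp A$, and $T^{[-1]}=x-1+A_1$ with $A_1\fst1$; by Lemma~\ref{lem:eTx}(iv), together with the same chain-rule preservation of deepness, the pullback orbit $\g\fst\g(T^{[-1]})\fst\g(T^{[-2]})\fst\cdots$ is strictly ascending, so it meets the well-ordered set $\supp A$ only finitely often, which bounds the admissible $k$. For the second: a putative infinite ascending chain $\m_0\fst\m_1\fst\cdots$ in $\mathcal S$, written $\m_i=\g_i(T^{[k_i]})$, must either have some value of $k_i$ recurring infinitely — and then order-preservation of $\g\mapsto\g\circ T^{[k_i]}$ yields an infinite ascending chain in $\supp A$, impossible — or have $k_i\to\infty$, in which case one uses the quantitative estimate that each step of an orbit moves by a purely large amount, $-\log\g(T^{[k+1]}) + \log\g(T^{[k]}) \fe \g(T^{[k]})^\dagger \fgt 1$, combined with the well-ordering of $\supp A$, to force a contradiction. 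This bookkeeping on orbits is the main obstacle, and it is precisely here that the well-based setting is needed; Example~\ref{not_grid} shows no grid can support such a $V$.

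\emph{Part (b).} From $T\circ T^{[-1]}=x$ one computes $T^{[-1]} = x+1+A_1$ with $A_1 = -A\circ T^{[-1]}$, where $A_1\fst1$ (since $\g\circ T^{[-1]}\fst\g\fst1$ for $\g\in\supp A$ by Lemma~\ref{lem:eT}(iv)) and, by the chain rule, $A_1^\dagger = (A^\dagger\circ T^{[-1]})\cdot(T^{[-1]})' \fgt 1$. Applying part (a) to $T^{[-1]}$ produces $V=x+B$ with $B\fst1$, $B^\dagger\fgt1$, and $V\circ T^{[-1]} = V+1$; composing this identity on the right with $T$ gives $V\circ T = V-1$, as required.
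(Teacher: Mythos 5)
Your overall strategy (write down the classical series $V=x+\sum_k A\circ T^{[k]}$ and prove it is summable) is a legitimate alternative to what the paper does, and your part (b) and your reduction of the functional equation to summability are fine. But the summability argument itself has a genuine gap, and it sits exactly at the hard point. You treat $\g\circ T^{[k]}$, for $\g\in\supp A$, as if it were a monomial: you claim that $\g\mapsto\g\circ T^{[k]}$ is an order-preserving injection of $\G$ into itself, that $\supp(A\circ T^{[k]})$ is order-isomorphic to a subset of $\supp A$, that $\bigcup_k\supp(A\circ T^{[k]})\subseteq\mathcal S=\{\g(T^{[k]})\}$, and that $\m\in\supp(A\circ T^{[k]})$ forces $\m(T^{[-k]})\in\supp A$. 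None of this is correct: composing a monomial with a transseries produces a transseries with (typically infinite) support, e.g.\ $e^{-x}\circ T=e^{-1}e^{-x}e^{-A}=e^{-1}e^{-x}(1-A+\tfrac{A^2}{2}-\cdots)$, so $\supp(A\circ T^{[k]})$ is a union of whole supports $\supp(\g\circ T^{[k]})$ and is not parametrized by $\supp A$. Consequently both halves of your summability check (each monomial occurs in only finitely many terms; no infinite ascending chain in the union of supports) are unproved, and the decreasing magnitudes $A\fgt A\circ T\fgt A\circ T^{[2]}\fgt\cdots$ alone do not give summability in the well-based setting. You in effect acknowledge this when you say the $k_i\to\infty$ case is ``the main obstacle'' and gesture at a quantitative estimate without carrying it out.

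The paper avoids this bookkeeping entirely. It first confines all supports to a fixed $\G_{N,M}$ (composition preserves this, by the cited result \BcompoNM\ of \cite{edgar}), lets $\DD$ be the deep monomials there, sets $\SA=\SET{x+B}{\supp B\subseteq\DD}$, and considers the operator $\Psi(Y)=Y\circ T-1$. Using Lemma~\ref{lem:eT}(iv) it shows $\Psi$ maps $\SA$ into $\SA$ and is contractive, since $\Psi(x+B_1)-\Psi(x+B_2)=(B_1-B_2)\circ T\fst B_1-B_2$, and then invokes van der Hoeven's well-based contraction theorem \cite[Thm~4.7]{hoevenop} to produce the fixed point $V$. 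That theorem is precisely the tool that packages the summability/convergence issues you are trying to verify by hand. If you want to keep your explicit-series route, you would need either to prove the noetherianity of the family $\{A\circ T^{[k]}\}$ honestly (controlling the full supports $\supp(\g\circ T^{[k]})$, not just the orbit of leading monomials), or to cite a fixed-point or summation theorem of this kind --- at which point you have essentially reconstructed the paper's proof.
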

\begin{proof}
(a) There exist $N,M \in \N$ so that
$\supp A \subset \G_{N,M}$.  Increase $N$ if necessary so that
$N \ge M$ and $x \in \G_{N,M}$.
Now $\e = x^{-1}$, so the deep monomials are
$\DD = \SET{\g \in \Gsmall_{N,M}}{\g^\dagger \fgt 1}$.
I claim: if $B \in \T$, $\supp B \subseteq \DD$,
then $B(T) \fst B$ and
$\supp B(T) \subseteq \DD$.  Indeed, all $\g \in \supp B$
satisfy $\g(T) \fst \g$ by Lemma~\ref{lem:eT}(iv), and we may sum
to conclude $B(T) \fst B$.  Therefore
$B(T)^\dagger \fgteq B^\dagger \fgt 1$.
And $\supp B(T) \subseteq \G_{N,M}$ by \cite[\BcompoNM]{edgar}.

Let $\SA = \SET{x+B \in \T}{\supp B \subseteq \DD}$.
Define $\Psi$ by $\Psi(Y) := Y\circ T - 1$.
We want to apply a fixed point argument to $\Psi$.
First we must show that $\Psi$ maps $\SA$ into $\SA$.
Let $x+B \in \SA$.  So $\Psi(x+B) = T + B\circ T - 1
= x + 1 + A + B(T) - 1$.  But $\supp A \subseteq \DD$,
$\supp B \subseteq \DD$ so $\supp B(T) \subseteq \DD$,
and thus $x + A+B(T) \in \SA$.

Suppose $x+B_1, x+B_2 \in \SA$.  Then
$\supp(B_1 - B_2) \subseteq \DD$ and
\begin{align*}
	\Psi(x+B_1)-\Psi(x+B_2) &=
	(T + B_1\circ T - 1) - (T + B_2\circ T - 1)
	\\ &=
	(B_1 - B_2)\circ T \fst B_1 - B_2 .
\end{align*}
So $\Psi$ is contractive.

Now we are ready to apply the well based
contraction theorem \cite[Thm~4.7]{hoevenop}.
In our case where
$\G$ is totally ordered, the dotted ordering
$\;\fst\!\!\!\cdot\;$ of \cite{hoevenop}
coincides with the usual ordering $\;\fst\;$.
There is $V \in \SA$ such that $\Psi(V) = V$.
This is what was required.

Part (b) is proved from part (a) as before:
Begin with $T = x - 1 + A$ purely deep, then
$T^{[-1]} = x + 1 + A_1$ also purely deep, from
part (a) get $V = x + B$ with $V\circ T^{[-1]} = V+1$,
so compose with $T$ on the right to get
$V = V\circ T+1$ as desired.
\end{proof}

Because they depend on Proposition~\ref{pdabel}, the following
two results are
also valid only for the well-based version of $\T$.

\begin{thm}\label{thm:abel}
{\rm\fbox{W} (General Abel)}
Let $T \in \LP$ with $\expo T = 0$.
Then there is $V \in \LP$ such that:
{\rm(i)}~If $T>x$, then $V\circ T\circ V^{[-1]} = x+1$;
{\rm(ii)}~If $T<x$, then $V\circ T\circ V^{[-1]} = x-1$.
\end{thm}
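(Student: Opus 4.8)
The plan is to chain together the reductions already in place: strip logarithms by conjugating with $\exp_k$, peel off the non-purely-deep part with Proposition~\ref{reduce}, and finish with Proposition~\ref{pdabel}. First I would reduce case (ii) to case (i). Since $\expo T^{[-1]} = -\expo T = 0$ and $T < x$ is equivalent to $T^{[-1]} > x$, if we produce $V \in \LP$ with $V\circ T^{[-1]}\circ V^{[-1]} = x+1$, then taking compositional inverses gives $V\circ T\circ V^{[-1]} = (x+1)^{[-1]} = x-1$, which is (ii). (Alternatively one can run the same argument using the second halves of Propositions~\ref{reduce} and~\ref{pdabel}.) So it suffices to treat $T > x$.

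For case (i), Step~1 is to pass to a log-free transseries asymptotic to $x$: since $\expo T = 0$, by \cite[\Cklargelogfree]{edgarc} there is $k \in \N$ with $S := \log_k\circ\,T\circ\exp_k \sim x$ and $S$ log-free, and $S > x$ because $\log_k,\exp_k$ are increasing and $T > x$. As $\LP$ is a group under composition, it is enough to find $W \in \LP$ with $W\circ S\circ W^{[-1]} = x+1$; then $V := W\circ\log_k \in \LP$ satisfies $V\circ T\circ V^{[-1]} = x+1$, with $V^{[-1]} = \exp_k\circ W^{[-1]}$, by associativity of $\circ$. Step~2 applies Proposition~\ref{reduce} to $S$: there is $V_1 \in \LP$ with $V_1\circ S\circ V_1^{[-1]} = x+1+B$ where $\g^\dagger \fgt 1$ for all $\g \in \supp B$, i.e.\ $x+1+B$ is purely deep (and $B$ may be $0$, e.g.\ when $S$ is shallow). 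Note $x+1+B = x(1 + x^{-1} + B/x)$ has first ratio $\e = x^{-1}$ since $B/x \fst 1/x$, matching the hypotheses of Proposition~\ref{pdabel}. Step~3 applies Proposition~\ref{pdabel}(a) to $x+1+B$: there is $V_2 = x+B'$ with $B' \fst 1$ and $V_2\circ(x+1+B) = V_2+1$, hence $V_2\circ(x+1+B)\circ V_2^{[-1]} = (V_2+1)\circ V_2^{[-1]} = x+1$, and $V_2 \in \LP$ because $V_2 \sim x$. Setting $W := V_2\circ V_1$ gives $W\circ S\circ W^{[-1]} = x+1$, finishing (i) and hence the theorem.

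There is no genuinely hard step here; essentially all of the substance lives in Proposition~\ref{reduce} (the Moderate-Abel reduction) and Proposition~\ref{pdabel} (the Purely-Deep-Abel fixed point). The only things to check are routine: that each composite and inverse stays in $\LP$ (immediate from the group property cited in the Review), that the substitution $S = \log_k\circ T\circ\exp_k$ threads through the conjugations by associativity, and that the passage $T>x \Leftrightarrow S>x$ and $T>x \Leftrightarrow T^{[-1]}<x$ is respected. The one point worth a sentence is that the shallow and moderate (non-deep) sub-cases are not special: Proposition~\ref{reduce} simply returns $x+1+B$ with $B=0$, and then Proposition~\ref{pdabel} is applied trivially with $V_2 = x$. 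The reliance on Proposition~\ref{pdabel} is precisely why the theorem is marked \fbox{W}.
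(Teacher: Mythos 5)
Your proposal is correct and follows essentially the same route as the paper: conjugate by iterated logarithms to reach a (log-free) transseries asymptotic to $x$, apply Proposition~\ref{reduce} to make it purely deep, apply Proposition~\ref{pdabel} to solve Abel's equation, and compose the conjugating maps, with case (ii) handled by the symmetric/inverse argument. Your extra care about log-freeness and the explicit reduction of (ii) to (i) are fine refinements of what the paper leaves as ``(ii) is similar.''
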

\begin{proof}
(i) First, there is $V_1$ so that
$T_1 := V_1\circ T \circ V_1^{[-1]} \sim x$.  By
Proposition~\ref{reduce}, there is $V_2$ so that
$T_2 := V_2\circ T_1 \circ V_2^{[-1]} = x + 1 + B$
with $B^\dagger \fst 1$.  By Proposition~\ref{pdabel}
there is $V_3$ so that
$V_3\circ T_2 \circ V_3^{[-1]} = x+1$.
Define $V = V_3\circ V_2\circ V_1$ to get
$V\circ T \circ V^{[-1]} = x+1$.

(ii) is similar.
\end{proof}

\begin{co}\label{realiter}
{\rm\fbox{W}}
Let $T \in \LP$ with $\expo T = 0$.
Then there exists real iteration group $\Phi(s,x)$
for $T$.
\end{co}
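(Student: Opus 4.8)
The plan is to deduce this directly from the General Abel theorem (Theorem~\ref{thm:abel}) by conjugating the translation flow, exactly as indicated in the remarks opening Section~4. Since $\T$ is an ordered field, $T$ is comparable with $x$, so there are three cases. If $T = x$, then $\Phi(s,x) := x$ is trivially a real iteration group. If $T > x$, Theorem~\ref{thm:abel}(i) furnishes $V \in \LP$ with $V \circ T \circ V^{[-1]} = x+1$, and we set
$$
	\Phi(s,x) := V^{[-1]} \circ (x+s) \circ V = V^{[-1]}\big(V(x)+s\big) .
$$
If instead $T < x$, Theorem~\ref{thm:abel}(ii) gives $V \in \LP$ with $V \circ T \circ V^{[-1]} = x-1$, and we set $\Phi(s,x) := V^{[-1]} \circ (x-s) \circ V$. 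It then remains only to check the three defining identities (\ref{eq:0}), (\ref{eq:1}), (\ref{eq:defn}).

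For the case $T > x$ (the case $T < x$ being identical after replacing $s$ by $-s$), one first checks that the compositions make sense: because $V \in \LP$ we have $V \fgt 1$, so for any real $s$ the transseries $V+s$ still satisfies $V+s \fgt 1$ and $V+s > 0$, i.e.\ $V+s \in \LP$; hence $V^{[-1]} \circ (V+s)$ is defined, and since $\LP$ is a group under composition with $V^{[-1]} \in \LP$, the result $\Phi(s,x)$ again lies in $\LP$. Identity (\ref{eq:0}) is $\Phi(0,x) = V^{[-1]} \circ V = x$. Identity (\ref{eq:1}) is $\Phi(1,x) = V^{[-1]} \circ (x+1) \circ V = V^{[-1]} \circ \big(V \circ T \circ V^{[-1]}\big) \circ V = T$. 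Identity (\ref{eq:defn}) follows from associativity of composition together with $(x+s) \circ (x+t) = x+s+t$:
$$
	\Phi\big(s,\Phi(t,x)\big)
	= V^{[-1]} \circ (x+s) \circ V \circ V^{[-1]} \circ (x+t) \circ V
	= V^{[-1]} \circ (x+s+t) \circ V
	= \Phi(s+t,x) .
$$

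The substantive work has already been carried out in Theorem~\ref{thm:abel} (which in turn rests on Propositions~\ref{reduce} and~\ref{pdabel}); the present corollary is only the observation that a large positive solution of Abel's equation converts the translation iteration group $x \mapsto x+s$ into one for $T$. Accordingly I do not anticipate a genuine obstacle: the only points needing a word of care are the degenerate case $T=x$ and the verification that $V+s \in \LP$, so that the conjugating compositions are legitimate. Finally, since the construction invokes Theorem~\ref{thm:abel}, the corollary inherits its \fbox{W} restriction and is asserted for the well-based version of $\T$.
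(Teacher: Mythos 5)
Your proposal is correct and follows essentially the same route as the paper: apply Theorem~\ref{thm:abel} to get $V$ and conjugate the translation group, setting $\Phi(s,x) = V^{[-1]}\circ(x\pm s)\circ V$. The extra verifications you supply (the trivial case $T=x$, that $V+s \in \LP$, and the three identities) are routine and consistent with what the paper leaves implicit.
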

\begin{proof}
In case $T>x$, let $V$ be as in Theorem~\ref{thm:abel}(i),
then take $\Phi(s,x) = V^{[-1]}\circ(x+s)\circ V$.
In case $T<x$, let $V$ be as in Theorem~\ref{thm:abel}(ii),
then take $\Phi(s,x) = V^{[-1]}\circ(x-s)\circ V$.
\end{proof}

\begin{qu}
The proof as given here depends on the existence of inverses
in $\LP$. Is it possible
to demonstrate first the solution to
Abel's Equation without assuming the existence
of inverses, then use that to construct inverses?
\end{qu}

\begin{ex}\label{ex2abel}
Take the example $T = x + 1 + xe^{-x^2}$ of
Proposition~\ref{super}.  Carrying out the
iteration of Proposition~\ref{pdabel},
we get $V$ satisfying $V(T(x)) = V(x)+1$ which looks like:
\begin{align*}
	V = &\;x
	\\ & + e^{-x^2}\Big(x + e^{-2x}(x+1)e^{-1} + 
	e^{-4x}(x+2)e^{-4} + e^{-6x}(x+3)e^{-9x} +\cdots\Big)
	\\ & + e^{-2x^2}\Big(e^{-2x}(-x-4x^2-2x^3)e^{-1}
	+e^{-4x}(x-4x^2-2x^3)e^{-4}
	\\ &\qquad\qquad +e^{-6x}\big[(7x-4x^2-2x^3)e^{-9}
	+(-7-15x-10x^2-2x^3)e^{-5}\big]+\cdots\Big)
	\\ & + e^{-3x^2}\Big(e^{-2x}(-x^2+3x^3+6x^4+2x^5)e^{-1}
	+e^{-4x}(-2x^2-3x^3+4x^4+2x^5)e^{-4}
	\\ &\qquad\qquad +e^{-6x}\big[(+5x^2-13x^3+2x^4+2x^5)e^{-9}
	\\ &\qquad\qquad\qquad
	+(-x+38x^2+74x^3+44x^4+8x^5)e^{-5}\big]+\cdots\Big)
	\\ & + e^{-4x^2}\Bigg(e^{-2x}\left(\frac{5}{3}x^3+\frac{8}{3}x^4-4x^5
	-\frac{16}{3}x^6-\frac{4}{3}x^7\right)e^{-1}
	\\ &\qquad\qquad +e^{-4x}\left(-x^3+4x^4+4x^5-\frac{8}{3}x^6
	-\frac{4}{3}x^7\right)e^{-4}
	+\cdots\Bigg)
	\\ & + \cdots
\end{align*}
The support is a subgrid of order type $\omega^2$.
\end{ex}

Of course, once we have $V$ we can compute the real iteration group
$\Phi(s,x) = V^{[-1]}\circ(x+s)\circ V$.
For $s$ negative we get
$$
	\Phi(s,x) = x+s-xe^{-(x+s)^2}+\cdots,
$$
so they are not contained in a common grid (or well ordered set),
as noted before.  But since $V$ is grid-based, all of the
fractional iterates $T^{[s]}$ are also grid-based.

\subsection*{The Non-Grid Situation}
\begin{ex}\label{not_grid}
Here is an example where Abel's Equation has no grid-based solution.
\begin{equation*}
T = x + 1 + e^{\displaystyle -e^{x^2}} .
\end{equation*}
The support for $V$ where $V\circ T = V+1$ deserves
careful examination.  We will use these notations:
\begin{align*}
	\m_1 &= x^{-1},\qquad \m_1 \fst 1,\qquad \m_1 \in \G_0,
	\\
	\m_2 &= e^{-x},\qquad \m_2 \fst \m_1,\qquad \m_2 \in \G_1,
	\\
	\m_3 &= e^{-x^2},\qquad \m_3 \fst \m_2,\qquad \m_3 \in \G_1,
	\\
	\bmu &= \{\m_1,\m_2,\m_3\} \subset \G_1,
	\\
	L_k &= e^{(x+k)^2} = e^{k^2}\m_2^{-2k}\m_3^{-1},\qquad k = 0,1,2,\cdots,
	\qquad \supp L_k \subset \GRID^\ebmu \subset \G_1,
	\\
	\fa_k &= e^{-L_k},\qquad k=0,1,2,\cdots,
	\qquad \fa_k \in \G_2 ,
	\\
	\fb_k &= x\m_2^{-2k}\m_3^{-1}\fa_k,\qquad
	\ba = \bmu \cup
	\SET{\fb_k}{k=0,1,2,\cdots} \subset \G_2.
\end{align*}
Now $\ba$ is infinite, so it is not a ratio set in the usual sense.
However, writing $\g_1 \fgg \g_2$ iff
$\g_1^k \fgt \g_2$ for all $k \in \N$, we have
$$
	\m_1 \fgg \m_2 \fgg \m_3 \fgg
	\fb_0 \fgg \fb_1 \fgg \fb_2 \fgg \cdots .
$$
The semigroup
generated by $\ba$ is contained in $\G_2$, is
well ordered, and has order type $\omega^\omega$.
Probably the solution $V$ of Abel's Equation also
has support of order type $\omega^\omega$, but to
prove it we would have to verify that many
terms are not eliminated by cancellation.

Computations follow.  When I write
$\o$ and $\O$, the omitted terms all belong to $\G_2$.
\begin{align*}
	T &= x + 1 + \fa_0,
	\\
	T^2 &= x^2 + 2x + 1 + 2x\fa_0 + \O(\fa_0) ,
	\\
	\m_2^{-1}\circ T &= e^{T} = e^{x+1+\fa_0} =
	e \m_2^{-1}e^{\fa_0} = e \m_2^{-1}\big(1+\fa_0+\o(\fa_0)\big)
	\\ &= e \m_2^{-1} + e \m_2^{-1}\fa_0 + \o(\m_2^{-1}\fa_0) ,
	\\
	\m_2^{-2k}\circ T &= e^{2k}\m_2^{-2k} + 2ke^{2k}\m_2^{-2k}\fa_0
	+ \o(\m_2^{-2k}\fa_0) ,
	\\
	\m_3^{-1}\circ T &= e^{T^2} = e^{x^2 + 2x + 1 + 2x\fa_0 + \O(\fa_0)}
	= e \m_2^{-2}\m_3^{-1}e^{2x\fa_0 + \O(\fa_0)}
	\\ &= e \m_2^{-2}\m_3^{-1}\big(1 + 2x\fa_0 + \O(\fa_0)\big)
	\\ &= e \m_2^{-2}\m_3^{-1} + 2e x\m_2^{-2}\m_3^{-1}\fa_0
	+ \O(\m_2^{-2}\m_3^{-1}\fa_0) ,
	\\
	L_k\circ T &= e^{k^2+2k+1}\m_2^{-2k-2}\m_3^{-1} +
	2e^{k^2+2k+1}x\m_2^{-2k-2}\m_3^{-1}\fa_0
	+ \O(\m_2^{-2k-2}\m_3^{-1}\fa_0)
	\\
	\fa_k\circ T &= e^{-L_{k+1}}
	e^{-2e^{(k+1)^2}x\m_2^{-2k-2}\m_3^{-1}\fa_0
	+ \O(\m_2^{-2k-2}\m_3^{-1}\fa_0)}
	\\ &= \fa_{k+1}\big(1-2e^{(k+1)^2}x\m_2^{-2k-2}\m_3^{-1}\fa_0
	+ \O(\m_2^{-2k-2}\m_3^{-1}\fa_0)\big) 
	\\ &= \fa_{k+1} -2e^{(k+1)^2}x\m_2^{-2k-2}\m_3^{-1}\fa_0\fa_{k+1}
	+ \O(\m_2^{-2k-2}\m_3^{-1}\fa_0\fa_{k+1})
	\\ &= \fa_{k+1} -2e^{(k+1)^2}\fa_0\fb_{k+1} + \o(\fa_0\fb_{k+1})
	\\
	\fb_k\circ T &= (x\m_2^{-2k}\m_3^{-1}\fa_k)\circ T
	= \fb_{k+1} + \o(\fb_{k+1}) .
\end{align*}
The solution $V$ of Abel's Equation $V \circ T = V + 1$ is
$$
	V = x + 1 + \fa_0 + \fa_0\circ T + \fa_0\circ T^{[2]} + 
	\fa_0\circ T^{[3]} + \cdots .
$$
Without considering cancellation, we would expect
that its support still
has order type $\omega^\omega$.  Even without trying
to account for cancellation, we know that $\supp V$ contains
$\{\fa_0, \fa_1, \fa_2, \cdots\}$.  The logarithms
$L_k$ are linearly independent, so the group generated
by $\SET{\fa_k}{k \in \N}$ is not finitely generated,
and thus $\supp V$ is not a subgrid. \qed

More computation in this example yields:
$V^{[-1]} = x - 1 - \fa_{-1} +\cdots$ and
$$
	T^{[1/2]} = V^{[-1]} \circ \left(x+\frac{1}{2}\right) \circ V
	= x + \frac{1}{2} + \fa_0 - \fa_{1/2} + \cdots
$$
not grid-based.  We used notation
$\fa_k = \exp\big({-}\exp((x+k)^2)\big)$ for
$k=-1$ and $1/2$.
\end{ex}

Consider the proof of Proposition~\ref{pdabel}.
How much can be done in the grid-based version?
Assume grid-based
$T = x + 1 + A$, $A \in \T$, $A \fst 1$, $A^\dagger \fgt 1$.
Write $\m = \mag A$.  Consider a ratio set $\bmu$ such that
$\supp A \subseteq \m\GRID^{\ebmu,\0}$.
There is \cite[\Wcompomono]{edgarw} a ``$T$-composition addendum''
$\ba$ for $\bmu$ such that:

(i) if $\fa \in \GRID^{\ebmu,\0}$, then
$\supp(\fa\circ T) \subseteq \GRID^{\ba,\0}$;

(ii) if $\fa \fst^\ebmu \fb$, then
$\fa\circ T \fst^\ba \fb\circ T$.

\noindent
But this is not enough to carry out the contraction argument.
We need a ``hereditary $T$-composition addendum''
$\ba \supseteq \bmu$ such that:

(i) if $\fa \in \GRID^{\ba,\0}$, then
$\supp(\fa\circ T) \subseteq \GRID^{\ba,\0}$;

(ii) if $\fa \fst^\ba \fb$, then
$\fa\circ T \fst^\ba \fb\circ T$.

\noindent
For some deep $T$ there is such an addendum, but not for others.
If there is, then a grid-based version of the contraction argument
of Proposition~\ref{pdabel} works.
Or (for purely deep $T$) we can write
$$
	V = x + 1 + A + A\circ T + A\circ T^{[2]} +
	A\circ T^{[3]} + \cdots
$$
with $A \fgt^\ba A\circ T \fgt^\ba  A\circ T^{[2]} \fgt^\ba  \cdots$
to insure grid-based convergence
in the asymptotic topology.

For example, if $\bmu = \{x^{-1},e^{-x},e^{-x^2}\}$
and $\supp A \subseteq \GRID^{\ebmu,\0}$,
then $\ba = \{x^{-1},e^{-x},x^2e^{-x^2}\}$
is a hereditary $T$-composition addendum.
This insures that the iteration used in Example~\ref{ex2abel}
provides a grid-based solution $V$.

\section{Uniqueness}
In what sense is $T^{[s]}$ unique?
This question is related to the question of
commutativity for composition.

\begin{pr}\label{commutex1}
Let $V \in \LP$.  If $V(x+1) = V+1$, then there is $c \in \R$
with $V = x+c$.
\end{pr}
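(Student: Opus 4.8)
The plan is to differentiate the functional equation, use it to force $V'$ to be constant, and then match the additive constant.

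First I would apply the chain rule for transseries composition to $V\circ(x+1)=V+1$. Since $(x+1)'=1$, the left side differentiates to $V'\circ(x+1)$ and the right side to $V'$, so $W:=V'$ satisfies $W\circ(x+1)=W$. I claim this forces $W\in\R$. Consider $W'=V''$. By trichotomy in the ordered field $\T$, either $V''=0$, or $V''>0$, or $V''<0$. Suppose $V''>0$. Then $V'$ has positive derivative and is increasing; quantitatively, $W\circ(x+1)-W=\int_x^{x+1}V''(t)\,dt$, which by the transseries mean value theorem \cite[\Cmvtii]{edgarc} lies between $V''(x)$ and $V''\circ(x+1)$ — both positive, since $V''$ is monotone on that interval (its derivative $V'''$ has a definite sign) and precomposition with the element $x+1\in\LP$ is order preserving and fixes $\R$. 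Hence $W\circ(x+1)-W>0$, contradicting $W\circ(x+1)=W$. The case $V''<0$ is the same with signs reversed. Therefore $V''=0$, so $V'=c$ for some $c\in\R$.

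It remains to integrate and match constants. From $V'=c$ we get $V=cx+d$ for some $d\in\R$, and substituting into $V(x+1)=V(x)+1$ gives $c(x+1)+d=cx+d+1$, i.e.\ $c=1$. Thus $V=x+d$, which is of the asserted form $x+c$.

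The only delicate point is the assertion that a transseries with $V''>0$ (as an element of $\T$) cannot satisfy $V'(x+1)=V'(x)$; I would dispatch it exactly as above through \cite[\Cmvtii]{edgarc}, together with the elementary facts that an antiderivative of a positive transseries is increasing (cf.\ \cite[\Cposderiv]{edgarc}) and that precomposition with $x+1\in\LP$ preserves sign. Note that this argument never really uses $V\in\LP$ beyond making $V\circ(x+1)$ meaningful, so it applies to any $V\in\T$ commuting with $x+1$.
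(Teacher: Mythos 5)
Your proof is correct, but it is routed one derivative higher than the paper's, which is a genuine (if small) difference. The paper applies the mean value theorem \cite[\Cmvtiii]{edgarc} directly to $V$ on the segment from $x$ to $x+1$: $1 = V(x+1)-V(x) = V'\circ S$ for some $S\in\LP$, and composing on the right with $S^{[-1]}$ gives $V'=1$ immediately, hence $V=x+c$. You instead differentiate the functional equation and run the mean-value estimate on $V'$, concluding $V''=0$ and then fixing the constant by substitution. That works, but it costs three extra ingredients the paper's one-liner avoids: the chain rule for right composition, the trichotomy case split, and the fact that the kernel of the derivation on $\T$ is exactly $\R$ (you use it both for ``$V''=0$ implies $V'\in\R$'' and again when integrating $V'=c$); that kernel fact is standard but should be cited rather than passed over. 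One local clean-up: the aside that $V''$ is ``monotone on that interval (its derivative $V'''$ has a definite sign)'' is unnecessary and half-justified --- positivity of $V''\circ(x+1)$ already follows from $V''>0$ because right composition with $x+1\in\LP$ preserves order, which you also state, so drop the monotonicity remark. Your closing observation that the argument only needs $V\in\T$ (since $V\circ(x+1)$ is defined for any $V\in\T$) is a fair point; the hypothesis $V\in\LP$ in the proposition is there because that is how it is used later (conjugation inside the group $\LP$), not because the proof requires it.
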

\begin{proof}
By \cite[\Cmvtiii]{edgarc}, $1 = (V(x+1)-V(x))/1 = V'\circ S$ for some
$S \in \LP$.
Compose on the right with $S^{[-1]}$ to get $1=V'$,
So $V = x+c$ as required.
\end{proof}

\begin{co}\label{abelunique}
Let $T \in \LP$, $T > x$.  The solution $V \in \LP$
of $V\circ T = V+1$ is unique up to a constant addend.
\end{co}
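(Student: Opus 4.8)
The plan is to deduce the corollary from Proposition~\ref{commutex1} by a conjugation trick. Suppose $V_1, V_2 \in \LP$ both solve Abel's Equation, so that $V_i \circ T = V_i + 1 = (x+1)\circ V_i$ for $i=1,2$. Since $\LP$ is a group under composition (recorded in the Review), I would compose each equation on the left with $V_i^{[-1]}$ to present $T$ as a conjugate of the unit translation: $T = V_i^{[-1]}\circ(x+1)\circ V_i$. Equating the two expressions for $T$ and then composing on the left with $V_1$ and on the right with $V_2^{[-1]}$ (both legitimate in the group $\LP$) gives
\[
  (x+1)\circ W \;=\; W\circ(x+1), \qquad W := V_1\circ V_2^{[-1]} \in \LP .
\]
Reading the left-hand side as $W+1$ and the right-hand side as $W(x+1)$, this says exactly $W(x+1) = W+1$, so Proposition~\ref{commutex1} applies and yields $W = x+c$ for some $c\in\R$. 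Then $V_1 = W\circ V_2 = (x+c)\circ V_2 = V_2 + c$, which is the assertion.

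I expect no serious obstacle here; the corollary is essentially a repackaging of Proposition~\ref{commutex1}. The only points requiring a moment's care are that $W$ genuinely lies in $\LP$ (needed so that Proposition~\ref{commutex1} is applicable), which holds because $\LP$ is a group under composition, and that the bookkeeping of which side one composes on is carried out consistently when rearranging the conjugacy identities. Note that existence of a solution $V$ is not at issue for this corollary — it is established separately, in the well-based case, by Theorem~\ref{thm:abel}; here we only assert that any such $V$ is determined up to an additive constant.
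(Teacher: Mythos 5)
Your proof is correct and is essentially the paper's own argument: both express $T$ as a conjugate of $x+1$ by each solution, rearrange to show that the composite $W = V_1\circ V_2^{[-1]}$ commutes with $x+1$, and then invoke Proposition~\ref{commutex1} to get $W = x+c$, hence $V_1 = V_2 + c$. No issues.
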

\begin{proof}
Suppose $V\circ T = V+1$ and $U\circ T = U+1$.
Then $V^{[-1]}\circ(x+1)\circ V = U^{[-1]}\circ(x+1)\circ U$
and $(U\circ V^{[-1]})\circ(x+1) = (x+1)\circ(U\circ V^{[-1]})$.
By Proposition~\ref{commutex1} there is $c \in \T$
with $U\circ V^{[-1]} = x+c$ so that $U = V+c$.
\end{proof}

\begin{no}\label{defiter}
Let $T \in \LP$, $s \in \R$.  If $T>x$, define
$T^{[s]} = V^{[-1]}\circ(x+s)\circ V$, where $V$ is a solution
of Abel's Equation $V(T) = V+1$.  If $T<x$, define
$T^{[s]} = V^{[-1]}\circ(x-s)\circ V$, where $V$ is a solution
of Abel's Equation $V(T) = V-1$.  The transseries
$T^{[s]}$ is independent of the choice of solution $V$.
\end{no}

Note: Even if $T$ is grid-based, it could happen that
$T^{[s]}$ is not.

\begin{pr}\label{power}
Let $A,B \in \LP$, $B \ne x$.  If $A\circ B = B \circ A$, then there
is $s \in \R$ with $B^{[s]} = A$.
\end{pr}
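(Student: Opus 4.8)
The plan is to conjugate $B$ to the translation $x+1$ (or $x-1$) by a solution of Abel's Equation, which turns the hypothesis $A\circ B=B\circ A$ into commutation with a translation — a situation already completely resolved by Proposition~\ref{commutex1}.

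Since the conclusion mentions $B^{[s]}$, by Notation~\ref{defiter} there is $V\in\LP$ solving Abel's Equation for $B$ (in the well-based case this is available as soon as $\expo B=0$, by Theorem~\ref{thm:abel}); fix such a $V$. As $\T$ is ordered and $B\ne x$, either $B>x$, where $V\circ B=V+1$, or $B<x$, where $V\circ B=V-1$; in either case $V\circ B\circ V^{[-1]}=x+1$, resp. $x-1$. Put $A':=V\circ A\circ V^{[-1]}$; since $\LP$ is a group under composition, $A'\in\LP$.

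Next I would transport $A\circ B=B\circ A$ through the conjugation: composing on the left by $V$ and on the right by $V^{[-1]}$ gives $A'\circ\big(V\circ B\circ V^{[-1]}\big)=\big(V\circ B\circ V^{[-1]}\big)\circ A'$. In the case $B>x$ this says $A'\circ(x+1)=(x+1)\circ A'$, i.e. $A'(x+1)=A'(x)+1$, so by Proposition~\ref{commutex1} there is $c\in\R$ with $A'=x+c$; hence $A=V^{[-1]}\circ(x+c)\circ V=B^{[c]}$ by Notation~\ref{defiter}, and $s=c$ works. In the case $B<x$ the relation says $A'$ commutes with $x-1$, hence with its compositional inverse $x+1$, so again $A'(x+1)=A'(x)+1$ and $A'=x+c$; then $A=V^{[-1]}\circ(x+c)\circ V=B^{[-c]}$, and $s=-c$ works.

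The points that need care are only the verification that $A'$ indeed lies in $\LP$ (immediate from the group structure) and the sign bookkeeping relating $x\pm1$ to $B^{[s]}$ in the two cases. There is no genuine obstacle: the whole content of the argument is the reduction to Proposition~\ref{commutex1}, and the one thing that must be \emph{stated} rather than computed is that the claim presupposes $B^{[s]}$ has a meaning, i.e. that Abel's Equation for $B$ is solvable.
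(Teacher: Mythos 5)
Your proposal is correct and is essentially the paper's own argument: conjugate by a solution $V$ of Abel's Equation for $B$, transport the commutation relation to commutation with $x+1$ (resp.\ $x-1$), and invoke Proposition~\ref{commutex1} to get $V\circ A\circ V^{[-1]}=x+c$, hence $A=B^{[s]}$. The paper only writes out the case $B>x$ and declares $B<x$ similar, so your explicit sign bookkeeping (and the remark that solvability of Abel's Equation is presupposed) adds nothing beyond what the paper intends.
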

\begin{proof}
We do the case $B>x$; the case $B<x$ is similar.
Let $V \in \LP$ solve Abel's Equation
for $B$, so that $B^{[s]} = V^{[-1]}\circ(x+s)\circ V$ for
$s \in \R$.  Then $V^{[-1]}\circ(x+1)\circ V \circ A =
A \circ V^{[-1]}\circ(x+1)\circ V$.  Compose with $V$ on
the left and $V^{[-1]}$ on the right to get
$(x+1)\circ (V\circ A \circ V^{[-1]}) =
(V \circ A \circ V^{[-1]})\circ (x+1)$.
By Proposition~\ref{commutex1}, there is $s \in \R$
with $V\circ A \circ V^{[-1]} = x+s$.  So
$A = V^{[-1]}\circ(x+s)\circ V = B^{[s]}$.

If $A,B$ are grid-based, perhaps $B^{[s]}$ is in general
not grid-based.  But since we conclude $B^{[s]} = A$, then
at least for this particular $s$ it happens to be
grid-based.
\end{proof}

\begin{ex}
Let $\theta \takes \R \to \R$ satisfy $\theta(1)=1$ and
$\theta(s+t) = \theta(s)+\theta(t)$ for all $s,t$.
By the Axiom of Choice, there is
such a map $\theta$ other than the identity function
$\theta(s) = s$.  (This strange $\theta$ is everywhere
discontinuous, non-measurable, unbounded on every interval.)
Let $T \in \LP$.
Then $\Phi(s,x) = T^{[\theta(s)]}$ is a real
iteration group for $T$.
\end{ex}

Here is a way to rule out such strange cases.

\begin{pr}
Let $T \in \LP$, $T > x$, and let $\Phi(s,x)$ be a real iteration
group for $T$.  Assume $\Phi(s,x)>x$ for all $s>0$.
Then $\Phi(s,x) = T^{[s]}$ as in~\ref{defiter}.
\end{pr}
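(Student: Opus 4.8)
The plan is to conjugate the problem by the Abel solution so that $T$ becomes the translation $x+1$, then to show the only real iteration group for $x+1$ that is ``increasing'' is $s\mapsto x+s$, and finally to invoke Proposition~\ref{commutex1}. Pick $V\in\LP$ with $V\circ T = V+1$, as in Notation~\ref{defiter}, and set $W_s := V\circ\Phi(s,\cdot)\circ V^{[-1]}$ for $s\in\R$. Since $\LP$ is a group under composition, each $W_s\in\LP$, and the iteration-group identities transport to
\begin{equation*}
	W_{s+t} = W_s\circ W_t,\qquad W_0 = x,\qquad W_1 = V\circ T\circ V^{[-1]} = x+1 .
\end{equation*}
In particular $W_s\circ(x+1) = W_s\circ W_1 = W_{1+s} = W_1\circ W_s = W_s+1$, so $W_s(x+1) = W_s(x)+1$; by Proposition~\ref{commutex1} there is $c(s)\in\R$ with $W_s = x+c(s)$.

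Next I would extract the arithmetic of $c$. From $W_{s+t}=W_s\circ W_t$ we get $x+c(s+t) = (x+c(s))\circ(x+c(t)) = x+c(s)+c(t)$, so $c$ is additive, and $W_1=x+1$ gives $c(1)=1$. To finish I need the hypothesis: $\Phi(s,x)>x$ for $s>0$ should force $W_s>x$, that is $c(s)>0$, for $s>0$. Granting that, $c$ is additive and positive on the positive reals, hence strictly increasing (for $s<t$, $c(t)-c(s)=c(t-s)>0$); being $\Q$-linear with $c(1)=1$ it satisfies $c(q)=q$ for $q\in\Q$, and monotonicity then squeezes $c(s)$ between $c(q_1)=q_1$ and $c(q_2)=q_2$ for rationals $q_1\le s\le q_2$, so $c(s)=s$ for all $s$. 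Hence $W_s=x+s$, so $V\circ\Phi(s,\cdot)\circ V^{[-1]}=x+s$ and $\Phi(s,\cdot)=V^{[-1]}\circ(x+s)\circ V=T^{[s]}$.

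The step needing care — and the main obstacle — is that conjugation by $V$ preserves ``${}>x$''. Write $P:=\Phi(s,x)-x$, so $P>0$ in the ordered field $\T$ when $s>0$. Since $V\in\LP$ has $V'>0$ (a large positive transseries has positive derivative, \cite[\Cposderiv]{edgarc}), the transseries mean value theorem \cite[\Cmvtiii]{edgarc} gives $V(x+P)-V(x)=P\cdot(V'\circ S)$ for some $S\in\LP$, and $V'\circ S>0$ because composition with an element of $\LP$ preserves positivity; hence $V(x+P)-V(x)>0$. Then
\begin{equation*}
	W_s - x = \big(V\circ\Phi(s,\cdot)-V\big)\circ V^{[-1]}
	= \big(V(x+P)-V(x)\big)\circ V^{[-1]} > 0 ,
\end{equation*}
again because composing on the right by $V^{[-1]}\in\LP$ preserves positivity. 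This yields $c(s)>0$ for $s>0$, completing the argument.

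An alternative route avoids the explicit conjugation: for $s\ne 0$ the order hypothesis forces $\Phi(s,\cdot)\ne x$ (otherwise $\Phi(s_0,\cdot)=x$ for some $s_0\ne0$, whence $\Phi(|s_0|,x)=x$, contradicting the hypothesis), and $\Phi(s,\cdot)$ commutes with $T=\Phi(1,\cdot)$, so Proposition~\ref{power} gives $\theta(s)\in\R$ with $\Phi(s,\cdot)=T^{[\theta(s)]}$; since $T^{[a]}\circ T^{[b]}=T^{[a+b]}$ and $t\mapsto T^{[t]}$ is injective, $\theta$ is a well-defined additive function with $\theta(1)=1$, and the same order computation as above (now reading ``$T^{[t]}>x$ iff $t>0$'') forces $\theta$ monotone, hence $\theta(s)=s$ and $\Phi(s,\cdot)=T^{[s]}$.
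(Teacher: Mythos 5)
Your proof is correct, and your main route is organized somewhat differently from the paper's, whose proof is essentially your ``alternative route'': the paper uses the order hypothesis to get strict monotonicity in $s$, deduces $\Phi(s,x)=T^{[s]}$ for rational $s$, then for irrational $s$ notes that $\Phi(s,\cdot)$ commutes with $T=\Phi(1,\cdot)$, invokes Proposition~\ref{power} to write $\Phi(s,\cdot)=T^{[t]}$, and pins down $t=s$ by squeezing between rational iterates. Your main route instead conjugates the whole iteration group at once by an Abel solution $V$, identifies each $W_s=V\circ\Phi(s,\cdot)\circ V^{[-1]}$ as a translation $x+c(s)$ directly from Proposition~\ref{commutex1}, and finishes with the Cauchy functional equation together with positivity of $c$ on $(0,\infty)$. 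The ingredients are ultimately the same (Abel conjugation, Proposition~\ref{commutex1}, the order hypothesis), since Proposition~\ref{power} is itself proved by exactly this conjugation; what your organization buys is that the rational case needs no separate treatment and the only arithmetic input is that an additive function, positive on the positive reals and normalized by $c(1)=1$, is the identity. You are also more explicit than the paper on a point it leaves implicit: that conjugation by $V$ preserves the relation ${}>x$, which you justify via the mean value theorem of \cite{edgarc} and $V'>0$; one could equally just cite that left and right composition by elements of $\LP$ preserve the ordering of $\T$, which is also what the paper's squeeze step tacitly uses when it asserts $t\mapsto T^{[t]}$ is strictly increasing.
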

\begin{proof}
Since $\Phi(s,x) > x$ for $s>0$, we may deduce that $s_1<s_1$ implies
$\Phi(s_1,x)<\Phi(s_2,x)$.  Also $\Phi(1,x) = T$,
so we may deduce $\Phi(s,x) = T^{[s]}$ for all rational $s$.
Fix an irrational $s$.
Since $\Phi(s,x)\circ\Phi(1,x) = \Phi(s+1,x)$, we know
that $\Phi(s,x)$ commutes with $T$, so by
Proposition~\ref{power}, $\Phi(s,x) = T^{[t]}$
for some $t$.  But the only $t$ satisfying
$T^{[s_1]} < T^{[t]} < T^{[s_2]}$ for all rationals
$s_1,s_2$ with $s_1 < s < s_2$ is $t=s$ itself.
\end{proof}

Similarly:
Let $T \in \LP$, $T < x$, and let $\Phi(s,x)$ be a real iteration
group for $T$.  Assume $\Phi(s,x)<x$ for all $s>0$.
Then $\Phi(s,x) = T^{[s]}$ as in~\ref{defiter}.

\section{Julia Example}\label{sec:julia}
As an example we will consider fractional iterates for
the function $M(x) = x^2+c$ near $x=+\infty$.  Of course,
integer iterates of this function are used for construction
of Julia sets or the Mandelbrot set.  For the theory of
real transseries to be applicable, we must restrict to real
values $c$.  But once we have nice formulas, they can
be investigated for general complex $c$.
In the case $c=-2$ there is a closed form known,
$M^{[s]} = 2\cosh(2^s\acosh(x/2))$.
[Of course, $x^2-2 = 2\cosh(2 \acosh(x/2))$
is essentially the
double-angle formula for cosines.]  And of course
in the case $c=0$ the closed form is $M^{[s]} = x^{2^s}$.
For other values of $c$, no closed form is known, and
it is likely that there is none (but that must be
explained).

So, let $c$ be a fixed real number, and write
$M(x) = x^2+c$.  Use ratio set
$\bmu = \{\mu_0,\mu_1,\mu_2,\mu_3\}$,
\begin{align*}
	\mu_0 &= \frac{1}{\log x},\quad
	\mu_1 = x^{-1},\quad
	\mu_2 = e^{-x},\quad
	\mu_3 = e^{-e^x} .
\end{align*}

Begin with $M(x) = x^2+c$.  Then
\begin{equation*}
	M_1 := \log\circ M \circ \exp = \log\big(e^{2x}+c\big)
	= \log\big(e^{2x}(1+ce^{-2x})\big)
	= 2x - \sum_{j=1}^\infty \frac{(-1)^jc^j\mu_2^{2j}}{j} .
\end{equation*}
The series in powers of $\mu_2$.  Next,
\begin{align*}
	M_2 := \log\circ M_1 \circ \exp
	= \log\left(2e^x-\sum_{j=1}^\infty\frac{(-1)^j c^j\mu_3^{2j}}{j}\right)
	= \log\left(2e^x\left(1-\sum_{j=1}^\infty
	\frac{(-1)^jc^{j}\mu_2\mu_3^{2j}}{2j}\right)\right) .
\end{align*}
Writing $A$ for the series (in powers of $\mu_2, \mu_3$),
\begin{align*}
	M_2 = x + \log 2 - \sum_{j=1}^\infty\frac{A^j}{j}
	= x + \log 2 +\frac{c}{2}\,\mu_2\mu_3^2 - \frac{c^2}{4}\,\mu_2\mu_3^4
	-\frac{c^2}{8}\,\mu_2^2\mu_3^4 + \O(\mu_2\mu_3^6).
\end{align*}
The $\O$ term represents $\mu_2\mu_3^6$ times
a series in $\mu_2, \mu_3$ with nonnegative exponents.

Note that $M_2$ is \emph{deep} in the sense of Definition~\ref{de:deep},
so the iterates will be computed using Abel's Equation.  The
solution $V$ of the Abel equation
$$
	V \circ M_2 = V + \log 2
$$
is found by iteration $V_0 = x$,
$V_{n+1} = V_n\circ M_2 - \log 2$.  The result is
\begin{equation*}
	V = x +\frac{c}{2}\,\mu_2\mu_3^2+\frac{c-c^2}{4}\,\mu_2\mu_3^4
	- \frac{c^2}{8}\,\mu_2^2\mu_3^4 -\frac{c^2}{2}\,\mu_2\mu_3^6
	-\frac{c^2}{8}\,\mu_2^2\mu_3^6 + \O(\mu_2\mu_3^8) .
\end{equation*}
The $\O$ is a series in $\mu_2,\mu_3$.  For $c=-2$, closed form is:
$$
	V = \log\acosh\frac{1}{2}\,e^{e^x} .
$$
The inverse is computed as in \cite[\Cinversei]{edgarc}:
\begin{align*}
	V^{[-1]} &= x -\frac{c}{2}\, \mu_2\mu_3^2
	+\frac{c-c^2}{4}\,\mu_2\mu_3^4
	-\frac{c^2}{8}\,\mu_2^2\mu_3^4
	+ \frac{c^3-3c^2}{6}\,\mu_2\mu_3^6
	\\ &\qquad
	+ \frac{c^3-c^2}{8}\mu_2^2\mu_3^6
	+ \frac{c^3}{24}\mu_2^3\mu_3^6
	+ \O(\mu_2\mu_3^8) .
\end{align*}
The iteration group is then a computation; for any real $s$,
\begin{equation*}
	M_2^{[s]} = V^{[-1]}\big(V(x) + s \log 2\big) .
\end{equation*}
For a fixed $s$, augment our ratio set with
$$
	\mu_4 = x^{-2^s},\quad
	\mu_5 = e^{-2^sx},\quad
	\mu_6 = e^{-2^se^x} .
$$
Then
\begin{align*}
	M_2^{[s]} = x + s\log 2
	+\frac{c}{2}\,\mu_2\mu_3^2
	- 2^{-1-s}c\mu_2\mu_6^2
	+\frac{c-c^2}{4}\,\mu_2\mu_3^4
	-\frac{c^2}{8}\,\mu_2^2\mu_3^4 &
	\\
	+ \frac{c^2}{2}\, \mu_2\mu_3^2\mu_6^2
	+ 2^{-2-s} c^2\mu_2^2\mu_3^2\mu_6^2
	+\O(\mu_2\mu_3^6+\mu_2\mu_6^4) &.
\end{align*}
Which of the two terms in the $\O$ is larger depends on the value of $s$.
The relative sizes of the terms shown also depend on the value of $s$.
If $s>0$, then $\mu_3 \fgt \mu_6$, so
$M_2^{[s]} = x + s\log 2 +(c/2)\mu_2\mu_3^2 + \cdots$.  If $s<0$, then
$\mu_3 \fst \mu_6$, so
$M_2^{[s]} = x + s\log 2 -2^{-1-s}c \mu_2\mu_6^2 + \cdots$.

Continue:
\begin{align*}
	M_1^{[s]} = \exp\circ M_2^{[s]}\circ\log
	= 2^s x
	+ 2^{-1+s}c \mu_2^2
	-\frac{c}{2}\, \mu_5^2
	+2^{-2+s}(c-c^2)\mu_2^4
	+ 2^{-1+s}c^2\mu_2^2\mu_5^2 &
	\\ 
	+ \big(2^{-2+s}(c^2-c^3) - 2^{-2+2s}c^3\,\big) \mu_2^4\mu_5^2
	-\frac{c^2+c}{4}\mu_5^4
	+ \O(\mu_2^6+\mu_5^6) &.
\end{align*}
This is a series in $\mu_1, \mu_2, \mu_5$.  The coefficients
involve rational numbers and powers of $2^s$.  I do not
know if $\mu_1$ actually appears: up to this point,
all terms with $\mu_1$ cancel.  Next,
\begin{align*}
	M^{[s]} = \exp\circ\; M_1^{[s]}\circ\log =
	x^{2^s}\Big(
	1 + 2^{-1+s}c\mu_1^2 -\frac{c}{2}\,\mu_4^2
	+\big(2^{-2+s}(c-c^2) + 2^{-3+2s}c^2\big)\mu_1^4 &
	\\ 
	+2^{-2+s}c^2 \mu_1^2\mu_4^2 + \O(\mu_1^6+\mu_4^4)
	\Big) &.
\end{align*}
This is a series in $\mu_0, \mu_1, \mu_4$, but I do not know if
$\mu_0 = 1/\log x$ actually appears.
The relative size of the terms depends on the value of $s$.

Let us substitute a few example values of $s$ into this transseries:
\begin{align*}
	M^{[1]} &= x^2\big(1+cx^{-2}+\O(x^{-6})\big) = x^2+c+\O(x^{-4})
	\\
	M^{[-1]} &= x^{1/2}\left(1-\frac{c}{2}\,x^{-1}+\O(x^{-2})\right) =
	x^{1/2}-\frac{c}{2}\,x^{-1/2} +\O(x^{-3/2})
	\\
	M^{[1/2]} &= x^{\sqrt{2}}
	+2^{-1/2}c x^{\sqrt{2}-2}
	-\frac{c}{2}\,x^{-\sqrt{2}}
	+\left(2^{-3/2}(c-c^2)+\frac{c^2}{4}\right)x^{\sqrt{2}-4}
	\\ &\qquad\qquad
	+2^{-3/2}c^2x^{-2-\sqrt{2}}
	+\O\left(x^{-3\sqrt{2}}\right) .
\end{align*}
In case $c=-2$ we have:
\begin{equation*}
	M^{[1/2]} = x^{\sqrt{2}}
	- \sqrt{2}\,x^{\sqrt{2}-2} + x^{-\sqrt{2}}
	+\left(1-\frac{3}{\sqrt{2}}\right)x^{\sqrt{2}-4}
	+ \sqrt{2}\,x^{-2-\sqrt{2}}
	+\O\left(x^{-3\sqrt{2}}\right) ,
\end{equation*}
which does match the transseries for the closed form
$$
	M^{[1/2]} = 2\cosh\left(\sqrt{2}\,\acosh\frac{x}{2}\right) .
$$

\begin{figure*}[htb] 
   \centering
   \includegraphics[width=4.65in]{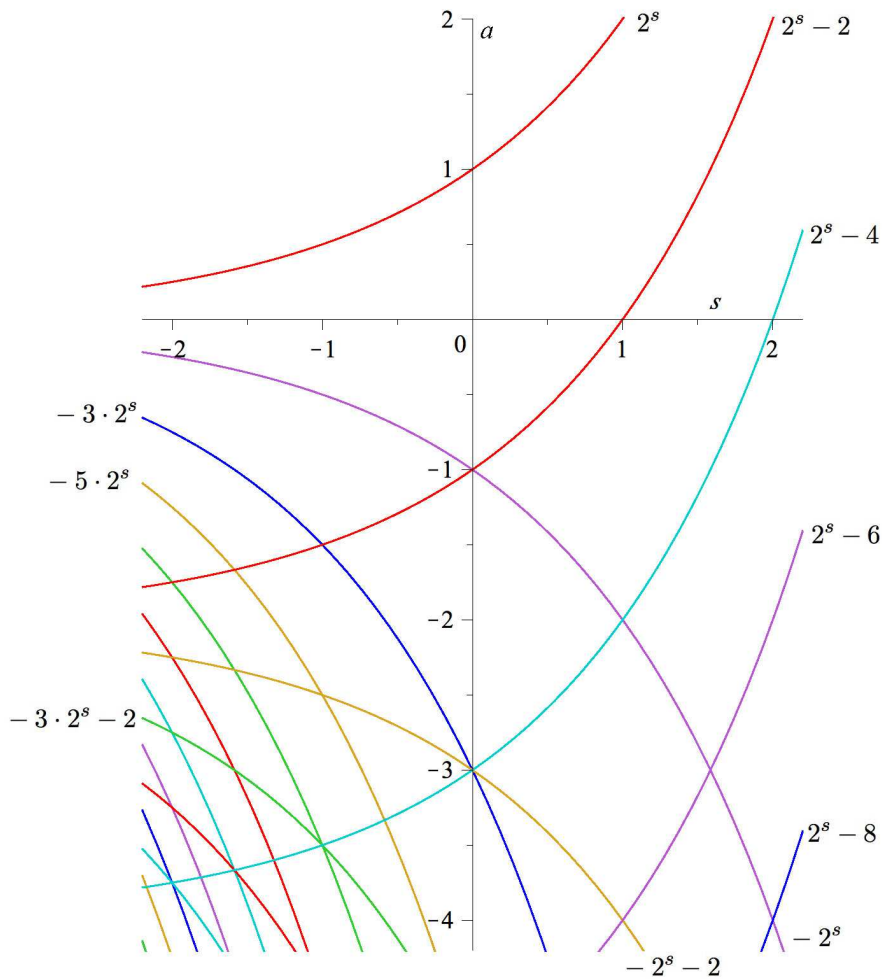} 
   \caption{$M^{[s]}$ supported by monomials $x^a$}
   \label{fig:supp}
\end{figure*}

\noindent
Figure~\ref{fig:supp} illustrates the the support of $M^{[s]}$
depending on $s$.  The support of $M^{[s]}$
consists of certain monomials
of the form $x^a$, where points $(s,a)$ are shown in
the figure.  I have assumed that logarithmic factors are,
indeed, missing.  Or perhaps we could say: any monomials with
logarithmic factors differ only infinitesimally from the terms
shown, so even if they do exist,
they make no difference in the picture.


\begin{thebibliography}{99}

\bibitem{asch}
M. Aschenbrenner, L. van den Dries,
``Asymptotic differential algebra.''
In~\cite{proc}, pp.~49--85

\bibitem{baker}
I.~N. Baker, ``Zusammensetzungen ganzer Funktionen.''
\emph{Math. Z.} \textbf{69} (1958) 121--163

\bibitem{cayley}
A.~Cayley, ``On some numerical expansions.''
\emph{Quarterly Journal of Pure and Applied mathematics}
\textbf{3} (1860) 366--369.  Also in:
\emph{Collected Works} vol.~IV, pp.~470--472

\bibitem{cohn}
P.~M. Cohn, \textit{Universal Algebra.}
Harper \&~Row, New York, 1965

\bibitem{costintop}
O. Costin, ``Topological construction of transseries and introduction to generalized Borel summability.''
In~\cite{proc}, pp.~137--175

\bibitem{costinglobal}
O. Costin, ``Global reconstruction of analytic functions
from local expansions and a new general method of converting
sums into integrals.'' preprint, 2007.\hfill\break
\texttt{http://arxiv.org/abs/math/0612121}

\bibitem{proc}
O. Costin, M. D. Kruskal, A. Macintyre (eds.),
\emph{Analyzable Functions and Applications}
(\emph{Contemp. Math.} \textbf{373}).
Amer. Math. Soc., Providence RI, 2005

\bibitem{DMM}
L. van den Dries, A. Macintyre, D. Marker,
``Logarithmic-exponential series.''
\emph{Annals of Pure and Applied Logic} \textbf{111} (2001) 61--113

\bibitem{edgar}
G.~Edgar,
``Transseries for beginners.'' \emph{Real Analysis
Exchange} (to appear).\hfill\break
\texttt{http://arxiv.org/abs/0801.4877} or\hfill\break
\texttt{http://www.math.ohio-state.edu/\hbox{$\sim$}edgar/preprints/trans\_begin/}

\bibitem{edgarc}
G.~Edgar
``Transseries: composition, recursion, and convergence.'' forthcoming
\hfill\break
\texttt{http://arxiv.org/abs/0909.1259v1} or\hfill\break
\texttt{http://www.math.ohio-state.edu/\hbox{$\sim$}edgar/preprints/trans\_compo/}

\bibitem{edgarw}
G.~Edgar,
``Transseries: ratios, grids, and witnesses.'' forthcoming\hfill\break
\texttt{http://arxiv.org/abs/0909.2430v1} or\hfill\break
\texttt{http://www.math.ohio-state.edu/\hbox{$\sim$}edgar/preprints/trans\_wit/}

\bibitem{edgartet}
G.~Edgar,
``Tetration in transseries.'' forthcoming

\bibitem{erdos}
P.~Erd\"os, E.~Jabotinsky,
``On analytic iteration.''
\textit{J. Analyse Math.} \textbf{8} (1960) 361--376

\bibitem{hoevenop}
J. van der Hoeven,
``Operators on generalized power series.''
\emph{Illinois J. Math.} \textbf{45} (2001) 1161--1190

\bibitem{hoeven}
J. van der Hoeven,
\emph{Transseries and Real Differential Algebra}
(\emph{Lecture Notes in Mathematics} \textbf{1888}).
Springer, New York, 2006

\bibitem{hoevenpre}
J. van der Hoeven, ``Transserial Hardy fields.'' preprint, 2006

\bibitem{korkine}
A. Korkine, ``Sur un probl\`eme d'interpolation.''
\emph{Bulletin des Sciences Math\'e\-ma\-tiques et Astromomiques}
(2) \textbf{6} (1882) 228--242

\bibitem{kuhlmann}
S. Kuhlmann, \emph{Ordered Exponential Fields.}
American Mathematical Society, Providence, RI, 2000

\bibitem{wilf}
H. F. Wilf, \emph{generatingfunctionology.}
Academic Press, Boston, 1990.\hfill\break
\texttt{http://www.math.upenn.edu/\~{}wilf/DownldGF.html}

\end{thebibliography}
\end{document}